\begin{document}

\newcommand{\limit}{\mathop{\rm \rightarrow}\limits}
\newcommand{\weaklimit}{\mathop{\rm \Rightarrow}\limits}
\newcommand{\equal}{\mathop{\rm =}\limits}
\newcommand{\lessequal}{\mathop{\rm \leq}\limits}
\newcommand{\greaterequal}{\mathop{\rm \geq}\limits}

\title{Fluid Limits for an~ALOHA-type Model \\ with Impatient Customers \thanks{The research of M. Frolkova and B. Zwart is supported by an NWO VIDI grant.}}

\author{Maria Frolkova \and Sergey Foss \and Bert Zwart}
\authorrunning{M. Frolkova, S. Foss, B. Zwart}

\institute{
M. Frolkova 
\at CWI, P.O. Box 94079, 1098 XG Amsterdam, The Netherlands \\
\email{M.Frolkova@cwi.nl}
\and 
S. Foss
\at Heriot-Watt University, Department of Actuarial Mathematics and Statistics, EH14 4AS Edinburgh, UK, and Institute of Mathematics, 
Novosibirsk, Russia \\  
\email{s.foss@hw.ac.uk}
\and
B. Zwart
\at CWI, EURANDOM, VU University Amsterdam, and Georgia Institute of Technology \\
\email{Bert.Zwart@cwi.nl}
}

\date{Received: April 10, 2010 / Accepted: February 10, 2012}

\maketitle

\begin{abstract}
Random multiple-access protocols of type ALOHA are used to regulate networks with a star configuration where client nodes talk to the hub node at the same frequency (finding a wide range of applications among telecommunication systems, including mobile telephone networks and WiFi networks). Such protocols control who talks at what time sharing the common idea ``try to send your data and, if your message collides with another transmission, try resending later''.

In the present paper, we consider a time-slotted ALOHA model where users are allowed to renege before
transmission completion. We focus on the scenario that leads to overload in the absence of impatience. Under mild assumptions,
we show that the fluid (or law-of-large-numbers) limit of the system workload coincides
a.s. with the unique solution to a~certain integral equation. We also demonstrate that the fluid limits
for distinct initial conditions converge to the same value as time
tends to infinity.

\keywords{ALOHA protocol \and Queues with
impatience \and Queues in overload \and Fluid limits}
\subclass{60K25 \and 60F17 \and 90B15 \and 90B22}
\end{abstract}

\section{Introduction}
ALOHA-type algorithms are intended to govern star networks in
which multiple client machines send data packets to the hub
machine at the same frequency. Thus, collisions of packets being
transmitted simultaneously are possible (clients know nothing about each other's intentions to transmit data and can not prevent collisions). Such algorithms
assume the following acknowledgment mechanism. If data has been received correctly at
the hub, which is possible only if no collisions occurred during
its transmission, then a~short acknowledgment packet is sent to
the client. If a~client has not received an~acknowledgment after a
short wait time, then it retransmits the packet after waiting a
randomly selected time interval with distribution specified by the ALOHA protocol that governs the network.

The pioneering ALOHA computer network, also known as the ALOHAnet,
was developed at the university of Hawaii under the leadership of
Norman Abramson (see~\cite{Abramson}, where Abramson first proposed
the ALOHA multi-access algorithm). The goal was to use low-cost
commercial radio equipment to connect users on Oahu and the other
Hawaiian islands with the central computer on the main Oahu campus.
The ALOHAnet became operational in~1971, providing the first
demonstration of a~wireless data network. Nowadays the ALOHA
random access techniques are widely used in WiFi, mobile telephone
networks and satellite communications.

To give an~example, we describe the conventional centralised
time-slotted ALOHA model. Here ``slotted time'' means that users
enter the system and abandon it, initiate and finish transmissions
at times $n=1,2,\ldots$. The arrival process forms an~i.i.d.
sequence $\{\xi(n); \ n \geq 1\}$; all service times are assumed to
equal $1$. ``Centralised model'' means that the total number of
users in the system is always known. Let $W(n)$ denote the total
number of users at time $n$. For any $n$, at the beginning of the
$n$-th time slot, which is the time interval $[n,n+1)$, each of the
$W(n)$ customers present in the system starts his transmission
with probability $p(n)$ (and does not with probability $1-p(n)$) independently of the others. If two or
more users attempt transmissions simultaneously, then the
transmissions collide, and hence fail, causing the users to remain in the
system in order to retransmit later. After a~successful
transmission the user leaves immediately. Note that, for any
time slot, given there are $m$ customers each starting his
transmission with probability $p$, the probability of a~successful
transmission equals $mp(1-p)^{m-1}$ and is maximised by $p=1/m$.
So we assume that $p(n)=1/W(n)$. In such a~setting, the population process $\{ W(n); \ n \geq 0 \}$ forms a~Markov chain that is positive recurrent if $\vec{E}\xi(1)<e^{-1}$ (the system is stable) and transient if
$\vec{E}\xi(1)>e^{-1}$ (the system is unstable). Stability conditions for other ALOHA-type models can be found in~\cite{survey,Hajek,Mikhailov}.

In the present paper, we extend the framework described above by
allowing impatience of users. Each user knows how long he can stay
in the system and abandons the system as soon as he succeeds to
transmit his packet or his patience time expires, depending on what
happens earlier. To distinguish between different levels of patience we
assume multiple customer classes. We also assume that the input process is non-trivial, i.e. $\vec{P}\{\xi(1) \geq 2 \}>0$, and that
$\vec{E}\xi(1)>e^{-1}$. The latter condition would mean the overload
regime if not for impatience of customers. 

Abandonment before service completion is a~very natural behavior in the case when waiting time exceeds some threshold because dwelling in the system
means spending resources, and whatever one can imagine them to be:
time itself, money, etc., there is always a~limited amount of them
at disposal. As for overload causing long waiting times, a~system
may experience it even if it is not expected to, due to
fluctuations of the actual traffic from the designed traffic.
These are the reasons of our interest in combination of the
overload regime and impatience of customers.

The results of the paper concern fluid limits for the workload
process, i.e. weak limits that arise under a~law-of-large-numbers scaling. We propose a~fluid analogue of the
stochastic model and, under mild assumptions, show that the
family of the scaled workload processes is relatively compact with weak
limit points a.s. satisfying the fluid model equation.
Besides the last argument, we provide an~independent analytical
proof of existence of a~fluid model solution for any initial state,
and show that it is unique. We also demonstrate that there exists
a unique equilibrium point for the fluid model equation, and that
all fluid limits stabilise to this point as time tends to
infinity.

One of possible generalisations of the model treated in the
present paper is to allow interference of transmissions only if
the distance between the corresponding client machines is small,
and this is a~subject of our future research. Such an~extension of
the network topology was proposed by Bordenave, Foss and Shneer in~\cite{BFS}, where impatience of customers is not taken into account,
though. The authors study fluid limits in order to find out
whether the stochastic model is stable or not. The description of
fluid limits for our model with impatience omitted coincides with
the description of fluid limits in~\cite{BFS} adapted to the
network topology we consider here.

The paper is organised as follows. In Section~\ref{sec:stoch_model}, we present a
detailed description of the stochastic model. In Section~\ref{sec:fluid_model}, we
define the fluid model solutions and formulate their properties. In Section~\ref{sec:fluid_limits}, we state our main result
concerning convergence of sequences of the fluid scaled workload
processes to fluid model solutions. Sections~\ref{sec:proof_fms_exist_unique},~\ref{sec:proof_equilibrium_fms} and~\ref{sec:proof_fluid_limits} contain
the proofs of the results stated in Sections~\ref{sec:fluid_model} and~\ref{sec:fluid_limits}. Namely, in
Section~\ref{sec:proof_fms_exist_unique}, we establish existence and uniqueness of a~fluid model
solution for any initial state. In Section~\ref{sec:proof_equilibrium_fms}, existence, uniqueness and global asymptotic
stability of an~equilibrium fluid model solution are shown.
In Section~\ref{sec:proof_fluid_limits}, we prove the result of Section~\ref{sec:fluid_limits}. In the remainder of this section, we list the notation we use
throughout the paper.

\paragraph{Notation} The standard sets are denoted as follows: the real line $\mathbb{R}=(-\infty,\infty)$, the non-negative half-line $\mathbb{R}_+=[0,\infty)$,
the non-negative integers $\mathbb{Z}_+=\{0,1,2,\ldots\}$. All
vectors in the paper are $K$-dimensional with integer or real coordinates. The coordinates of a vector are denoted by the same symbol as the vector with lower indices $1, \ldots, K$ added. If a~vector has superscripts, overlining, or tilde sign, they
remain in its coordinates. For example, $\overline{x}^{\,0} \in \mathbb{R}_+^K$, $\overline{x}^{\,0}=(\overline{x}^{\,0}_1,\ldots,\overline{x}^{\,0}_K).$ The supremum and $L_1$-norm in $\mathbb{R}^K$ are $\|x\|=\max_{1 \leq i \leq K}|x_i|$ and
$\|x\|_{1}=\sum_{i=1}^K |x_i|$, respectively. Vector inequalities hold coordinate-wise. The coordinate-wise product of vectors is $x \ast y= (x_1y_1,...,x_Ky_K)$.

The signs $\weaklimit$, $\equal^\text{d}$ and $\leq_{\text{st}}$ stand for weak convergence, equality in distribution and stochastic order, respectively. Recall
that, for real-valued r.v.'s $X$ and $Y$, $X {\leq_{\text{st}}} Y$ if ${\vec{P}\{ X>t \}} \leq {\vec{P}\{
Y>t \}}$ for any $t \in \mathbb{R}$. For an~r.v. $X$ having a~distribution $G$, we write $X \sim G$. The notations $\upPi(a)$, with $a\in(0,\infty)$, and
$B(m,x)$, with $m \in \mathbb{Z}_+$, $x
\in [0,1]$, are used for the Poisson distribution with parameter~$a$ and binomial distribution with parameters $m$, $x$. With $B(0,x)$ we mean the degenerate
distribution localised at~$0$.

The complement of an~event $E$ is denoted by
$\overline{E}$.

For a~function $f \colon S \to \mathbb{R}^K$, its supremum norm is $\| f \|_S = \max_{1 \leq i \leq K} \sup_{x \in
S} |f_i(x)|$. Let $\mathcal{D}$ be the space of right-continuous functions $f \colon [0,\infty) \to \mathbb{R}^K$ with finite left
limits. Endow this space with the Skorokhod $J_1$-topology. Let
$\mathcal{G}$ be the class of continuous functions $g \colon [0,\infty) \to \mathbb{R}^K_+$
such that $\| g(t) \|>0$ for all $t\neq0$.

We use the signs $:=$ and $=:$ to introduce a~definition in the body
of a~formula.

\section{Stochastic model description}\label{sec:stoch_model} 

This section contains a~detailed description of the stochastic
model under study. In particular, it derives the dynamic equation for the system workload. All stochastic primitives introduced here are defined on a common probability space $\left( \upOmega, \mathcal{F}, \vec{P} \right)$ with expectation operator $\vec{E}$. The characteristic function of an event is denoted by $\vec{I}$.

\paragraph{Stochastic assumptions and the protocol} We consider an~ALOHA-type service system with impatient customers.
The system includes a~waiting room where customers arrive to, and
a~server. Time is slotted, i.e. arrivals and abandonments may
occur only at time instants $n=1,2,\ldots$. Time slot~$n$ is the
time interval $[n,n+1)$. We assume that there are $K<\infty$
classes of customers.

The arrival process is denoted by $\left\{ A(n); \  n \geq 1
\right\}$, where $A(n)=\left(A_1(n),\ldots,A_K(n)\right)$
and $A_i(n)$ is the number of class~$i$ customers arriving at time~$n$. We suppose that $\left\{ A(n); \  n \geq 1 \right\}$ is
an i.i.d. sequence and that $\vec{E}A(1) =: \upLambda=\left( \upLambda_1,\ldots,\upLambda_K
\right) \in (0,\infty)^K$.  The coordinates of the vectors $A(n)$ are
allowed to be dependent.

Let $W_i(n)$ be the number of class~$i$ customers present in
the system at time $n \geq 0$. As can be seen from the further description, $W_i(n)$ coincides with the workload at time~$n$ due
to class~$i$ customers. Hence, $\left\{ W(n) = \left(W_1(n),\ldots,W_K(n)\right); \ n \geq 0
\right\}$ denotes the workload process.

Each customer brings a~packet that takes exactly a~single time slot to be transmitted to the server. He also sets a~deadline for transmission: once the
deadline expires, the customer leaves the system even if his
packet has not been transmitted yet. In this case, we say that the
customer has abandoned the system due to impatience. Patience times of class~$i$
customers have geometrical distribution with parameter
$p_i$ and take values greater than or equal to~$1$. By~$p$ we
denote the~vector having parameters $p_i$ as its coordinates,
$p=\left( p_1,\ldots,p_K \right)$. Patience times of different
customers are mutually independent.

We now describe how a~transmission occurs. At the beginning of
time slot~$n$, each customer, independently of the others, starts transmission with probability
$1 / \|W(n)\|_1$ (and does not with probability $1 - 1 / \|W(n)\|_1$). If there is one customer transmitting, then
the transmission is going to be successful. Otherwise a~collision
happens. At the end of the time slot, customers learn the result.
If a~customer has succeeded to send his packet, he immediately
leaves the system. If a~customer has failed and he is from class~$i$, then with probability~$p_i$ he leaves due to impatience, and
with probability~$1-p_i$ he stays in the system to try to retransmit his packet later.

Throughout the paper we
assume the following.

\begin{assumption}\label{ass:nontrivial_arrivals}
The input process is non-trivial, i.e. $\vec{P}\{ \|A(1)\|_1 \geq 2 \}>0$.
\end{assumption}

\begin{assumption}\label{ass:overload}
The mean amount of work arriving to the system per time slot
exceeds the stability threshold for the model with no impatience
of customers, i.e. $\| \upLambda \|_1 > e^{-1}.$
\end{assumption}

\begin{remark}
The results of the paper can be generalised onto the case
when patience times are not simply geometrical random variables but
finite mixtures of those. It suffices, for all $i  = 1,\ldots,K$, to split customer class~$i$ into $k_i$ new classes, where $k_i$ is the number of mixture
components in the patience time distribution for class~$i$ customers.
\end{remark}

\paragraph{Workload dynamics} The sequence $\left\{ W(n);\ n\geq0 \right\}$
forms a~time-homogeneous Markov Chain. Its dynamics can be
described as follows: given a~history $\left\{ W(m); \
m=0,\ldots,n \right\}$ up to time $n$ with
$W(n)=x$, we have, for $i=1,\ldots,K$,
\begin{equation}\label{eq:one_step_mc}
W_i(n+1) \equal^{\text{d}} x_i + A_i - T_i(x) - I_i(x),
\end{equation}
where
\begin{itemize}
\item[$\bullet$] $A  =\left(A_1,\ldots,A_K\right) \equal^{\text{d}} A(1)$;
\item[$\bullet$] $T_i(x)$ represents the number of class~$i$
customers who are present in the system at time~$n$ but will leave
at the end of time slot~$n$ because of a~successful transmission,
\[
T_i(x) = \mathbb{I}\left\{ B_i\left( x \right) = 1\right\}
\prod\limits_{j \neq i} \vec{I}\left\{ B_j\left(
x \right) = 0\right\},
\]
where $B_i({x}) \sim B\left(x_i, 1/ \| {x} \|_1\right)$ if ${x} \neq (0,\ldots,0)$ and $B_i(0,\ldots,0)=0$;
\item[$\bullet$] $I_i({x})$ represents the number of class~$i$
customers who are present in the system at time~$n$ but will leave
at the end of time slot~$n$ due to impatience rather than a
successful transmission; given
${x} - {T}({x}) = {y}$,
\[
I_i({x}) = \widetilde{B}_i({y}) \sim B\left( y_i,p_i \right);
\]
\item[$\bullet$] for any ${x},\,{y} \in \mathbb{Z}^K_{+}$, the
random vector ${A}$ and the binomial r.v.'s
$B_i\left({x}\right)$,
$\widetilde{B}_j\left( {y} \right)$, $i,j = 1,\ldots,K$, are
mutually independent.
\end{itemize}

\begin{remark}\label{rem:total_transmissions} The number $T_i({x})$ of
successful transmissions by class~$i$ customers at a~time slot may
take only values $0$ and $1$. Moreover, $\|{T}({x}) \|_1 \leq 1$.
\end{remark}

\section{Fluid model}\label{sec:fluid_model}
In the present section, we define a~fluid analogue of the
stochastic model described in the previous section. As time and space are appropriately normalised, we expect that the difference
equation~\eqref{eq:one_step_mc} can be approximated by a~differential equation where the rate of increase is due to arrival rates, and the rate of decrease due to service completions and abandonments. In the single class case, one may expect such a~differential equation to look like (we omit the class index) $z'(t) = \upLambda - e^{-1} - p
z(t)$, since the throughput of ALOHA is~$e^{-1}$. In the multiclass case, this naturally extends to $z'_i(t) = \upLambda_i - e^{-1} z_i(t) / \left(\sum_{j=1}^K
z_j(t)\right) - p_i z_i(t)$, $i = 1,\ldots,K$. This will be made rigorous in Section~\ref{sec:fluid_limits}. We now proceed more formally.

\begin{definition}\label{def:fluid_model}
For ${z}^0 \in \mathbb{R}^K_+$, a~solution to the integral equation
\begin{equation}\label{eq:int_fluid_model}
{z}(t) = {z}^0 + t {\upLambda} - e^{-1}\!\!
\int_0^t \!\! {m}({z}(s)) ds - {p
\ast} \!\! \int_0^t \!\! {z}(s)ds,
\end{equation}
that comes from $\mathcal{G}$, i.e. that is continuous, non-negative and such that, for all $t \neq 0$, $\|{z}(t)\|>0$, is
called a~\textit{fluid model solution (FMS) with initial state
${z}^0$}. The function ${m} \colon \mathbb{R}^K_+ \to \mathbb{R}^K_+$ is given by
$$
{m}({x}) = \left\{
\begin{array}{rl}
{x} / \|{x}\|_1& \text{if } {x} \neq
(0,\ldots,0),\\
{\upLambda} / \|{\upLambda}\|_1& \text{if } {x} =
(0,\ldots,0).\\
\end{array}
\right.
$$
\end{definition}

\begin{remark}\label{rem:dif_fluid_model}
For a~function from $\mathcal{G}$, equation~\eqref{eq:int_fluid_model} is equivalent to the Cauchy problem
\begin{align}
{z}'(t) &= {\upLambda} - {p \ast z}(t)-e^{-1}{m}({z}(t)),  \label{eq:dif_fluid_model}\\
{z}(0) &= {z}^0, \nonumber
\end{align}
where \eqref{eq:dif_fluid_model} holds for $t \geq 0$ if ${z}^0 \neq (0,\ldots,0)$, and for $t > 0$ if ${z}^0 = (0,\ldots,0)$.
\end{remark}

\begin{remark} \label{rem:derivative_at_zero} Although ${m}(0,\ldots,0)$ does not appear in \eqref{eq:dif_fluid_model}, it needs to be defined for
further use in Section 7. We assign to ${m}(0,\ldots,0)$ the value of ${\upLambda} / \|{\upLambda}\|_1$, which is the limit of
${m}({\cdot})$ along FMS's trajectories as they approach $(0,\ldots,0)$. Indeed, the only point where a~fluid model
solution can take the value of $(0,\ldots,0)$ is $t = 0$. Let ${z}(\cdot)$ be
an FMS starting from ${z}(0) = (0,\ldots,0)$. For the moment suppose that
${z}(\cdot)$ is continuously differentiable at $t = 0$. Then~\eqref{eq:dif_fluid_model} and Taylor's expansion give, for small
$t > 0$,
\begin{equation}\label{eq:taylor}
{z}'(t) = {\upLambda} - {p \ast
z}(t) - e^{-1}\dfrac{t {z}'(0) + {o_K}(t)}{\sum_{i=1}^K
z'_i(0)t + o(t)},
\end{equation}
where $o(t) \in \mathbb{R}$, ${o_K}(t) \in \mathbb{R}^K$, and
$o(t)/t \to 0$, ${o_K}(t)/t \to (0,\ldots,0)$ as $t \to 0$. The
continuity of~${z}'(\cdot)$ at $t = 0$ and Assumption~\ref{ass:overload} guarantee that $\sum_{i=1}^K z'_i(0) > 0$,
so we pass to the limit as $t \to 0$ on both sides of~\eqref{eq:taylor} and get ${z}'(0)={\upLambda} - e^{-1}{z}'(0) / \left({\sum_{i=1}^K
z'_i(0)}\right)$. The last equation has a~unique solution
${z}'(0) = \left( 1-e^{-1} / \|{\upLambda} \|_1 \right) {\upLambda}$, which implies existence
of the limit $\lim_{t \to 0} {m}({z}(t))=
{\upLambda}/\| {\upLambda} \|_1$. Later on (see Section~\ref{sec:proof_fms_exist_unique}, Property~\ref{prty:derivative_at_zero}), we prove that, for
any function
${z}(\cdot)$ from $\mathcal{G}$ starting from
${z}(0) = (0,\ldots,0)$ and solving~(\ref{eq:dif_fluid_model}), there exists the derivative ${z}'(0) = \left( 1-e^{-1} / \|{\upLambda} \|_1
\right) {\upLambda}$, and hence
exists the limit $\lim_{t \to 0} {m}({z}(t))=
{\upLambda}/\| {\upLambda} \|_1$.
\end{remark}

In the remainder of the section, we discuss properties of FMS's.

\paragraph{Existence and uniqueness of FMS's} follow by the classical results from the theory of ordinary
differential equations if  the initial state is non-zero. Otherwise the proof is rather involved. The outline follows below; see Section~\ref{sec:proof_fms_exist_unique} for the full proof.

\begin{theorem}\label{th:fms_exist_unique}
For any initial state, a~fluid model solution exists and is
unique.
\end{theorem}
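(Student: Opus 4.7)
The plan is to split on whether the initial state is zero. If $z^0 \neq (0,\ldots,0)$, then on the open set $\mathbb{R}^K_+ \setminus \{0\}$ the vector field $F(x) := \upLambda - p \ast x - e^{-1} m(x)$ is locally Lipschitz, so the classical Picard--Lindel\"of theorem gives a unique maximal solution to \eqref{eq:dif_fluid_model}. I would extend it to all of $[0,\infty)$ via two a priori bounds: the coordinatewise inequality $z_i'(t) \leq \upLambda_i$ yields the linear upper bound $z_i(t) \leq z_i^0 + \upLambda_i t$, ruling out blow-up; and summing \eqref{eq:dif_fluid_model} over $i$ (using $\|m(z)\|_1 = 1$) gives
\begin{equation*}
\frac{d}{dt}\|z(t)\|_1 = \|\upLambda\|_1 - e^{-1} - \sum_{i=1}^K p_i z_i(t),
\end{equation*}
which, by Assumption~\ref{ass:overload}, is strictly positive whenever $\|z(t)\|_1$ is sufficiently small. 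Hence $\|z(t)\|_1$ cannot approach $0$, the trajectory stays in the Lipschitz region, and the solution extends to $[0,\infty)$ as an element of $\mathcal{G}$.

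For existence in the case $z^0 = (0,\ldots,0)$, I would approximate. Fix $\varepsilon_n \downarrow 0$ and let $z^{(n)}$ be the unique FMS from the previous step with initial state $\varepsilon_n \upLambda$. The linear upper bound and the uniform bound $|z_i^{(n)}{}'(t)| \leq \upLambda_i + e^{-1} + p_i(\varepsilon_n \upLambda_i + \upLambda_i t)$ make $\{z^{(n)}\}$ equicontinuous and pointwise bounded on each $[0,T]$, so Arzel\`a--Ascoli yields a subsequential limit $z$. Passing to the limit in \eqref{eq:int_fluid_model} is immediate for the linear terms; for $\int_0^t m(z^{(n)}(s))ds$, the uniform lower bound $\|z^{(n)}(s)\|_1 \geq cs$ (obtained by applying the ODE argument of Remark~\ref{rem:derivative_at_zero} to each $z^{(n)}$) inherits to $z$ and justifies dominated convergence of $m(z^{(n)}(s)) \to m(z(s))$ pointwise on $(0,t]$. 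The inherited lower bound also places $z$ in $\mathcal{G}$.

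Uniqueness when $z^0 = (0,\ldots,0)$ is the main obstacle, since $F$ is singular at the origin. My strategy has three steps: (i) prove that any FMS from $(0,\ldots,0)$ admits the right derivative $z'(0) = v := (1 - e^{-1}/\|\upLambda\|_1)\upLambda$, i.e.\ the content of Property~\ref{prty:derivative_at_zero}, by the Taylor/limit calculation sketched in Remark~\ref{rem:derivative_at_zero} but made rigorous for merely continuous $z \in \mathcal{G}$ (one combines the integral equation with a monotone argument on $\|z(t)\|_1/t$ to force convergence to $\|v\|_1$ and, using $m(z)$ along the trajectory, convergence of $z(t)/t$ to $v$); (ii) derive from (i) the lower bound $\|z(t)\|_1 \geq ct$ for small $t > 0$; (iii) given two FMS's $z^{(1)}, z^{(2)}$ from $(0,\ldots,0)$, compare them on $[\delta, T]$ via a Gronwall estimate and let $\delta \downarrow 0$.

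The delicate point in step (iii) is that the Lipschitz constant of $m$ on $\{x : \|x\|_1 \geq c\delta\}$ is of order $1/(c\delta)$, so a naive Gronwall yields $\|z^{(1)}(T) - z^{(2)}(T)\| \leq \|z^{(1)}(\delta) - z^{(2)}(\delta)\|\,(T/\delta)^{C/c}$, which is not obviously controlled by $\|z^{(1)}(\delta) - z^{(2)}(\delta)\| = o(\delta)$. I would circumvent this by passing to the rescaled variable $u(t) := z(t)/t$, which by (i) has the continuous extension $u(0) = v$ and satisfies, for $t > 0$, a non-singular ODE obtained by differentiating $z(t) = t u(t)$ and dividing by $t$; applied to $u$, standard Picard--Lindel\"of uniqueness on any interval $(0,T]$ with the common endpoint condition $u(0^+) = v$ then forces $z^{(1)} \equiv z^{(2)}$. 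Alternatively, one can exploit the refined structure $Dm(x) = (I - m(x)\mathbf{1}^\top)/\|x\|_1$, whose action on differences tangent to level sets is much smaller than $1/\|x\|_1$, to sharpen the Gronwall bound enough to close the estimate directly.
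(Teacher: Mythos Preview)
Your treatment of the case $z^0\neq(0,\ldots,0)$ is essentially the paper's (the paper uses Peano plus Gronwall together with a priori coordinate bounds from a monotonicity lemma, but the content is the same), and your Arzel\`a--Ascoli construction for existence at $z^0=(0,\ldots,0)$ is a reasonable alternative to the paper's monotone-iteration scheme.

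The gap is in your uniqueness argument at $z^0=(0,\ldots,0)$. The substitution $u(t)=z(t)/t$ does \emph{not} yield a non-singular ODE: since $m$ is $0$-homogeneous, differentiating $z=tu$ and dividing by $t$ gives
\[
u'(t)=\frac{1}{t}\bigl(\upLambda-u(t)-e^{-1}m(u(t))\bigr)-p\ast u(t),
\]
which still carries the coefficient $1/t$. All you gain is that the bracket vanishes at the limiting value $u=v$, so you are left with a Briot--Bouquet type singular initial-value problem $t\,u'=G(u)+O(t)$, $G(v)=0$. Uniqueness for such problems is \emph{not} ``standard Picard--Lindel\"of''; it depends on the spectrum of $DG(v)=-I-e^{-1}Dm(v)$. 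Here the eigenvalues are $-1$ and $-\|\upLambda\|_1/(\|\upLambda\|_1-e^{-1})$, which is indeed the favourable sign, but turning this into a proof requires a weighted fixed-point or nonlinear variation-of-constants argument, not the one-line appeal you make. Your fallback (``exploit the refined structure of $Dm$'') is too vague to close the estimate you yourself identified as problematic.

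The paper bypasses the singular-ODE analysis entirely. It rewrites the problem in the integral form~\eqref{eq:equivalent_description_fluid_model}, embeds it into a two-parameter family $z=F^{(\varepsilon,a)}(z)$ (an additive floor $\varepsilon$ and an impatience vector $a$), and exploits two structural facts: when all $a_i$ coincide the solution is explicit, yielding sharp sandwich bounds $z^{(0,a^{\text{u}})}\le z^{(0,a)}\le z^{(0,a^{\text{l}})}$; and monotonicity in $\varepsilon$ produces the quantitative estimate $\|z^{\varepsilon}(t)-z^{0}(t)\|_1\le \varepsilon\bigl(K+\|a\|_1 t+\varphi_\varepsilon(t)\bigr)$ with $\varphi_\varepsilon(t)=\int_0^t Ke^{-1}/\|z^\varepsilon(s)\|_1\,ds$. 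The explicit lower sandwich then shows $\varepsilon\varphi_\varepsilon(t)\to 0$, so any two $(0,p)$-solutions are squeezed against the common $z^\varepsilon$ and must coincide.
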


\textit{One-dimensional case.} Equation~\eqref{eq:int_fluid_model} turns into (we omit the class index) $ z(t) = z^0 + \left(\upLambda - e^{-1}\right)t
- p \int_0^t z(s) ds, $ and its unique solution is given by $z(t) = z^0
e^{-p\,t} + \left( (\upLambda - e^{-1})/{p} \right) \left( 1 - e^{-p\,t}
\right)$.

\textit{Multidimensional case, non-zero initial state.} Uniqueness follows easily by the Gronwall inequality (see e.g.~\cite[Chapter~III,
Paragraph~1]{Hartman}).

\begin{proposition}[Gronwall
inequality] \label{gronwall} Suppose that functions $u(\cdot)$ and $v(\cdot)$ are
non-negative and continuous in $[a,b]$, and that there exists a
constant $C \geq 0$ such that $ v(t)\leq C + \int_a^t v(s)u(s) ds$,
$a \leq t \leq b. $ Then $ v(t) \leq C \exp{\int_a^t u(s) ds}$, $a
\leq t \leq b. $ In particular, if $C = 0$, then $v(t) \equiv 0$.
\end{proposition}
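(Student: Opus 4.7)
The plan is to treat the case $C > 0$ by a logarithmic–derivative (equivalently, integrating–factor) trick applied to the right–hand side of the integral inequality, and then deduce the $C = 0$ case by letting a positive constant tend to zero.

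First suppose $C > 0$. Set $V(t) = C + \int_a^t v(s) u(s) ds$, so that $V$ is continuously differentiable on $[a,b]$ with $V(a) = C$ and $V'(t) = v(t) u(t)$. By hypothesis $v(t) \leq V(t)$, and since $u \geq 0$ one has $V'(t) \leq u(t) V(t)$. Because $V(t) \geq C > 0$ throughout $[a,b]$, we may divide and recognise a logarithmic derivative,
\begin{equation*}
\frac{d}{dt} \ln V(t) \;=\; \frac{V'(t)}{V(t)} \;\leq\; u(t).
\end{equation*}
Integrating from $a$ to $t$ and exponentiating yields $V(t) \leq C \exp \int_a^t u(s)\, ds$, and since $v(t) \leq V(t)$ the desired bound follows. (Equivalently, one may multiply the differential inequality $V'(t) - u(t) V(t) \leq 0$ by the integrating factor $\exp(-\int_a^t u(s)\,ds)$ and integrate; the two derivations are the same computation written in different notation.)

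For the case $C = 0$, note that the hypothesis $v(t) \leq \int_a^t v(s) u(s)\, ds$ implies $v(t) \leq \varepsilon + \int_a^t v(s) u(s)\, ds$ for every $\varepsilon > 0$. Applying the first part with $C = \varepsilon$ gives $v(t) \leq \varepsilon \exp \int_a^t u(s)\,ds$ on $[a,b]$; letting $\varepsilon \downarrow 0$ forces $v(t) \leq 0$, and combined with non-negativity of $v$ this yields $v \equiv 0$.

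The only slightly delicate point is the justification of dividing by $V(t)$, which is why one insists $V(a) = C > 0$; this is precisely what fails if one tries to apply the argument directly when $C = 0$, and motivates the $\varepsilon$-approximation in the final step. Everything else is routine: $V$ is differentiable because $v$ and $u$ are continuous, and the monotonicity $v \leq V$ holds because $u \geq 0$ makes $V$ non-decreasing above $v$ through the integral inequality.
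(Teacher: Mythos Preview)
Your proof is correct and is the standard integrating-factor argument for Gronwall's inequality. The paper, however, does not prove this proposition at all: it is quoted as a classical result with a reference to Hartman's \emph{Ordinary Differential Equations} (Chapter~III, \S1), so there is no proof in the paper to compare against.
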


Since the first order partial derivatives of the function
${m}({\cdot})$ are bounded on all sets
\begin{equation*}
\mathbb{R}^K_{\delta} := \left\{ {x}  \in 
\mathbb{R}^K_+ \colon \|{x}\|  \geq 
\delta \right\}, \quad \delta > 0,
\end{equation*}
this function is Lipschitz continuous on
all such sets. Let $c(\delta)$ be a~Lipschitz constant for
${m}({\cdot})$ on $\mathbb{R}^K_{\delta}$ with
respect to the supremum norm, i.e.
$\|{m}({x}) - {m}({x})\| \leq c(\delta) \|{x}-{y}\|$ for all ${x},
{y} \in \mathbb{R}^K_\delta$.

Suppose that ${z}(\cdot)$ and
$\widetilde{{z}}(\cdot)$ are two FMS's with the same non-zero initial state. They are continuous and non-zero at every point, and hence, for each $T > 0$,
there exists a~$\delta(T) > 0$ such that
${z}(t),\widetilde{{z}}(t) \in
\mathbb{R}^K_{\delta(T)}$, $0 \leq t \leq T$. We put
$\triangle{z}(\cdot)={z}(\cdot)-\widetilde{{z}}(\cdot)$
and $\triangle{m}({z})=
{m}({z})-{m}(\widetilde{{z}})$. Then $
-\triangle{z}(t)= e^{-1} \int_0^t
\triangle{m}({z}(s)) ds + {p \ast} \int_0^t
\triangle{z}(s) ds$, and, for $t \in [0,T]$, we have $\|\triangle{z}
\|_{[0,t]} \leq \left(e^{-1} c(\delta(T)) + \|{p}\| \right)
\int_0^t \|\triangle{z}\|_{[0,s]}ds.$ By the last inequality and the Gronwall inequality, we have $\|\triangle {z}
\|_{[0,t]} \leq 0$, $0\leq t \leq T$. Since $T$ is arbitrary, ${z}({\cdot})$ and $\widetilde{{z}}({\cdot})$ coincide on
$[0,\infty)$.

Existence of an~FMS with a~non-zero initial state
can be shown by the Peano existence theorem (the proof is postponed to Section~\ref{sec:proof_fms_exist_unique}).

\textit{Multidimensional case, zero initial state.} The techniques used in the previous case do not work here because they are based on the continuity properties of the function ${m}({\cdot})$
that fail as zero comes into play. So a~different
approach is required. We introduce a~family of integral equations depending on parameters $(\varepsilon,a) \in \mathbb{R}_+ \times
\mathbb{R}^K_+$ that includes (for $(\varepsilon,a)=(0,{p})$) an~equation equivalent to the Cauchy problem~\eqref{eq:dif_fluid_model}
with ${z}^0=(0,\ldots,0)$. We show that each equation of this family has a~solution. If ${\varepsilon > 0}$, then uniqueness of an
$(\varepsilon,a)$-solution is straightforward to show. In order to prove uniqueness of a~$(0,a)$-solution, we derive a~proper
estimate for it via solutions with other parameters. The whole
idea of this proof is adopted from~\cite{bez}.

\paragraph{Equilibrium FMS} We now discuss existence, uniqueness and asymptotic stability of a constant solution to the fluid model equation~\eqref{eq:dif_fluid_model}.
\begin{definition}
If a~differential equation ${z}'(t) =
{f} \bigl(t,{z}(t)\bigr)$, $t \geq 0$,
${z},{f} \in \mathbb{R}^K$, admits a~constant solution, then this solution is called an~\textit{equilibrium point}.
\end{definition}

By means of a Lyapunov function, we prove the following result; see Section~\ref{sec:proof_equilibrium_fms}.

\begin{theorem}\label{th:equilibrium_fms}
Suppose Assumption \ref{ass:overload} holds. Then there exists a~unique equilibrium point for the fluid model equation~\eqref{eq:dif_fluid_model}, which is given by
\begin{equation}\label{eq:equilibrium_formula}
z^{\textup{e}}_i= \dfrac{\upLambda_i}{x+p_i}, \quad i=1,\ldots,K, \quad \text{where $x$ solves} \quad \sum\limits_{i=1}^K \dfrac{p_i\upLambda_i}{x + p_i}=\|
{\upLambda} \|_1 - e^{-1},
\end{equation}
and any fluid model solution ${z}(t)$ converges to
${z}^{\textup{e}}$ as $t \to \infty$.
\end{theorem}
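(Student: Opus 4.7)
The proof splits naturally into an algebraic step (existence and uniqueness of $z^{\textup{e}}$) and a dynamical step (global attraction).

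For the algebraic step, I set $z'(t) = 0$ in~\eqref{eq:dif_fluid_model}. A candidate with $z_i^{\textup{e}} = 0$ is ruled out immediately, since the $i$-th coordinate equation would force $\upLambda_i = 0$, contradicting $\upLambda \in (0,\infty)^K$; hence every equilibrium lies in $(0,\infty)^K$. Writing $x := e^{-1}/\|z^{\textup{e}}\|_1$, the equilibrium relation $\upLambda_i = p_i z_i^{\textup{e}} + x z_i^{\textup{e}}$ gives $z_i^{\textup{e}} = \upLambda_i/(x+p_i)$; summing over $i$ and using $x \|z^{\textup{e}}\|_1 = e^{-1}$ reduces the consistency condition to the scalar equation $\sum_i p_i \upLambda_i/(x+p_i) = \|\upLambda\|_1 - e^{-1}$. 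The left-hand side is continuous and strictly decreasing in $x \in (0,\infty)$, ranging from $\|\upLambda\|_1$ down to $0$; Assumption~\ref{ass:overload} places the right-hand side strictly inside this range, delivering a unique $x > 0$ and the claimed formula for $z^{\textup{e}}$.

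For the dynamical step my Lyapunov function of choice is
\[
V(z) = \sum_{i=1}^K z_i^{\textup{e}} \phi(z_i/z_i^{\textup{e}}), \qquad \phi(u) = u - 1 - \ln u,
\]
defined on $(0,\infty)^K$. The function $V$ is non-negative with $z^{\textup{e}}$ as its unique minimiser, and its sub-level sets are compact in $(0,\infty)^K$. Since $\partial V/\partial z_i = 1 - z_i^{\textup{e}}/z_i$, along a strictly positive FMS the chain rule and the substitution $\upLambda_i = p_i z_i^{\textup{e}} + e^{-1} z_i^{\textup{e}}/\|z^{\textup{e}}\|_1$ yield
\[
\dot V(z) = -\sum_{i=1}^K \frac{p_i(z_i-z_i^{\textup{e}})^2}{z_i} - \frac{e^{-1}}{\|z^{\textup{e}}\|_1}\left(\sum_{i=1}^K \frac{(z_i^{\textup{e}})^2}{z_i} - \frac{\|z^{\textup{e}}\|_1^2}{\|z\|_1}\right).
\]
The bracketed quantity is non-negative by the Cauchy--Schwarz inequality $(\sum_i z_i^{\textup{e}})^2 \leq (\sum_i (z_i^{\textup{e}})^2/z_i)(\sum_i z_i)$, and the first sum is manifestly $\leq 0$. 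Hence $\dot V \leq 0$, with equality forcing both a Cauchy--Schwarz collinearity $z = c\, z^{\textup{e}}$ and $z_i = z_i^{\textup{e}}$ on every index with $p_i > 0$, so that $z = z^{\textup{e}}$.

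It remains to upgrade $\dot V \leq 0$ to genuine convergence $z(t) \to z^{\textup{e}}$ despite $V$ being singular on the coordinate axes. This I handle by noting that~\eqref{eq:dif_fluid_model} gives $z_i'(t) = \upLambda_i > 0$ whenever $z_i(t) = 0$ and $\|z(t)\| > 0$, so every FMS enters and stays in $(0,\infty)^K$ for all $t > 0$; picking any such $t_0 > 0$ and restricting to $[t_0,\infty)$, the orbit lies in the compact forward-invariant sub-level set $\{V \leq V(z(t_0))\} \subset (0,\infty)^K$, and a standard LaSalle invariance argument produces convergence to the unique zero of $\dot V$. The main obstacle I anticipate is precisely this boundary bookkeeping --- ensuring that trajectories leave the axes and that $V$ is usable thereafter --- rather than the Lyapunov computation itself, which is essentially automatic once the correct $V$ is written down.
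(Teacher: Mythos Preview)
Your proof is correct. The algebraic step matches the paper's essentially verbatim. For global attraction, however, you and the paper work with different Lyapunov functions: the paper takes the quadratic $V(z) = \sum_i (z_i - z_i^{\textup{e}})^2/z_i^{\textup{e}}$ and shows $\dot V \leq 0$ by proving that the cross term $\sum_i y_i^2/z_i^{\textup{e}} - \bigl(\sum_i y_i\bigr)^2\big/\sum_i z_i^{\textup{e}}$ (with $y_i = z_i - z_i^{\textup{e}}$) is a positive semidefinite form via an explicit Lagrange reduction; you take the relative-entropy functional $V(z) = \sum_i z_i^{\textup{e}}\bigl(z_i/z_i^{\textup{e}} - 1 - \ln(z_i/z_i^{\textup{e}})\bigr)$ and handle the analogous cross term with a one-line Cauchy--Schwarz. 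Your choice buys something concrete: because your $V$ blows up on the boundary of $(0,\infty)^K$, its sub-level sets are already compact in the open orthant, so once the trajectory has entered $(0,\infty)^K$ the LaSalle argument is self-contained. The paper's quadratic $V$ remains finite on the coordinate axes, so forward invariance of the open orthant must be imported separately from the a~priori coordinate bounds of Section~\ref{sec:proof_fms_exist_unique} before the cited Lyapunov theorem applies. Both routes are standard, and the paper's Lagrange reduction is really Cauchy--Schwarz in another guise.
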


\section{Main theorem}\label{sec:fluid_limits}
In this section, we characterise the asymptotic behaviour under a~fluid scaling of the workload process of the stochastic model introduced in
Section~\ref{sec:stoch_model}, justifying the heuristics given in the previous section. First we specify the fluid scaling. Let $R$ be a~positive number.
Consider the stochastic model from Section~\ref{sec:stoch_model} with the impatience parameters~$p_i$ replaced by $p_i/R$, ${i \in \{1,\ldots,K\}}$. Denote the
workload process of the $R^{\text{th}}$ model by ${W}^R(\cdot)$, and scale it by $R$ both in space and time,
\begin{equation}\label{eq:scaling}
\overline{{W}}^R(t) := \dfrac{1}{R} {W}^R\left(\lfloor Rt \rfloor \right),
\quad t \geq 0.
\end{equation}
We refer to the processes~\eqref{eq:scaling} as the \textit{fluid scaled workload processes}. They take values in the Skorokhod space
$\mathcal{D}$.

\begin{remark}\label{rem:skorokhod}
If the limit is continuous (which is the case whenever we prove convergence in $\mathcal{D}$ in this paper), then convergence in the Skorokhod
$J_1$-topology is equivalent to uniform convergence on compact sets.
\end{remark}

Now we formulate the main theorem of the
paper and highlight the basic steps of the proof; the~detailed
proof will follow in Section~\ref{sec:proof_fluid_limits}.
\begin{theorem}\label{th:main}
Let Assumptions~\ref{ass:nontrivial_arrivals} and~\ref{ass:overload} hold. Then any sequence of the fluid
scaled workload processes $\overline{{W}}^R(\cdot)$ such
that $\overline{{W}}^R(0) \Rightarrow {z}^0 = const$, $R
\to \infty$, converges weakly in $\mathcal{D}$ to
the (unique by Theorem~\ref{th:fms_exist_unique}) FMS with initial state ${z}^0$.
\end{theorem}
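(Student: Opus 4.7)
The plan is to follow the standard three-step fluid-limit recipe: (i)~derive a drift-plus-martingale decomposition of $\overline{W}^R(\cdot)$ whose drift matches the right-hand side of~\eqref{eq:dif_fluid_model}; (ii)~establish $C$-tightness of $\{\overline{W}^R\}_R$ in~$\mathcal{D}$; (iii)~identify every weak subsequential limit as an element of~$\mathcal{G}$ solving~\eqref{eq:int_fluid_model} and invoke Theorem~\ref{th:fms_exist_unique} to lift subsequential convergence to convergence of the full family. Remark~\ref{rem:skorokhod} allows us to phrase the target weak convergence as uniform convergence on compact sets.

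For step~(i), iterating~\eqref{eq:one_step_mc} gives
\begin{equation*}
\overline{W}^R(t) = \overline{W}^R(0) + \frac{1}{R}\sum_{n=1}^{\lfloor Rt \rfloor} A(n) - \frac{1}{R}\sum_{n=0}^{\lfloor Rt \rfloor - 1} T(W^R(n)) - \frac{1}{R}\sum_{n=0}^{\lfloor Rt \rfloor - 1} I(W^R(n)).
\end{equation*}
The arrival sum converges a.s.\ uniformly on compacts to $t\upLambda$ by the functional LLN for i.i.d.\ sums, which also supplies the a~priori bound $\|\overline{W}^R\|_{[0,T]} \leq \|\overline{W}^R(0)\|_1 + R^{-1}\sum_{n \leq RT} \|A(n)\|_1$, tight in~$R$. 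For the other two sums I split each into its conditional-mean drift (given the natural filtration) plus a martingale. Section~\ref{sec:stoch_model} yields $\vec{E}[T_i(x)\mid W^R(n)=x] = (x_i/\|x\|_1)(1-1/\|x\|_1)^{\|x\|_1-1} = e^{-1} m_i(x) + O(1/\|x\|_1)$ and $\vec{E}[I_i(x)\mid W^R(n)=x] = (p_i/R)x_i + O(1/R)$, so on events where $\inf_{s\leq T}\|\overline{W}^R(s)\| > \delta$ the drift sum matches $e^{-1}\int_0^t m(\overline{W}^R(s))ds + p \ast \int_0^t \overline{W}^R(s)ds$ uniformly up to $o(1)$. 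The conditional per-slot variances of the martingale increments are $O(R^{-2})$ for $T$ and $O(\|W^R(n)\|_1 R^{-3})$ for $I$, so Doob's inequality combined with the a~priori bound on $\|\overline{W}^R\|$ forces the martingale part to vanish uniformly on compacts. Combined with the LLN-induced modulus of continuity of the arrival sum, this delivers $C$-tightness in step~(ii).

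The main obstacle is step~(iii), specifically verifying that any subsequential weak limit $z(\cdot)$ lies in~$\mathcal{G}$, a prerequisite for exploiting the only locally Lipschitz function $m(\cdot)$ when passing to the limit in the drift. When $z^0 \neq 0$ this is immediate by continuity and an iterative extension. When $z^0 = 0$ the difficulty is to show $\liminf_R \|\overline{W}^R(t)\|_1 > 0$ for every $t > 0$. My plan is first to argue that, over a short fluid-scale interval $[0,\eta]$, $\|W^R(\lfloor R\eta\rfloor)\|_1$ reaches order~$R$ with probability tending to~$1$, by exploiting Remark~\ref{rem:total_transmissions} (at most one transmission per slot) and Assumption~\ref{ass:nontrivial_arrivals} (non-trivial arrivals prevent trapping in small states); then, once the population is macroscopic, $(1-1/\|W^R\|_1)^{\|W^R\|_1-1} \approx e^{-1}$ and Assumption~\ref{ass:overload} furnishes a strictly positive drift $\|\upLambda\|_1 - e^{-1}$ for $\|\overline{W}^R\|_1$, matching the ODE computation of Remark~\ref{rem:derivative_at_zero}. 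Once $z \in \mathcal{G}$ is secured, Skorokhod representation combined with the local Lipschitz property of $m$ away from~$0$ allows passage to the limit in the drift and recovers~\eqref{eq:int_fluid_model}; Theorem~\ref{th:fms_exist_unique} then identifies~$z$ as the unique FMS with initial state~$z^0$, upgrading subsequential weak convergence to weak convergence of the full family.
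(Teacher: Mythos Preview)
Your proposal is correct and follows essentially the same route as the paper. The paper's Section~7.1 carries out exactly your drift-plus-martingale decomposition, writing $\overline{W}^R(t)$ as the right-hand side of~\eqref{eq:int_fluid_model} plus an error $G^R=\overline{M}^R+G^{1,R}+G^{2,R}+G^{3,R}$; Lemmas~\ref{lem:G2_to_zero} and~\ref{lem:G3_martingale_to_zero} then show $G^R\Rightarrow 0$, after which tightness and identification of the limit proceed as you outline, with the continuous-mapping theorem playing the role of your Skorokhod-representation step.

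Two points where the paper's execution is worth noting. First, your claim that the case $z^0\neq 0$ is ``immediate by continuity and an iterative extension'' underplays that passing to the limit in the $m(\cdot)$-integral requires the \emph{prelimit} processes $\overline{W}^R$ to be bounded away from zero on $[\delta,\Delta]$; the paper's Lemma~\ref{lem:G2_to_zero}(ii) establishes this uniformly in $z^0$ rather than treating the two cases separately, and this prelimit bound is what drives both $G^{2,R}\Rightarrow 0$ and the verification that limits lie in~$\mathcal{G}$. Second, the ``reach order~$R$ quickly, then positive drift'' argument you sketch for $z^0=0$ is exactly the content of Lemma~\ref{lem:G2_to_zero}(i)--(ii), but its implementation is the most technical part of the proof: the paper reduces to a single-class model by a pathwise coupling, then builds minorant Markov chains (using Assumption~\ref{ass:nontrivial_arrivals} to escape small states and a uniform Poisson-type bound on the binomial impatience term with $\gamma$ chosen small enough that impatience does not kill the drift $\|\upLambda\|_1-e^{-1}$) to obtain hitting-time and excursion estimates. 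Your sketch has the right ingredients; just be aware that this step absorbs most of the work.
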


The first step of the proof of Theorem~\ref{th:main} is to show that
the fluid scaled workload satisfies an~integral equation that differs
from the fluid model equation~\eqref{eq:int_fluid_model} by
negligible terms (for $R$ large enough). This appears to be the most difficult part of the proof. Then we obtain relative compactness of the family $\{ \overline{{W}}^R(\cdot); \ R > 0 \}$ and continuity of its weak limits by applying a~general result on relative compactness in $\mathcal{D}$. We also show
that the weak limits are bounded away from zero outside $t = 0$, which, together with their continuity, implies that they a.s. satisfy the fluid model
equation~\eqref{eq:int_fluid_model}, and hence all of them coincide a.s. with the unique FMS with initial state~${z}^0$.

\begin{remark}
In the literature on the fluid approximation approach, a~\textit{fluid limit} is any weak limit of the processes~\eqref{eq:scaling} along a~subsequence $R \to
\infty$. As it follows from Theorem~\ref{th:main}, for a~sequence of the processes~\eqref{eq:scaling} with $\overline{{W}}^R(0) \Rightarrow {z}^0 = const$, $R\to \infty$, there is a~unique fluid limit, and it is a~deterministic function, namely, the fluid model solution with initial state ${z}^0$.
\end{remark}

\section{Proof of Theorem~\ref{th:fms_exist_unique}} \label{sec:proof_fms_exist_unique}
We split the proof into two parts, for a~non-zero and zero initial state.

\subsection{Non-zero initial state}
Here we have to prove the existence result only, see Section~\ref{sec:fluid_model} for the proof of uniqueness. First we derive bounds for an~FMS using the
following lemma.
\begin{lemma}\label{lem:fms_bounds}
Let $S$ be either a~finite interval $[0,T]$ or the half-line
$[0,\infty)$, and let a~real-valued function $x(t)$ be continuous in
$S$ and differentiable in $S\setminus\{0\}$. Suppose that a~constant $C$ is such that $x(t) \geq C$ for $t \in S\setminus\{0\}$ implies $x'(t) \leq 0$. Then
$\sup_{t \in S} x(t) \leq \max\{x(0),C\}$.
\end{lemma}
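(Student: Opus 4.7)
My plan is a short proof by contradiction. I would set $M := \max\{x(0), C\}$ and suppose that some $t^* \in S$ satisfies $x(t^*) > M$. The natural object to introduce is the last instant before $t^*$ at which the function still lies at or below $M$, namely
\[
t_0 := \sup\{s \in [0, t^*] : x(s) \leq M\}.
\]
Since $x(0) \leq M$, this supremum is well defined and belongs to $[0, t^*)$. Continuity of $x$ on $S$ will give $x(t_0) = M$, and the very definition of $t_0$ will force $x(s) > M$ for all $s \in (t_0, t^*]$.

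Next I would invoke the hypothesis of the lemma. On $(t_0, t^*]$ we have $x(s) > M \geq C$, so the assumption yields $x'(s) \leq 0$ throughout this interval. Applying the mean value theorem to $x$ on $[a, t^*]$ for any $a \in (t_0, t^*)$ then produces $x(t^*) \leq x(a)$; letting $a \to t_0^+$ and using continuity of $x$ at $t_0$ gives $x(t^*) \leq x(t_0) = M$, contradicting $x(t^*) > M$. Since $t^*$ was arbitrary, $\sup_{t \in S} x(t) \leq M$, as required.

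There is essentially no obstacle here. The only minor point to handle with care is that $x$ is assumed differentiable only on $S \setminus \{0\}$, which is why I keep $a$ strictly larger than $t_0 \geq 0$ when invoking the mean value theorem and push $a \to t_0^+$ only at the end, using continuity rather than differentiability at the endpoint. The same argument works verbatim whether $S = [0,T]$ or $S = [0,\infty)$, since the whole manipulation takes place on the bounded interval $[0, t^*]$.
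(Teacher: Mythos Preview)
Your argument is correct and follows essentially the same idea as the paper's proof: both exploit the fact that once $x$ exceeds $C$ it cannot increase, and use continuity to rule out any overshoot of $\max\{x(0),C\}$. The paper packages this via an $\varepsilon$-band argument (showing $\sup x \le \max\{x(0),C\}+\varepsilon$ for every $\varepsilon>0$), while you give a direct contradiction using the last crossing time and the mean value theorem; your version is in fact a bit more carefully written, and your handling of the possible lack of differentiability at $0$ is correct.
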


\begin{proof} 
Let $\varepsilon > 0$. Suppose that $x(t) \in
\left(\max\left\{x(0), C \right\},\max\left\{x(0), C
\right\}+\varepsilon\right]$. Then, starting
from time $t$, the function $x(t)$ is decreasing at least until it
reaches level $C$. So $\sup_{t \in S} x(t) \leq
\max\left\{x(0), C \right\}+\varepsilon.$ Since
$\varepsilon
> 0$ is arbitrary, $\sup_{t \in S} x(t) \leq
\max\left\{x(0), C \right\}.$ \qed
\end{proof}

\paragraph{Bounds for a~fluid model solution} Let $S$ be either a~finite interval $[0,T]$ or the half-line $[0,\infty)$.
Suppose that a~function ${z}(\cdot)$ is non-negative with $\|{z}(\cdot)\|_1>0$ and continuous in $S$, and that it solves the
fluid model equation~\eqref{eq:dif_fluid_model} in
$S$. Then $\|{z}(t)\|_1=\sum_{i=1}^K
z_i(t)$ and the derivative $\|{z}(t)\|_1'$ exists for all
$t \in S$. Summing up the coordinates of equation~\eqref{eq:dif_fluid_model}, we get
\begin{equation*}
\|{\upLambda}\|_1 e^{-1}-p^{\ast}\|{z}(t)\|_1 \leq
\|{z}(t)\|_1' \leq
\|{\upLambda}\|_1-e^{-1}-p_{\ast}\|{z}(t)\|_1, \quad t
\in S,
\end{equation*}
where $p_{\ast}{=}\min_{1 \leq i \leq K} p_i$ and
$p^{\ast}{=}\max_{1 \leq i \leq K} p_i$. Then Lemma~\ref{lem:fms_bounds} applied to $x(\cdot)=\|{z}(\cdot)\|_1'$ and
$C=(\|{\upLambda}\|_1-e^{-1})/p_{\ast}$, and $x(\cdot)=-\|{z}(\cdot)\|_1'$ and $C=(\|{\upLambda}\|_1-e^{-1})/p^{\ast}$, implies that
\begin{alignat}{2}
	\sup\limits_{t \in S} \|{z}(t)\|_1 &\leq \max\left\{\|{z}(0)\|_1,
	\dfrac{\|{\upLambda}\|_1-e^{-1}}{p_{\ast}}\right\} & \, &=:u\left({z}(0)\right),
	\label{eq:upper_bound_norm} \displaybreak[0] \\
		\inf\limits_{t \in S} \|{z}(t)\|_1 &\geq \min\left\{\|{z}(0)\|_1,
		\dfrac{\|{\upLambda}\|_1-e^{-1}}{p^{\ast}}\right\} & &=:l\left({z}(0)\right).
		\label{eq:lower_bound_norm}
\end{alignat}

By Assumption~\ref{ass:overload}, we have
$u\left({z}(0)\right)>0$ for any non-negative ${z}(0)$,  and $l\left({z}(0)\right)>0$ for any non-negative and non-zero
${z}(0)$.

Inequality~\eqref{eq:upper_bound_norm} implies that $z_i'(t) \leq \upLambda_i -
e^{-1} u^{-1} \left( {z}(0) \right) z_i(t) - p_i z_i(t)
$, $i = 1,\ldots,K$. Then, by Lemma~\ref{lem:fms_bounds}, we get
\begin{equation}\label{eq:upper_bound_coordinates}
\sup\limits_{t \in S} {z}_i(t) \leq \max\left\{ z_i(0),
\dfrac{\upLambda_i}{e^{-1}u^{-1}\left({z}(0)\right)+p_i}
\right\}=:u_i\left({z}(0)\right).
\end{equation}

Similarly, if ${z}(0) \neq (0,\ldots,0)$, then inequality~\eqref{eq:lower_bound_norm} and Lemma~\ref{lem:fms_bounds} yield, for $i=1,\ldots,K$,
\begin{equation}\label{eq:lower_bound_coordinates}
\inf\limits_{t \in S} {z}_i(t) \geq \min\left\{ z_i(0),
\dfrac{\upLambda_i}{e^{-1}l^{-1}\left({z}(0)\right)+p_i}
\right\}=:l_i\left({z}(0)\right).
\end{equation}

\begin{remark}\label{rem:nonzero_outside_zero}
If ${z}(0) \neq (0,\ldots,0)$, then, by the bound~\eqref{eq:lower_bound_coordinates} and since
$z'_i(0)=\upLambda_i>0$ if ${z_i(0) = 0}$, we have
$$
\text{for all } \delta> 0, \quad \inf\limits_{t \in S, t \geq
\delta} \min\limits_{1 \leq i \leq K} {z}_i(t) > 0.
$$
\end{remark}

\paragraph{Existence of a~fluid model solution with a~non-zero
initial state} The key tool used in this proof is
the Peano existence theorem (see e.g.~\cite[Chapter~II,
Paragraph~2]{Hartman}).

\begin{proposition}[Peano] 
Suppose that a~function ${f} \colon \mathbb{R} \times \mathbb{R}^K \to
\mathbb{R}^K$ is continuous in the rectangle $B=\left\{
(t,{y}) \colon t_0 \leq t \leq t_0 + a,
\max_{1 \leq i \leq K} |y_i - y^0_i| \leq b \right\}.$ Let  $M \geq \|{f}\|_{\text{B}}$ and ${\alpha = \min\{a, b/M\}}$.
Then the Cauchy problem
\begin{align*}
{y}'(t) &= {f}\bigl(t,{y}(t)\bigr),\\
{y}(t_0) &= {y}^0
\end{align*}
has a~solution in the interval $[t_0, t_0 + \alpha]$ such that
$y(t) \in \text{B}$.
\end{proposition}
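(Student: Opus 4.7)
Since ${f}$ is only assumed continuous (not Lipschitz), Picard--Lindelöf contraction is unavailable, and the plan is to follow the classical Cauchy--Peano approach: explicitly construct approximate solutions as Euler polygons on a refining grid, extract a uniformly convergent subsequence via Arzelà--Ascoli, then identify the limit by passing to the limit under the integral sign.

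First, I would build the Euler polygons. For each integer $n\geq 1$, partition $[t_0, t_0+\alpha]$ into $n$ equal pieces at nodes $t_k^{(n)} = t_0 + k\alpha/n$, and define ${y}_n\colon [t_0,t_0+\alpha]\to\mathbb{R}^K$ as the piecewise linear function with ${y}_n(t_0)={y}^0$ and constant slope ${f}(t_k^{(n)},{y}_n(t_k^{(n)}))$ on each $[t_k^{(n)},t_{k+1}^{(n)}]$. Using $\|{f}\|_B \leq M$ together with $\alpha\leq b/M$, an induction on $k$ would show $(t,{y}_n(t))\in B$ throughout, so the recursive ${f}$-evaluations always lie in the domain of continuity and every slope has norm at most $M$. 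Consequently $\{{y}_n\}$ is uniformly $M$-Lipschitz (hence equicontinuous) and uniformly bounded, so Arzelà--Ascoli yields a subsequence $\{{y}_{n_j}\}$ converging uniformly on $[t_0,t_0+\alpha]$ to a continuous, $M$-Lipschitz limit~${y}$ with $(t,{y}(t))\in B$.

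Next, I would rewrite each polygon in integral form, ${y}_n(t)={y}^0+\int_{t_0}^t {g}_n(\tau)\,d\tau$, where ${g}_n$ is the step function taking value ${f}(t_k^{(n)},{y}_n(t_k^{(n)}))$ on $[t_k^{(n)},t_{k+1}^{(n)})$, and try to pass to the limit to obtain ${y}(t)={y}^0+\int_{t_0}^t {f}(\tau,{y}(\tau))\,d\tau$, from which ${y}'(t)={f}(t,{y}(t))$ follows by continuity of the integrand. The hard part is the uniform convergence ${g}_{n_j}(\tau)\to {f}(\tau,{y}(\tau))$: one must simultaneously control the grid-spacing error $|\tau-t_k^{(n_j)}|\leq \alpha/n_j$ and the path error bounded by $M\alpha/n_j + \|{y}_{n_j}-{y}\|_{[t_0,t_0+\alpha]}$, and the only leverage is that $B$ is compact and ${f}$ continuous on it. Uniform continuity of ${f}$ on $B$ is what converts these two vanishing errors into a vanishing oscillation of ${g}_{n_j}$; this is the crux of the argument. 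Losing the Lipschitz hypothesis on ${f}$ costs uniqueness (Peano delivers existence only), but continuity on the compact rectangle $B$ is precisely enough to preserve existence.
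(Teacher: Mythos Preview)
Your argument is the standard Cauchy--Peano proof via Euler polygons and Arzel\`a--Ascoli, and it is correct as written; the only point worth tightening is to state explicitly that the induction keeping $(t,{y}_n(t))\in B$ uses the sup-norm bound $\|{y}_n(t)-{y}^0\|\le M(t-t_0)\le M\alpha\le b$ at every intermediate $t$, not just at the nodes. Note, however, that the paper does not supply its own proof of this proposition: it is quoted as a classical result with a reference to Hartman's textbook, so there is no in-paper argument to compare against. Your approach is exactly the classical one found in such references.
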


First note that it suffices to show existence of a~non-negative solution to equation~\eqref{eq:dif_fluid_model}. By
Remark~\ref{rem:nonzero_outside_zero}, it will not hit zero at $t > 0$, and hence will be an~FMS.

Further note that it suffices to consider initial states with all coordinates positive. Indeed, if ${z}(0)
$ is non-zero, there exists a~rectangle
$B=\left\{ \max_{1 \leq i \leq K}|z_i - z_i(0)| \leq b \right\}$ that does not
contain zero, and, consequently, the mapping
${f}({\cdot}) = e^{-1} {m}({\cdot}) +
{p}$ is continuous in $B$. Let $M =
\|{f}\|_{B}$ and $\alpha = b/M$. By the Peano theorem, there exists a~solution to equation~\eqref{eq:dif_fluid_model} in the interval $[0, \alpha]$. If
$z_i(0) > 0$, then, by continuity of ${z}(\cdot)$, there exists a~$t_i
\leq \alpha$ such that $
z_i(t) \geq z_i(0)/2$, $t \leq t_i$. If $z_i(0) = 0$, then $z'_i(0) = \upLambda_i>0$, and there exists a
$t_i \leq \alpha$ such that
$z_i(t) = \upLambda_i t(1 + o(1))
\geq \upLambda_i t / 2$, $t \leq t_i$. Therefore, with $\beta = \min_{1 \leq i \leq K} t_i$, we have $\inf_{t \leq \beta} \|{z}(t)\|_1 > 0$ and $z_i(\beta)> 0$, $1 \leq i \leq K$.

Suppose now that $z_i(0) > 0$, $1 \leq i \leq K$. We show that there exists a~constant $\alpha^\ast>0$ such that any non-negative solution
${z}^{(T)}(\cdot)$ to~\eqref{eq:dif_fluid_model} that is defined in an~interval $[0,T]$ can be continued onto $[0,T+\alpha^\ast]$ remaining non-negative ($\alpha^\ast$ does not depend on $T$ and ${z}^{(T)}(\cdot)$). This will complete the proof. Define the rectangle
$$B^{\ast} = \bigl\{ 0 <
l_i\left({z}(0)\right) / 2 \leq z_i \leq
u_i\left({z}(0)\right) + 
l_i\left({z}(0)\right) / 2, \; 1 \leq i \leq K \bigr\}$$ and the constants
$$M^{\ast} = \| {f} \|_{{B}^{\ast}}, \quad b^{\ast} =
\min\limits_{1 \leq i \leq k}
l_i\left({z}(0)\right) / 2, \quad \alpha^{\ast} = b^{\ast} /
M^{\ast}.$$
Consider $T=0$. Let ${B}_0 = \left\{\max_{1 \leq i \leq K} |z_i -
z_i(0)| \leq b^\ast\right\}$, then $M^{\ast} \geq \| {f}
\|_{{B}_0}$ because ${B}_0 \subseteq {B}^\ast$. By the Peano theorem, there exists a~solution to equation~\eqref{eq:dif_fluid_model} in the interval $[0,
\alpha^\ast]$, and it is non-negative because ${B}_0 \subseteq \mathbb{R}^K_+$. Consider $T>0$ and a~non-negative solution ${z}^{(T)}(\cdot)$ to~\eqref{eq:dif_fluid_model}
defined in $[0,T]$. By the bounds~\eqref{eq:upper_bound_coordinates} and~\eqref{eq:lower_bound_coordinates}, we have $l_i\left({z}(0)\right) \leq
z_i^{(T)}(T) \leq
u_i\left({z}(0)\right)$, $i = 1, \ldots, K$. Let ${B}_T = \{ \max_{1 \leq i
\leq K} |z_i - z_i^{(T)}(T)| \leq b^\ast \}$, then $M^{\ast} \geq \| {f} \|_{{B}_T}$ because ${B}_T \subseteq {B}^\ast$. By the Peano theorem, in the
interval $[0, \alpha^\ast]$, there exists a~solution ${y}^{(T)}(\cdot)$ to the Cauchy problem
\begin{align*}
{y}'(t) &= {\upLambda} -
{p \ast} {y}(t) -
e^{-1} {m}({y}(t)),\\
{y}(0)&={z}^{(T)}(T),
\end{align*}
and it is non-negative because 
${B}_T \subseteq \mathbb{R}^K_+$. Then 
$$
{z}^{(T+\alpha^\ast)}(t):=\left\{
\begin{array}{ll}
{z}^{(T)}(t),& t \in [0,T],\\
{y}^{(T)}(t-T),& t \in [T,T+\alpha^\ast]
\end{array}
\right.
$$
is a~non-negative solution to~\eqref{eq:dif_fluid_model} in $[0,T+\alpha^\ast]$.

\subsection{Zero initial state}
This proof is based on the ideas from~\cite{bez}. We introduce a~family of auxiliary integral equations parametrised by $(\varepsilon, a) \in
\mathbb{R}_+ \times \mathbb{R}_+^K$. By further Lemma~\ref{lem:equivalent_description_fluid_model}, the equation with parameters $(0,{p})$ is equivalent
to the fluid model equation with zero initial condition. By Property~\ref{prty:exist_unique_monotonicity}, each auxiliary equation has a~solution. By
Property~\ref{prty:fms_starting_zero_unique}, for any $a \in \mathbb{R}_+^K$, a~solution to the equation with parameters $(0,a)$
is unique.

\begin{lemma}[Equivalent description of the fluid model] \label{lem:equivalent_description_fluid_model}
For any initial state ${z}^0$, the set of fluid model solutions coincides with the set of functions from $\mathcal{G}$ that
solve the following system of integral equations: for $i = 1,\ldots, K$, $t \geq 0$,
\begin{equation}\label{eq:equivalent_description_fluid_model}
\begin{split}
{z}_i(t) =& \ z_i^0\exp\Biggl( - p_i t - \!\! \int_0^t \!\!
\dfrac{e^{-1}}{\|{z}(s)\|_1} \, ds \Biggr) \\
&+ \upLambda_i
\!\! \int_0^t \!\! \exp \Biggl(
- p_i(t - s) - \!\! \int_s^t \!\!
\dfrac{e^{-1}}{\|{z}(x)\|_1} \, dx \Biggr) ds.
\end{split}
\end{equation}
\end{lemma}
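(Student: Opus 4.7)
The plan is to bypass~(\ref{eq:int_fluid_model}) directly and instead establish equivalence of~(\ref{eq:equivalent_description_fluid_model}) with the Cauchy problem~(\ref{eq:dif_fluid_model}); by Remark~\ref{rem:dif_fluid_model} this yields the lemma. Writing the $i$-th coordinate of~(\ref{eq:dif_fluid_model}) as the scalar first-order linear ODE
\[
z'_i(t) + \Bigl(p_i + \tfrac{e^{-1}}{\|{z}(t)\|_1}\Bigr) z_i(t) = \upLambda_i,
\]
I recognise a standard variation-of-constants problem, the only novelty being that the coefficient of $z_i$ depends on the unknown through $\|{z}\|_1$. The natural integrating factor is $\mu_i(t) = \exp\bigl(p_i t + \int_0^t e^{-1}/\|{z}(s)\|_1\, ds\bigr)$, and the two exponentials appearing in~(\ref{eq:equivalent_description_fluid_model}) are exactly $1/\mu_i(t)$ and $\mu_i(s)/\mu_i(t)$. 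So the task reduces to executing the textbook argument carefully in both directions.

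For the forward direction, if ${z}$ is an FMS then membership in $\mathcal{G}$ forces $\|{z}(\cdot)\|_1$ to be continuous and strictly positive on $(0,\infty)$, so the coefficient is continuous there. On any $[\varepsilon, t] \subset (0,\infty)$ one computes $(\mu_i z_i)'(s) = \upLambda_i \mu_i(s)$, integrates, divides by $\mu_i(t)$, and passes $\varepsilon \to 0^+$ using continuity of ${z}$ at $0$.

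For the backward direction, if ${z} \in \mathcal{G}$ satisfies~(\ref{eq:equivalent_description_fluid_model}) then $e^{-1}/\|{z}(s)\|_1$ is continuous on $(0,\infty)$, so the right-hand side is differentiable in $t$ there. The Leibniz rule and the fundamental theorem of calculus recover the ODE coordinate-wise, and for non-zero ${z}^0$ the initial condition holds by direct substitution $t = 0$; Remark~\ref{rem:dif_fluid_model} finishes the argument.

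The genuine subtlety is the zero initial state. The informal computation in Remark~\ref{rem:derivative_at_zero} indicates $\|{z}(s)\|_1 \sim (\|\upLambda\|_1 - e^{-1})\,s$ near the origin, so $\int_0^t e^{-1}/\|{z}(s)\|_1\, ds = +\infty$. Nevertheless~(\ref{eq:equivalent_description_fluid_model}) remains meaningful: the first summand has $z_i^0 = 0$ multiplying an exponential tending to $0$, naturally read as $0$, while in the second summand the same divergence forces $\exp\bigl(-\int_s^t e^{-1}/\|{z}(x)\|_1\, dx\bigr)$ to vanish like a positive power of $s$ as $s \to 0^+$, producing an integrable singularity at $s=0$. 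Accordingly, both directions must be carried out on $(\varepsilon, t]$ and then passed to the limit $\varepsilon \to 0^+$; in the backward direction the polynomial decay near the origin is what delivers the correct boundary value ${z}(0) = (0,\ldots,0)$, so tracking this decay will be the main technical step.
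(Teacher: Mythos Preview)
Your integrating-factor approach is correct and is the natural textbook route. The paper proves the implication $(\ref{eq:dif_fluid_model})\Rightarrow(\ref{eq:equivalent_description_fluid_model})$ differently: rather than computing the variation-of-constants formula, it freezes the coefficient by regarding
\[
u_i'(t)=\upLambda_i-\Bigl(p_i+\dfrac{e^{-1}}{\|z(t)\|_1}\Bigr)u_i(t),\qquad u_i(0)=z_i^0,
\]
as a linear Cauchy problem in $u$ with \emph{known} coefficient (coming from the given FMS $z$), proves uniqueness of continuous solutions via the comparison Lemma~\ref{lem:fms_bounds} applied to the difference of two solutions, and then observes that both $z$ itself and the right-hand side of~(\ref{eq:equivalent_description_fluid_model}) solve it, so they coincide. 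Your direct computation is more elementary and avoids Lemma~\ref{lem:fms_bounds}; the paper's argument is slightly slicker in that it sidesteps the $\varepsilon\to0$ bookkeeping entirely, since Lemma~\ref{lem:fms_bounds} only requires differentiability away from $0$ and so handles the unbounded coefficient at the origin without any limiting procedure.

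Two remarks on your last paragraph. First, the ``polynomial decay'' analysis you anticipate is not actually needed in either direction. In the forward direction with $z^0=0$, the first term $z_i(\varepsilon)\exp\bigl(-p_i(t-\varepsilon)-\int_\varepsilon^t e^{-1}/\|z\|_1\bigr)$ tends to $0$ simply because $z_i(\varepsilon)\to0$ and the exponential is bounded by $1$; the second term converges by dominated convergence since its integrand is also bounded by $1$. In the backward direction, the initial condition $z(0)=z^0$ is immediate from substituting $t=0$ into~(\ref{eq:equivalent_description_fluid_model}), regardless of whether $z^0$ vanishes. Second, be careful not to \emph{use} the asymptotic $\|z(s)\|_1\sim(\|\upLambda\|_1-e^{-1})s$ from Remark~\ref{rem:derivative_at_zero}: that remark is heuristic, and its rigorous justification (Property~\ref{prty:derivative_at_zero}) comes after the present lemma and relies on it, so invoking it here would be circular. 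Fortunately, as just noted, you do not need it.
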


\begin{proof} As we differentiate equations~\eqref{eq:equivalent_description_fluid_model}, the fluid model equation~\eqref{eq:dif_fluid_model} follows. We
now show that~\eqref{eq:dif_fluid_model} implies~\eqref{eq:equivalent_description_fluid_model}. Let ${z}(\cdot)$ be an~FMS with initial state
${z}^0$ and consider the following Cauchy problem with respect to ${u}(\cdot)$: for $i=1,\ldots,K$,
\begin{equation}\label{eq:to_show_equivalence}
\begin{split}
u'_i(t) &= \upLambda_i - \left(p_i + \dfrac{e^{-1}}{\|{z}(t)\|_1}
\right) u_i(t), \quad t>0,\\
u_i(0) &= z_i^0.
\end{split}
\end{equation}
If~\eqref{eq:to_show_equivalence} has a~continuous solution, it must be unique. Indeed, suppose that ${u}(\cdot)$, $\widetilde{{u}}(\cdot)$ are
two continuous solutions to~\eqref{eq:to_show_equivalence}. Let ${w}(\cdot)={u}(\cdot) - \widetilde{{u}}(\cdot)$. Then, for $i=1,\ldots,K$,\begin{align*}
w'_i(t) &= - \left(p_i + \dfrac{e^{-1}}{\|{z}(t)\|_1}
\right) w_i(t), \quad t>0,\\
w_i(0) &= 0.
\end{align*}
Lemma~\ref{lem:fms_bounds} applied to $x(\cdot) = w_i(\cdot)$ and $C=0$, and $x(\cdot) = - w_i(\cdot)$ and $C=0$, $i=1,\ldots,K$, implies that
${w}(\cdot) \equiv {0}$.

Finally, ${z}(\cdot)$ is a~solution to~\eqref{eq:to_show_equivalence} by~\eqref{eq:dif_fluid_model}. For $i=1,\ldots,K$, $t \geq 0$, put $f_i(t)$ to be
the RHS of~\eqref{eq:equivalent_description_fluid_model}. Differentiating of ${f}(\cdot)$ implies that it is a~solution to~\eqref{eq:to_show_equivalence}, too. Since~\eqref{eq:to_show_equivalence} has a unique continuous solution, ${z}(\cdot)$ and ${f}(\cdot)$ coincide, and hence ${z}(\cdot)$ is a~solution
to~\eqref{eq:equivalent_description_fluid_model}. \qed
\end{proof}

\paragraph{Auxiliary fluid model solutions and their properties}

For each $(\varepsilon,a) \in \mathbb{R}_+ \times \mathbb{R}^K_+$, introduce the operator
${F}^{(\varepsilon,\,a)} \colon \mathcal{G} \to
\mathcal{G}$ defined by
\begin{equation*}
\begin{split}
F_i^{(\varepsilon,\,a)}({u})(t)=\varepsilon+\upLambda_i
\!\! \int_0^t \!\! \exp \Biggl( - a_i (t - s) - \!\! \int_s^t \!\!
\dfrac{e^{-1}}{\|{u}(x)\|_1}\,dx \Biggr) ds, \\
t \geq 0, \quad i=1, \ldots, K.
\end{split}
\end{equation*}

\begin{definition}\label{def:auxiliary_fms}
Let $(\varepsilon,a) \in \mathbb{R}_+\times
\mathbb{R}^K_+$. A~function ${z}(\cdot) \in \mathcal{G}$ solving
the integral equation
\begin{equation}\label{eq:auxiliary_fms}
{z}(t) = {F}^{(\varepsilon,\, a)}({z})(t),
\quad t \geq 0,
\end{equation}
is called an~\textit{$(\varepsilon,a)$-fluid model
solution} (for short, we write ``{\it $(\varepsilon, a)$-FMS\,}'').
\end{definition}

Further we establish a~number of properties of the auxiliary fluid
model solutions defined above, including existence and uniqueness of an~$(\varepsilon,a)$-FMS for any $(\varepsilon,a) \in
\mathbb{R}_+\times
\mathbb{R}^K_+$.

For $a \in \mathbb{R}^K_+$, put 
\begin{align*}
a^\ast &= \max_{1 \leq i \leq K} a_i, & a_\ast & = \min_{1 \leq i \leq K} a_i,\\
a^{\text{u}} & = (a^\ast,\ldots,a^\ast),& a^{\text{l}} & = (a_\ast,\ldots,a_\ast).
\end{align*}

\begin{prty}\label{prty:exist_unique_monotonicity} In what follows, ${z}^{(\varepsilon,\, a)} (\cdot)$ denotes an
$(\varepsilon,a)$-FMS.
\begin{itemize}
\item[\textup{(i)}] For any $(\varepsilon,a) \in \mathbb{R}_+\times
\mathbb{R}^K_+$, there exists an~$(\varepsilon,\, a)$-FMS.
\item[\textup{(ii)}] If $\varepsilon > 0$, then an~$(\varepsilon,a)$-FMS is unique.
\item[\textup{(iii)}] If $a_1=\ldots=a_K$, then a~$(0, a)$-FMS is unique and given by
\begin{equation} \label{eq:fms_equal_pi}
z_i^{(0,\,a)}(t)= \left\{
\begin{array}{ll}
\dfrac{\upLambda_i}{\| {\upLambda}
\|_1}\left(\|{\upLambda}\|_1-e^{-1}\right)t& \text{if } a_1=0,\\
\dfrac{\upLambda_i}{\| {\upLambda}
\|_1}\dfrac{\|{\upLambda}\|_1-e^{-1}}{a_1}(1-e^{-a_1 t})&
\text{if } a_1>0,
\end{array}
\right.
\end{equation}
\item[\textup{(iv)}] If $\varepsilon > \delta \geq 0$, then ${z}^{(\varepsilon,\, a)} (t) \geq {z}^{(\delta,\, a)} (t)$, $t \geq 0$.
\item[\textup{(v)}] A~$(0,a)$-FMS admits the bounds ${z}^{(0,\, a^{\textup{u}})} (t) \leq {z}^{(0,\, a)} (t) \leq
{z}^{(0,\, a^{\textup{l}})} (t)$, $t \geq 0$.
\end{itemize}
\end{prty}

\begin{proof} \textit{\textup{(i)} for $\varepsilon>0$.}
Put ${z}^0(\cdot) \equiv (\varepsilon,\ldots,\varepsilon)$ and ${z}^{n+1}(\cdot)={F}^{(\varepsilon,\, x)}({z}^n)(\cdot)$, $n \geq 0$.
Then ${z}^1(t) \geq (\varepsilon, \ldots, \varepsilon) = {z}^0(t)$, $t \geq 0$.
The operator ${F}^{(\varepsilon,\, a)}$ is monotone, that is, ${u}(t) \geq {v}(t)$ for all $t
\geq 0$ implies ${F}^{(\varepsilon,\, a)}({u})(t) \geq
{F}^{(\varepsilon, \, a)}({v})(t)$ for all $t
\geq 0$. Also ${F}^{(\varepsilon,\, a)}({u})(t) \leq (\varepsilon, \ldots, \varepsilon) +
t {\upLambda}$ for all ${u}(\cdot) \in \mathcal{G}$ and all $t \geq 0$. Hence, for each $t \geq 0$, the sequence
$\{{z}^n(t); \ n \geq 0 \}$ is non-decreasing and bounded from
above, and there exists the point-wise limit of
${z}^n(\cdot)$ as $n \to \infty$; denote it by ${z}(\cdot)$. Now we show that ${z}(\cdot)$ is an
$(\varepsilon,a)$-FMS. Take an~arbitrary
$t \geq 0$. For all $n \geq 0$, we have ${z}^{n}(\cdot) \geq (\varepsilon, \ldots, \varepsilon)$, and hence
$1 / \|{z}^{n}(\cdot)\|_1 \leq
1 / (K\varepsilon)$. Then, by the dominated convergence theorem, for all $s \in [0,t]$, $\int_s^t e^{-1} / \| {z}^{n}(x) \|_1 dx \to \int_s^t e^{-1} / \|{z}(x) \|_1 dx$
as $n \to \infty$. The last argument, together with $\exp \bigl( - a_i {(t - s)} - \int_s^t e^{-1} / \|{z}^{n}(x)\|_1 dx \bigr) \leq 1$, $s \in [0,t]$, and, again, the dominated convergence theorem, implies that ${F}^{(\varepsilon,\,a)}({z}^{n})(t) \to
{F}^{(\varepsilon,\, a)}({z})(t)$ as $n \to \infty$. So, indeed, ${z}(\cdot)$ satisfies
equation~\eqref{eq:auxiliary_fms} for all $t \geq 0$.

(ii) Let ${z}(\cdot)$ and $\widetilde{{z}}(\cdot)$ be two $(\varepsilon,a)$-FMS's. Take an~arbitrary $T > 0$. Since $ {a_i (t -s)} + \int_s^t e^{-1} / \|{z}(x)\|_1 dx \leq (\| a \| + (e K \varepsilon)^{-1})T$, $0 \leq s \leq t \leq T$, and the same is true for
$\widetilde{{z}}(\cdot)$, by Lipschitz continuity of $\exp(\cdot)$ on compact sets, there exists a~constant $\alpha(T)$ such that, for~$t \leq T$,
\begin{equation*}
\begin{split}
\|{z}(t) - \widetilde{{z}}(t)\| & \leq \alpha(T) e^{-1} \!\! \int_0^t
\!\! \int_s^t \Bigl| 1 / \| {z}(x) \|_1 - 
1 / \| \widetilde{{z}}(x) \|_1 \Bigr| dx ds \\
& \leq \alpha(T) e^{-1} T \!\! \int_0^t \Bigl| 1 / \| {z}(x)
\|_1 - 1 / \| \widetilde{{z}}(x) \|_1 \Bigr| dx.
\end{split}
\end{equation*}
Then, by Lipschitz continuity of $1 / \| {\cdot} \|_1$ on the set $\mathbb{R}_\varepsilon^K$ and by the Gronwall inequality, ${z}(\cdot)$ and
$\widetilde{{z}}(\cdot)$ must coincide in all finite intervals $[0,T]$, and hence they coincide on $[0,\infty)$.
 
(iii) Due to  Lemma~\ref{lem:equivalent_description_fluid_model}, $(0,a)$-FMS's are defined by the Cauchy problem
\begin{align*}
{z}'(t) &= {\upLambda} - a_1{z}(t)-e^{-1} {z}(t) / \| {z}(t) \|_1, \quad t>0, \\
{z}(0) &= {0}.
\end{align*}
Summing up its coordinates, we get the Cauchy problem
\begin{align*}
\|{z}(t)\|_1' &= (\| {\upLambda} \|_1 - e^{-1}) - a_1
\|{z}(t)\|_1, \quad t>0,\\
\|{z}(0)\|_1 &= 0,
\end{align*}
which admits a~unique solution
$$
\|{z}(t)\|_1= \left\{
\begin{array}{ll}
\left(\|{\upLambda}\|_1-e^{-1}\right)t,& \text{if } a_1=0,\\
(\|{\upLambda}\|_1-e^{-1})(1-e^{-a_1t}) / a_1,& \text{if }
a_1>0.
\end{array}
\right.
$$
In the case of $\varepsilon=0$ and $a_1=\ldots=a_K$, equation~\eqref{eq:auxiliary_fms} implies that ${z}(\cdot) / \| {z}(\cdot) \|_1 \equiv
{\upLambda} / \| {\upLambda} \|_1$. Then the~unique $(0,a)$-FMS is given by~\eqref{eq:fms_equal_pi}.

\textit{\textup{(i)} for $\varepsilon=0$, and \textup{(v)}.} In order to prove existence, consider the sequence
${z}^0(\cdot):={z}^{(0,\, a^{\text{u}})}(\cdot)$, ${z}^{n+1}(\cdot):={F}^{(0, \, a)}({z}^n)(\cdot)$,
$n \geq 0$. By the reasoning analogous to that in the case of $\varepsilon > 0$, the point-wise limit of this sequence is a~$(0,a)$-FMS. To prove the lower bound, consider the sequence ${z}^0(\cdot):={z}^{(0,a)}(\cdot)$,
${z}^{n+1}(\cdot):={F}^{(0,a^{\text{u}})}({z}^n)(\cdot)$, $n \geq 0$. It is non-increasing in $n$, and its point-wise limit
is the $(0,a^{\text{u}})$-FMS. Then ${z}^{(0,\, a^{\text{u}})}(t) \leq {z}^{0}(t)={z}^{(0,\, a)}(t)$ for all $t \geq 0$. Similarly, the upper bound holds.

(iv) Consider the sequence ${z}^0(\cdot) :={z}^{(\delta, \, a)}(\cdot)$,
${z}^{n+1}(\cdot):={F}^{(\varepsilon, \,a)}({z}^n)(\cdot)$, $n \geq 0$. It is non-decreasing in $n$, and its point-wise limit is the $(\varepsilon,a)$-FMS. Then ${z}^{(\varepsilon, \, a)}(t) \geq
{z}^{0}(t)={z}^{(\delta, \, a)}(t)$ for all $t \geq 0$. \qed
\end{proof}

We proceed with properties of $(0,a)$-FMS's at $t=0$ (\textit{cf.} Remark~\ref{rem:derivative_at_zero}).
\begin{prty}\label{prty:derivative_at_zero}
For any $(0,a)$-FMS ${z}^{(0, \, a)}(\cdot)$, its right derivative at $t=0$ is
well defined and
$( {z}^{(0, \, a)} )'(0)=( 1 - e^{-1} / \|{\upLambda}\|_1 ) {\upLambda}$. Also the limit
$\lim_{t \to 0} {z}^{(0, \, a)}(t) / \|
{z}^{(0, \, a)}(t) \|_1 = {\upLambda} / \| {\upLambda} \|_1$ exists.
\end{prty}

\begin{proof} Here are three possibilities: either $a^\ast \geq a_\ast>0$ or $a^\ast >
a_\ast = 0$, or $a^\ast = a_\ast = 0$. We~prove the property in the first case, the other two
cases can be treated similarly. By Property~\ref{prty:exist_unique_monotonicity},~(iii) and~(v), for $i = 1, \ldots, K$, $t \geq 0$,
\[
\dfrac{\upLambda_i}{\| {\upLambda}
\|_1}\dfrac{\|{\upLambda}\|_1-e^{-1}}{a^\ast}(1-e^{-a^\ast
t}) \leq z^{(0, \, a)}_i(t) \leq  \dfrac{\upLambda_i}{\|
{\upLambda}
\|_1}\dfrac{\|{\upLambda}\|_1-e^{-1}}{a_\ast}(1-e^{-a_\ast
t}).
\]
Then, for any sequence $t_n \to 0$, $n \to \infty$,
\[
\limsup\limits_{n \to \infty}
\dfrac{z_i^{(0, \, a)}(t_n)}{t_n}  \leq \dfrac{\upLambda_i}{\|
{\upLambda} \|_1 }\left( \|{\upLambda}\|_1-e^{-1}
\right) \lim\limits_{n \to \infty} \dfrac{1-e^{-a_\ast
t_n}}{a_\ast t_n}=\dfrac{\upLambda_i}{\| {\upLambda} \|_1
}\left( \|{\upLambda}\|_1-e^{-1} \right).
\]
Similarly, $\liminf_{n \to \infty} z_i^{(0, \, a)}(t_n) / t_n \geq (\upLambda_i / \| {\upLambda} \|_1 ) \left( \|{\upLambda}\|_1-e^{-1}
\right).$ Hence, the derivative exists. The~second result follows from Taylor's expansion, as was discussed in Remark~\ref{rem:derivative_at_zero}. \qed \end{proof}

Finally, we show uniqueness of a~$(0, a)$-FMS  by estimating it via the auxiliary FMS's with other parameters.
\begin{prty} \label{prty:fms_starting_zero_unique}
Fix $a \in \mathbb{R}_+^K$. For short, ${z}^\varepsilon$ denotes an~$(\varepsilon,a)$-FMS.
\begin{itemize} 
\item[\textup{(i)}] For $\varepsilon > 0$ and the function $\varphi_\varepsilon(t):=\int_0^t K e^{-1} / \| {z}^\varepsilon(s) \|_1 ds$,
\begin{equation}\label{eq:bound_via_others}
\| {z}^\varepsilon(t) - {z}^0(t) \|_1 \leq \varepsilon \left( K + \| a \|_1 + \varphi_\varepsilon(t) \right), \quad t \geq 0.
\end{equation}
\item[\textup{(ii)}] If $\varepsilon > 0$, $\varepsilon \to 0$, then $ \varepsilon \varphi_\varepsilon(t) \to 0$ for any $t > 0$.
\item[\textup{(iii)}] A~$(0, a)$-FMS is unique.
\end{itemize}
\end{prty}

\begin{proof} (i) Let $\varepsilon \geq 0$. By differentiating equation~\eqref{eq:auxiliary_fms}, we get, for $i=1,\ldots,K$,
\begin{align*}
(z_i^\varepsilon)'(t) &= \upLambda_i - \left(a_i + \dfrac{e^{-1}}{\| {z}^\varepsilon(t) \|_1} \right) \left( z_i^\varepsilon(t) - \varepsilon \right) ,
\quad t > 0,\\
z_i^\varepsilon(0) &= \varepsilon.
\end{align*}
Then integrating over $[0,t]$ yields
\begin{equation*}
z_i^\varepsilon(t) - \varepsilon = \upLambda_i t - \int_0^t \!\! \left(a_i + \dfrac{e^{-1}}{\| {z}^\varepsilon(s) \|_1} \right) \left( z_i^\varepsilon(s)- \varepsilon \right) ds , \quad t \geq 0,
\end{equation*}
which, after taking the sum in all coordinates, can be rewritten as
\begin{equation*}
\sum_{i=1}^K z_i^\varepsilon(t) + \sum_{i=1}^K \int_0^t a_i z_i^\varepsilon(s) ds = (\| {\upLambda} \|_1 - e^{-1}) t + \varepsilon K + \varepsilon \|
a \|_1 t + \varepsilon \varphi_\varepsilon(t), \quad t \geq 0.
\end{equation*}
The last equation implies that, for $\varepsilon > 0$,
\begin{equation} \label{eq:equality_via_others}
\begin{split}
\sum_{i=1}^K \left(z_i^\varepsilon(t) - z_i^0(t) \right) + \sum_{i=1}^K \int_0^t a_i \left(z_i^\varepsilon(s) - z_i^0(s) \right) ds &\\
= \varepsilon K +
\varepsilon \| a \|_1 t + \varepsilon \varphi_\varepsilon(t)&, \quad t \geq 0.
\end{split}
\end{equation}
Due to Property~\ref{prty:exist_unique_monotonicity},~(v), for $\varepsilon > 0$, both sums in the LHS of~\eqref{eq:equality_via_others} have non-negative
summands. By omitting the second sum, we obtain the bound~\eqref{eq:bound_via_others}.

(ii) Suppose that $a^\ast > 0$ (the other case can be treated similarly). Property~\ref{prty:exist_unique_monotonicity},~(iv) and~(v), together with $\|
{z}^\varepsilon(\cdot) \|_1 \geq K \varepsilon$, implies that, for $\varepsilon > 0$,
\begin{equation*}
\varphi_\varepsilon(t) = \int_0^t \!\! \dfrac { K e^{-1} } { \| {z}^\varepsilon(s) \|_1} \, ds \leq
\int_0^t \!\! \dfrac{K e^{-1}}{ \max \{ K \varepsilon,\|
{z}^{(0, \, a^{\text{u}})}(s) \|_1 \}}\,ds.
\end{equation*}
By~\eqref{eq:fms_equal_pi}, $\|
{z}^{(0, \, a^{\text{u}})}(s) \|_1 \leq K \varepsilon$ if and only if $s \leq f(\varepsilon):= \dfrac{1}{a^\ast}\ln
\dfrac{\| {\upLambda} \|_1 - e^{-1}}{\| {\upLambda} \|_1
- e^{-1}-K \varepsilon a^\ast}$. We have $f(\varepsilon)>0$ for $\varepsilon$ small enough and $f(\varepsilon) \to 0$, $\varepsilon \to 0$. Put
$\beta=K e^{-1} / (\| {\upLambda} \|_1 - e^{-1})$, then
\begin{align*}
\varepsilon \varphi_\varepsilon{t} =& \ e^{-1} f(\varepsilon)+\varepsilon \beta a^\ast\!\!
\int_{f(\varepsilon)}^t \!\! \dfrac{1}{1-e^{-a^\ast
s}}\,ds\\ 
=& \ e^{-1} f(\varepsilon) + \varepsilon \beta a^\ast \biggl( t - f(\varepsilon) +\dfrac{1}{a^\ast}\ln\left(1-e^{-a^\ast t}\right) \biggr) \\
&- \dfrac{\varepsilon \beta}{f(\varepsilon)} \biggl( f(\varepsilon)\ln\left( 1- e^{-a^\ast f(\varepsilon)} \right) \biggr).
\end{align*}
In the very RHS of the last equation, convergence of the first two summands to $0$ as $\varepsilon \to 0$ is clear. The first multiplier of the last
summand tends to a~finite constant, and the second multiplier tends to 0.

(iii) Suppose that ${z}^0(\cdot)$ and $\widetilde{{z}}^{\! \ 0}(\cdot)$ are two $(0,\boldsymbol{\pi})$-FMS's. For any $t > 0$, by~(i) and~(ii), $\|
{z}^0(t) - \widetilde{{z}}^{\! \ 0}(t) \|_1 \leq \| {z}^0(t) - {z}^\varepsilon(t) \|_1 + \| {z}^\varepsilon(t) -
\widetilde{{z}}^{\! \ 0}(t) \|_1 \to 0$ as $\varepsilon \to 0$. Hence, ${z}^0(\cdot)$ and $\widetilde{{z}}^{\! \ 0}(\cdot)$ must coincide in $[0,\infty)$. \qed
\end{proof}

\section{Proof of Theorem \ref{th:equilibrium_fms}} \label{sec:proof_equilibrium_fms}
\textit{Existence and uniqueness.} The function $f(s)=\sum_{i=1}^K p_i \upLambda_i / (s+p_i)$ is continuous and strictly decreasing in $(0,\infty)$, and takes all values between $\|{\upLambda} \|_1$ and 0 as $s$ goes along $(0,\infty)$. Then, by Assumption~\ref{ass:overload}, there exists a~unique ${z}^{\text{e}}$ satisfying \eqref{eq:equilibrium_formula}, and all its coordinates are positive. Equilibrium FMS's are defined by the equation\begin{equation}\label{eq:equation_equilibrium}
{\upLambda} - {p} \ast {z}^{\text{e}} - e^{-1}
 {z}^{\text{e}} / \left\| {z}^{\text{e}}
\right\|_1=0.
\end{equation}
In order to prove the first part of the theorem, we have to check that~\eqref{eq:equilibrium_formula} is a~solution to~\eqref{eq:equation_equilibrium}, and
that, if there is a~solution to~\eqref{eq:equation_equilibrium}, then it is necessarily~\eqref{eq:equilibrium_formula}.

By plugging~\eqref{eq:equilibrium_formula} into~\eqref{eq:equation_equilibrium} multiplied by $\left\|
{z}^{\text{e}} \right\|_1$, we obtain, for $i=1,\ldots,K$,
\begin{align*}
\biggl(\upLambda_i - \dfrac{p_i \upLambda_i}{x+p_i}
\biggr)\sum\limits_{j=1}^K
\dfrac{\upLambda_j}{x+p_j} - \dfrac{e^{-1}\upLambda_i}{x+p_i} =
\dfrac{\upLambda_i x}{x+p_i}\sum\limits_{j=1}^K
\dfrac{\upLambda_j}{x+p_j} - \dfrac{e^{-1}\upLambda_i}{x+p_i}\\
= \dfrac{\upLambda_i}{x+p_i}\biggl(\sum\limits_{j=1}^K \dfrac{\upLambda_j x}{x+p_j} - e^{-1}
\biggr) =\dfrac{\upLambda_i}{x+p_i}\biggl(\sum\limits_{j=1}^K
\upLambda_j - \sum\limits_{j=1}^K \dfrac{\upLambda_j p_j}{x+p_j}
-e^{-1} \biggr) = 0.
\end{align*}
So, indeed,~\eqref{eq:equilibrium_formula} is an~equilibrium FMS.

Suppose now that ${z}^{\text{e}}$ is a~solution to~\eqref{eq:equation_equilibrium}. By solving coordinate~$i$ of~\eqref{eq:equation_equilibrium} with
respect to $z_i^{\text{e}}$, we get
\[
z_i^{\text{e}}=\dfrac{\upLambda_i}{p_i+e^{-1}/\| {z}^{\text{e}} \|_1}, \quad i=1,\ldots,K. 
\]
Plug the last relation into the sum of the coordinates of~\eqref{eq:equation_equilibrium}, then
\[
\sum_{i=1}^K \dfrac{p_i\upLambda_i}{p_i+e^{-1}/\| {z}^{\text{e}} \|_1} = \| \upLambda \|_1 - e^{-1},
\]
Hence, ${z}^{\text{e}}$ satisfies~\eqref{eq:equilibrium_formula} with $x=e^{-1}/\| {z}^{\text{e}} \|_1$.

\textit{Stability.} When proving the second part of the theorem, we refer to the following result (see~\cite[Chapter~XIV,
Paragraph~11]{Hartman}).

Consider an~autonomous system of differential equations
\begin{equation}\label{eq:autonomous system}
{z}'(t)={f}({z(t)}), \quad t \geq 0, \quad
{z}, {f} \in \mathbb{R}^K.
\end{equation}

\begin{proposition}\label{stability}
Suppose that the function 
${f}({z})$ is continuous in an open set $E \subseteq \mathbb{R}^K$, and that, for any
${z}(0) \in E$, there exists a~unique solution
${z}(\cdot)$ to system~\eqref{eq:autonomous system} and
${z}(t) \in E$ for all $t \geq 0$. Suppose also that there
exists a~non-negative continuously differentiable in $E$ function
$V({z})$ such that $V({z}) \to \infty$ as
$\|{z}\| \to \infty$, and the trajectory derivative of
$V({z})$ with respect to system~\eqref{eq:autonomous system},
defined by $V'({z})= \sum_{i=1}^K
f_i({z}\!) \, \partial V /\partial z_i ({z}\!)$,
is non-positive. If there exists a~unique point
$\widetilde{{z}}$ where $V'(\widetilde{{z}}\,)=0$,
then the solution $\widetilde{{z}}(t)\equiv
\widetilde{{z}}$ of system~\eqref{eq:autonomous system} is
asymptotically stable in $E$, i.e. any solution to~\eqref{eq:autonomous system} with initial condition in $E$ converges
to $\widetilde{{z}}$ as $t \to \infty$.
\end{proposition}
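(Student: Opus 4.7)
The plan has two components. For existence and uniqueness of the equilibrium, I would solve the stationary equation $\upLambda - p \ast z^{\text{e}} - e^{-1} z^{\text{e}} / \| z^{\text{e}} \|_1 = 0$ coordinate by coordinate: writing $x := e^{-1}/\| z^{\text{e}} \|_1$ forces $z_i^{\text{e}} = \upLambda_i/(p_i+x)$, and the consistency requirement $\| z^{\text{e}} \|_1 = e^{-1}/x$ (via the algebraic identity $x/(p_i+x) = 1 - p_i/(p_i+x)$) reduces to the scalar equation $\sum_i p_i \upLambda_i/(p_i+x) = \| \upLambda \|_1 - e^{-1}$. The left-hand side is continuous and strictly decreasing on $(0,\infty)$, with limits $\| \upLambda \|_1$ at $0^+$ and $0$ at $\infty$, so Assumption~\ref{ass:overload} places the target in the range and the intermediate value theorem delivers a unique positive root. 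Reading the derivation in reverse then verifies that the vector \eqref{eq:equilibrium_formula} is indeed an equilibrium.

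For the convergence claim, I would apply the cited stability proposition on $E = (0,\infty)^K$ with the Kullback--Leibler-type Lyapunov function
\[
V(z) = \sum_{i=1}^K \bigl[ (z_i - z_i^{\text{e}}) - z_i^{\text{e}} \ln(z_i/z_i^{\text{e}}) \bigr],
\]
which is non-negative on $E$, vanishes only at $z^{\text{e}}$, is $C^1$, and grows to infinity as $\| z \| \to \infty$. Using the identity $\upLambda_i = p_i z_i^{\text{e}} + e^{-1} z_i^{\text{e}} / \| z^{\text{e}} \|_1$ to rewrite the vector field \eqref{eq:dif_fluid_model} as a centered difference, the trajectory derivative reads
\[
V'(z) = -\sum_i \frac{p_i (z_i - z_i^{\text{e}})^2}{z_i} + e^{-1} \Bigl( \frac{\| z^{\text{e}} \|_1}{\| z \|_1} - \frac{1}{\| z^{\text{e}} \|_1} \sum_i \frac{(z_i^{\text{e}})^2}{z_i} \Bigr).
\]
The first sum is manifestly non-positive, and the parenthesised term is non-positive by the Cauchy--Schwarz bound $\| z^{\text{e}} \|_1^2 = \bigl( \sum_i \sqrt{z_i} \cdot (z_i^{\text{e}}/\sqrt{z_i}) \bigr)^2 \leq \| z \|_1 \sum_i (z_i^{\text{e}})^2/z_i$. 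Equality in both parts simultaneously forces $z = z^{\text{e}}$, supplying the unique-critical-point hypothesis of the proposition.

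The main obstacle is discovering a Lyapunov function that can absorb the nonlinearity introduced by the coupling term $e^{-1} z/\| z \|_1$; the Cauchy--Schwarz identity above is the crux. The remaining hypotheses of the proposition are routine using machinery already in the paper: Theorem~\ref{th:fms_exist_unique} supplies forward uniqueness, the bounds \eqref{eq:upper_bound_coordinates}--\eqref{eq:lower_bound_coordinates} confine each trajectory to a compact subset of $\mathbb{R}_+^K$, and Remark~\ref{rem:nonzero_outside_zero} together with Property~\ref{prty:derivative_at_zero} guarantees that every FMS enters and stays in $E = (0,\infty)^K$ for all $t > 0$; for an FMS starting at $z(0) = (0,\ldots,0)$ I would apply the proposition to the time-shifted trajectory starting at $z(\varepsilon) \in E$ for arbitrary $\varepsilon > 0$ and transfer the conclusion back via forward uniqueness.
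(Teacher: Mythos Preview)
Your proposal does not address the stated Proposition at all: that proposition is a general Lyapunov-stability result quoted from Hartman, and the paper does not prove it. What you have written is a proof of Theorem~\ref{th:equilibrium_fms}, which \emph{applies} the proposition. I will therefore compare your argument to the paper's proof of Theorem~\ref{th:equilibrium_fms} in Section~\ref{sec:proof_equilibrium_fms}.

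Read as a proof of Theorem~\ref{th:equilibrium_fms}, your argument is correct but takes a genuinely different route in the stability half. The existence/uniqueness portion matches the paper exactly. For stability, the paper uses the weighted quadratic Lyapunov function $V(z)=\sum_i (z_i-z_i^{\text{e}})^2/z_i^{\text{e}}$ and establishes non-positivity of the trajectory derivative by an iterative Lagrange reduction (repeated completion of squares) of the cross term $u_K(y)$. You instead use the relative-entropy function $V(z)=\sum_i\bigl[(z_i-z_i^{\text{e}})-z_i^{\text{e}}\ln(z_i/z_i^{\text{e}})\bigr]$, and the coupling term collapses to a single Cauchy--Schwarz inequality. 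Your computation of $V'(z)$ checks out, and the equality analysis is fine (in fact the first sum alone, with all $p_i>0$, already pins down $z=z^{\text{e}}$). Your function has the pleasant extra feature of blowing up at the boundary of $E=(0,\infty)^K$, though the proposition as stated does not require this. The paper's quadratic choice is more elementary to write down but pays for it with the multi-step Lagrange reduction; your entropy choice demands a less obvious ansatz but the verification is a one-liner. Both satisfy all hypotheses of the cited proposition on $E=(0,\infty)^K$, and your handling of the zero initial condition via a time shift is the same reduction the paper makes.
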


By Remark~\ref{rem:nonzero_outside_zero} and Property~\ref{prty:exist_unique_monotonicity},~(iii) and~(v), any FMS at any time $t>0$ has all coordinates
positive. Then it suffices to show convergence to the equilibrium point for FMS's that start in the interior of $\mathbb{R}_+^K$. This, in turn, follows
from Proposition~\ref{stability} with the open set $E=(0,\infty)^K$, equation~\eqref{eq:dif_fluid_model} and the Lyapunov function
$$
V({z})=\sum\limits_{i=1}^K \dfrac{y_i^2}{z_i^{\text{e}}}, \quad \text{where } y_i:=z_i-z^{\text{e}}_i.
$$
By plugging $y_i$'s into~\eqref{eq:dif_fluid_model}, we get
$$
z'_i=-p_i y_i+e^{-1}\dfrac{z^{\text{e}}_i}{\|
{z}^{\text{e}} \|_1}-e^{-1}\dfrac{y_i+z^{\text{e}}_i}{\|
{y}+{z}^{\text{e}} \|_1}, \quad i=1,\ldots,K.
$$
Then,
\begin{equation*}
V'({z}) = \sum\limits_{i=1}^K \dfrac{2 y_i z_i'}{z^{\text{e}}_i} = -\sum\limits_{i=1}^K \dfrac{2
p_i y_i^2}{z^{\text{e}}_i} - \dfrac{2e^{-1}}{\|{y}+{z}^{\text{e}}\|_1} \biggl( \sum_{i=1}^{K}\dfrac{y_i^2}{z^{\text{e}}_i} -
\dfrac{\bigl(\sum_{i=1}^{K}
y_i\bigr)^2}{\sum_{i=1}^{K}
z^{\text{e}}_i} \biggr).
\end{equation*}

We have to check that $V'({z})$ is non-positive on $(0,\infty)^K$. For $k=1,\ldots,K$, let
\begin{equation*}
u_k({y}) = \sum_{i=1}^{k}\dfrac{y_i^2}{z^{\text{e}}_i} - \dfrac{\bigl(\sum_{i=1}^{k}
y_i\bigr)^2}{\sum_{i=1}^{k}
z^{\text{e}}_i}.
\end{equation*}
In particular,
\begin{equation} \label{eq:lyapunov_function}
V'({z}) = -\sum\limits_{i=1}^K \dfrac{2
p_i y_i^2}{z^{\text{e}}_i} - \dfrac{2e^{-1}}{\|{y}+{z}^{\text{e}}\|_1}\,u_K({y}).
\end{equation}
The quadratic form $u_K({y})$ is non-negative on the set $\{ {y} \colon {y} + {z}^{\text{e}} \in (0,\infty)^K \}$,
which we can shown by applying the Lagrange reduction to canonical form. We start from separating out the terms with $y_K$ and obtain
\begin{equation*}
u_K({y}) = \dfrac{1}{\sum_{i=1}^K
z^{\text{e}}_i} \biggl( \sqrt{\dfrac{z^{\text{e}}_K}{\sum_{i=1}^{K-1}
z^{\text{e}}_i}}\sum_{i=1}^{K-1}y_i-\sqrt{\dfrac{\sum_{i=1}^{K-1}
z^{\text{e}}_i}{z^{\text{e}}_K}}y_K
\biggr)^2 + 
u_{K-1}({y}).
\end{equation*}
Iterating the procedure for $y_{K-1}$,\ldots,$y_2$, we transform $u_K({y})$ into a~sum of squares. Then, by~\eqref{eq:lyapunov_function},
$V'({z})$ is non-positive on $(0,\infty)^K$. \qed

\section{Proof of Theorem~\ref{th:main}} \label{sec:proof_fluid_limits}
The proof is organised as follows. Section~\ref{ssec:representation} contains a~representation of the workload process before and after the fluid scaling. In Section~\ref{ssec:proof_theorem}, we formulate two auxiliary results (Lemmas~\ref{lem:G2_to_zero} and~\ref{lem:G3_martingale_to_zero}), and then proceed with
the proof of relative compactness of the family $\{ \overline{{W}}^R(\cdot); \ R>0 \}$ and weak convergence (as $R \to \infty$) along subsequences from this family. Proofs of Lemmas~\ref{lem:G2_to_zero} and~\ref{lem:G3_martingale_to_zero} are given in Sections~\ref{ssec:proof_lemma3} and~\ref{ssec:proof_lemma4},
respectively.

\subsection{Workload representation} \label{ssec:representation}
Throughout the proof, unless otherwise stated, we use the following representation of the
workload dynamics:
\begin{equation}\label{eq:wl_repr}
{W}^R(n+1) = {W}^R(n) + {A}(n+1) -
{T}\left(n,{W}^R(n)\right) -
{I}^R\left(n,{W}^R(n)\right),
\end{equation}
where, for ${x} \in \mathbb{Z}^K_+$ and $i=1,\ldots,K$,
\begin{align*}
p_i({x}) &= 
\left\{ \begin{array}{ll}
B\left(x_i, 1 / \|{x}\|_1 \right)(\{1\}) \prod_{j
\neq i} B\left( x_j, 1 / \|{x}\|_1 \right)(\{0\}) & \text{if } {x} \neq {0}, \\
0  & \text{if } {x} = {0}, \\
\end{array} \right.
\displaybreak[0] \\
T_i(n,{x}) &= \vec{I} \biggl\{
\sum\limits_{j=1}^{i-1}p_j({x}) \leq  U(n)
 < \sum\limits_{j=1}^{i}p_j({x}) \biggr\}, \displaybreak[0] \\
I^R_i(n,{x})&= \sum_{j=1}^{x_i-T_i(n,{x})}
\xi^R_i(n,j),
\end{align*}
and
\begin{itemize}
\item[$\bullet$] $\left\{U(n); \ n \geq 0\right\}$ is an~i.i.d. sequence, and $U(0)$ is distributed uniformly over $[0,1]$,
\item[$\bullet$] $\left\{\xi_i^R(n,j); \ j \geq 1\right\}$,  $n \geq 0$, $i=1,\ldots,K$, are independent i.i.d.
sequences of Bernoulli r.v.'s, and   $\vec{P}\{\xi_i^R(n,1)=1  \} = p_i/R = 1 - \vec{P}\{\xi_i^R(n,1)=0  \}$,
\item[$\bullet$] the sequences $\left\{{A}(n); \ n
\geq 1\right\}$, $\left\{U(n); \ n \geq 0\right\}$ and
$\left\{\xi_i^R(n,j); \ j \geq 1\right\}$, $n \geq 0$, $i=1,\ldots,K$,
are mutually independent and also do not depend on the initial
condition ${W}^R(0)$.
\end{itemize}
For short, we put
$$
h(x)=\left\{
\begin{array}{ll}
0,& x=0,\\
\left( 1-1/x \right)^{x-1},& x \geq 1.
\end{array}
\right.
$$
Then, in particular,
\begin{align*}
p_i({x}) &=
h\bigl(\|{x}\|_1\bigr) m_i\bigl({x}\bigr), \displaybreak[0] \\
\vec{E} \Bigl[{T}\left(i,{W}^R(i)\right) \Big|
{W}^R(i) \Bigr] &=
h \left( \|{W}^R(i)\|_1 \right) {m}\left( {W}^R(i) \right) , \displaybreak[0] \\
\vec{E} \Bigl[{I}^R\left(i,{W}^R(i)\right) \Big|
{W}^R(i) \Bigr] &= \dfrac{{p}}{R} \ast \Bigl(
{W}^R(i) - h \left( \|{W}^R(i)\|_1 \right)
{m}\left( {W}^R(i) \right)\Bigr).
\end{align*}

We now transform the workload dynamics into an~integral equation that, as we show later, differs from the fluid model equation~\eqref{eq:int_fluid_model} by
the terms that vanish as ${R \to \infty}$. For any $n \in \mathbb{Z}_+$, we have
\begin{align}
\begin{split}
{W}^R(n) &= {W}^R(0) + \sum\limits_{i=1}^n
{A}(i) - \sum\limits_{i=0}^{n-1}
{T}\left(i,{W}^R(i)\right) - \sum\limits_{i=0}^{n-1}
{I}^R\left(i,{W}^R(i)\right)
\end{split} \nonumber \\
\begin{split}
& =  {W}^R(0) + n {\upLambda}  -
\sum\limits_{i=0}^{n-1}h \left( \|{W}^R(i)\|_1 \right)
{m}\left( {W}^R(i) \right)  \\
& \quad - \dfrac{{p}}{R} \ast \sum\limits_{i=0}^{n-1}\Bigl({W}^R(i) - h \left( \|{W}^R(i)\|_1 \right)
{m}\left( {W}^R(i) \right)\Bigr) + {M}^R(n),
\end{split} \label{eq:workload}
\end{align}
where the sequence $\left\{ {M}^R(n); \ n \geq 0 \right\}$ forms a~zero-mean 
martingale since
\begin{align*}
{M}^R(n) & =  \sum\limits_{i=1}^n \biggl( {A}(i) -\vec{E} {A}(i) \biggr) -
\sum\limits_{i=0}^{n-1}  \biggl( {T}\left(i,{W}^R(i)\right) -\vec{E}
\Bigl[{T}\left(i,{W}^R(i)\right) \Big|
 {W}^R(i) \Bigr] \biggr)\\
& \quad -  \sum\limits_{i=0}^{n-1}
\biggl( {I}^R\left(i,{W}^R(i)\right) - \vec{E}
\Bigl[{I}^R\left(i,{W}^R(i)\right) \Big|
{W}^R(i) \Bigr] \biggr).
\end{align*}
Introduce the fluid scaled version of the martingale $\left\{ {M}^R(n); \ n \geq 0 \right\}$ analogous to that of the workload process:
$$
\overline{{W}}^R(t)=\dfrac{1}{R}{W}^R\bigl( \lfloor
Rt \rfloor \bigr), \quad
\overline{{M}}^R(t)=\dfrac{1}{R}{M}^R\bigl( \lfloor
Rt \rfloor \bigr),
$$
Then equation~\eqref{eq:workload} turns into the integral
equation
\begin{equation}\label{eq:scaled_workload}
\begin{split}
\overline{{W}}^R(t) =& \ \overline{{W}}^R(0) +
\dfrac{ \lfloor Rt \rfloor}{R} {\upLambda} \\ 
& - \biggl((1, \ldots, 1)-\dfrac{{p}}{R}\biggr) \ast \!\!\!\!\!
\int\limits_0^{\lfloor Rt \rfloor / R}\!\!\!\!\! h \left( R
\|\overline{{W}}^R(s)\|_1 \right) {m}\left(
\overline{{W}}^R(s) \right)\Bigr) ds \\
&  -  {p} \ast \!\!\!\!\!
\int\limits_0^{\lfloor Rt \rfloor / R} \!\!\!\!\!
\overline{{W}}^R(s)\,ds + \overline{{M}}^R(t).
\end{split}
\end{equation}

Finally, we rewrite equation~\eqref{eq:scaled_workload} as
\begin{equation}\label{eq:scaled_workload_final}
\overline{{W}}^R(t)  =  \overline{{W}}^R(0) + t
{\upLambda} - e^{-1}\!\!\! \int\limits_0^t \!\!
{m}\left(\overline{{W}}^R(s)\right) ds - {p \ast} \!\! \int\limits_0^t \!\!\overline{{W}}^R(s) ds + {G}^R(t),
\end{equation}
where
\begin{align*}
{G}^R(t) =& \ \overline{{M}}^R(t) + {G}^{1,R}(t) + {G}^{2,R}(t) + {G}^{3,R}(t), \displaybreak[0] \\
{G}^{1,R}(t) =& \ \left(\dfrac{\lfloor Rt \rfloor}{R} -
t\right){\upLambda }, \displaybreak[0] \\
{G}^{2,R}(t) =& \ e^{-1}\!\!\! \int\limits_0^t \!\!
{m}\left(\overline{{W}}^R(s)\right) ds \\
&-\biggl((1, \ldots, 1) -\dfrac{{p}}{R}\biggr){\ast}
\!\!\!\!\! \int\limits_0^{\lfloor Rt \rfloor / R} \!\!\!\!\!
h\left( R \|\overline{{W}}^R(s)\|_1
\right){m}\left(\overline{{W}}^R(s)\right) ds, \displaybreak[0] \\
{G}^{3,R}(t) =& \ {p \ast} \!\!\!\!\!
\int\limits_{\lfloor Rt \rfloor / R}^t \!\!\!\!\!
\overline{{W}}^R(s)\,ds.
\end{align*}

\subsection{Relative compactness and limiting equations} \label{ssec:proof_theorem}

We first discuss convergence ${G}^R(\cdot) \Rightarrow (0, \ldots, 0)$ as $R \to \infty$ in $\mathcal{D}$. By Remark~\ref{rem:skorokhod},
${G}^{1,R}( \cdot ) \Rightarrow (0,\ldots,0)$ as $R \to \infty$. Weak convergence to zero of the three other summands in ${G}^R(\cdot)$
follows from Lemmas~\ref{lem:G2_to_zero} and~\ref{lem:G3_martingale_to_zero}.

\begin{lemma} \label{lem:G2_to_zero} Let Assumptions~\ref{ass:nontrivial_arrivals} and~\ref{ass:overload} hold. Then
\begin{itemize} \itemsep0pt \parsep0pt 
\item[\textup{(i)}] for any $\delta > 0$ and $\varepsilon > 0$, there exists a
$\gamma=\gamma(\delta,\varepsilon)>0$ such that
\begin{equation*}
\liminf_{R \to \infty} \ \vec{P} \left\{ \varphi^R(\gamma R)
\leq \delta R \right\} \geq 1-\varepsilon,
\end{equation*}
where $\varphi^R(\gamma R) := \inf \{ n \geq 0 : W^R(n) \geq \gamma R \},$
\item[\textup{(ii)}] for any $\varepsilon > 0$ and $\upDelta > \delta > 0$,
there exists a~$C= C(\varepsilon,\delta,\upDelta) > 0$
such that
\begin{equation}\label{eq:suf_cond_G2_to_zero}
\liminf\limits_{R \to \infty} \ \vec{P} \Bigl\{ \inf_{\delta \leq t
\leq \upDelta} \! \|\overline{{W}}^R(t)\|_1 \geq
C \Bigr\} \geq 1-\varepsilon,
\end{equation}
\item[\textup{(iii)}] ${G}^{2,R}(\cdot) \Rightarrow (0,\ldots,0)$ in $\mathcal{D}$ as $R \to \infty$.
\end{itemize}
\end{lemma}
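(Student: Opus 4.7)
The plan is to establish (i) first, bootstrap to (ii), and then deduce (iii) from (ii) by a direct estimate. The central computation is the one-step conditional drift of the total workload: given $W^R(n)=x$,
\begin{equation*}
\vec{E}\bigl[\|W^R(n+1)\|_1 - \|W^R(n)\|_1 \mid W^R(n)=x\bigr] = \|\upLambda\|_1 - h(\|x\|_1) - \frac{1}{R}\sum_{i=1}^K p_i\bigl(x_i - h(\|x\|_1)\,m_i(x)\bigr).
\end{equation*}
Since $h(y) \downarrow e^{-1}$ as the integer $y$ grows, for $M$ large and $\gamma < (\|\upLambda\|_1 - e^{-1})/p^{\ast}$ this drift is bounded below by a strictly positive constant $\alpha=\alpha(M,\gamma)$ on $\{M \leq \|x\|_1 \leq \gamma R\}$. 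On the low-workload set $\{\|x\|_1 < M\}$ the drift can be negative (e.g.\ $h(1)=1$), and Assumption~\ref{ass:nontrivial_arrivals} is needed to escape.

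For part~(i), I would split the trajectory into excursions in the low region $\{\|x\|_1 < M\}$ and the intermediate region $\{M \leq \|x\|_1 \leq \gamma R\}$. With probability $q:=\vec{P}\{\|A(1)\|_1 \geq 2\}>0$ each slot brings at least two customers, of which at most one is served while impatience removes only $O(M/R)$ in expectation when $\|x\|_1<M$; dominating by a simple biased random walk gives that the sojourn time in the low region is $O(1)$ with exponential tails, uniformly in $R$. In the intermediate region, the positive drift $\alpha$ combined with a Doob--Kolmogorov (or Azuma--Hoeffding) inequality for the zero-mean martingale part of $\|W^R(n)\|_1$ shows that the hitting time of level $\gamma R$ is at most $\gamma R/\alpha + O(\sqrt{R\log R})$ with probability tending to $1$. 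Choosing $\gamma$ small enough so that $\gamma/\alpha < \delta$ concludes~(i).

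For part~(ii), I would bootstrap from~(i): choosing $\gamma=\gamma(\delta,\varepsilon)$ appropriately gives $\|W^R(\lfloor R\delta \rfloor)\|_1 \geq \gamma R$ with probability $\geq 1 - \varepsilon/2$. Any subsequent down-crossing from $\gamma R$ to level $CR$ within a window of length $\upDelta R$ would require either a large deviation of the martingale part (controlled by $O(R^{-1})$ through Doob's maximal inequality) or an unusually long excursion in a region of non-negative drift together with at most $O(p^{\ast}\gamma\upDelta \cdot R)$ impatience losses; taking $C$ smaller than both $\gamma$ and $(\|\upLambda\|_1 - e^{-1})/p^{\ast}$ (guided by the fluid bound~\eqref{eq:lower_bound_norm}) delivers~\eqref{eq:suf_cond_G2_to_zero}. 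For part~(iii), I exploit that $h(y) \to e^{-1}$ uniformly on $[a,\infty)$ for every $a>0$. Fix $T>0$ and split the integrals defining $G^{2,R}(t)$ at time $\delta$: on $[0,\delta]$ the integrands are bounded (since $h\leq 1$ and the coordinates of $m$ sum to $1$), contributing $O(\delta)$; on $[\delta,T]$, by~(ii) we have $\|\overline{W}^R(s)\|_1 \geq C$ with high probability, so $|h(R\|\overline{W}^R(s)\|_1) - e^{-1}| \to 0$ uniformly in $s$. The corrections coming from $|\lfloor Rt \rfloor/R - t| \leq 1/R$ and from $(1,\ldots,1)-p/R \to (1,\ldots,1)$ are negligible. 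Sending $R \to \infty$, then $\varepsilon \to 0$, then $\delta \to 0$ yields~(iii).

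The main obstacle is part~(ii): it requires \emph{uniform-in-$t$} control of a lower bound on $\|\overline{W}^R(t)\|_1$, and a naive union bound over $O(R)$ time-steps is too loose. The remedy is a Doob-type maximal inequality applied to an appropriately stopped drift-adjusted supermartingale built from $\|W^R(n)\|_1$. A secondary difficulty, inside~(i), is that the positive-drift argument alone cannot handle $\|x\|_1$ small, so the escape from the low region must essentially use Assumption~\ref{ass:nontrivial_arrivals}.
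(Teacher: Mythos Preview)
Your high-level strategy matches the paper's: positive conditional drift on $\{M \le \|x\|_1 \le \gamma R\}$ plus an escape argument on $\{\|x\|_1 < M\}$ for (i); a ``no down-crossing after first passage'' argument for (ii); and the $[0,\delta]$/$[\delta,T]$ split for (iii). Part (iii) in your sketch is essentially identical to the paper's.

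The technical route for (i) and (ii) is genuinely different. The paper does \emph{not} work with $\|W^R(\cdot)\|_1$ directly in the multiclass case. Instead it first couples the multiclass model with a single-class model having reneging parameter $\widetilde p=\max_i p_i$ and shows pathwise that $\widetilde W^R(n)\le \|W^R(n)\|_1$; this reduces (i)--(ii) to the single-class case, where $W^R(\cdot)$ is itself Markov. Then, rather than martingale concentration, the paper builds an explicit minorant Markov chain $V_\gamma(\cdot)$ whose increments below level $N$ are $A(n)-1-B(n)$ (a Binomial correction) and above $N$ are $A(n)-\mathbf{1}\{U\le h^\ast\}-\theta_\gamma(n)$, with $\theta_\gamma$ a carefully chosen stochastic upper bound for $B(\lfloor\gamma R\rfloor,p/R)$ that is uniformly integrable in $\gamma$. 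Part (i) is then obtained from an excursion decomposition (hitting times $\tau^{(i)},\nu^{(i)}$ of level $N$) together with the SLLN for the random walk $Y_\gamma$, and (ii) from a reflected minorant $V^R(\cdot)$ at level $\lfloor\gamma R\rfloor$. The advantage of the paper's construction is that it uses only the SLLN and weak convergence of random-walk minima, hence only $\vec E\|A(1)\|_1<\infty$.

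This is where your sketch has a gap you should address. Your appeal to Azuma--Hoeffding needs bounded martingale differences, but the increments contain $\|A(n)\|_1$, which is not assumed bounded; and Doob's $L^2$ maximal inequality needs $\vec E\|A(1)\|_1^2<\infty$, which the paper does not assume. You can repair this (truncate $A$ and control the tail via the first moment, or replace concentration by an SLLN/functional-LLN argument for the compensated sum), but as written the step ``Doob--Kolmogorov (or Azuma--Hoeffding)'' is not justified under the paper's hypotheses. A second, smaller point: in the multiclass case $\|W^R(\cdot)\|_1$ is not Markov, so your ``drift-adjusted supermartingale'' in (ii) must be built with respect to the full filtration of $W^R(\cdot)$, not just $\|W^R(\cdot)\|_1$; your drift bound $\sum_i p_i x_i \le p^\ast\|x\|_1$ is exactly what makes this go through, but it should be stated.
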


\begin{lemma} \label{lem:G3_martingale_to_zero}
Suppose $\overline{{W}}^R(0) \Rightarrow {z}^0$ as $R \to \infty$. Then $
{G}^{3,R}(\cdot) \Rightarrow (0,\ldots,0)$ and $\overline{{M}}^R(\cdot) \Rightarrow (0,\ldots,0)$ in $\mathcal{D}$ as $R \to \infty$.
\end{lemma}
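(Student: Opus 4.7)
The proof rests on a crude pathwise bound: since ${T}$ and ${I}^R$ are coordinate-wise non-negative, iterating \eqref{eq:wl_repr} gives $W^R(n) \le W^R(0) + \sum_{i=1}^{n} A(i)$ coordinate-wise. Combined with the hypothesis $\overline{{W}}^R(0)\Rightarrow {z}^0$ and the SLLN for the i.i.d.\ sequence $\{A(i)\}$, this yields, for every $T>0$, stochastic boundedness of $\sup_{t\le T}\|\overline{{W}}^R(t)\|_1$ as $R\to\infty$ (in fact an almost sure uniform upper limit depending only on $\|{z}^0\|_1$ and $\|\upLambda\|_1 T$). This a priori bound drives the rest of the argument.

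Given it, the claim for $G^{3,R}$ is immediate: $\overline{{W}}^R(\cdot)$ is piecewise constant on the intervals $[k/R,(k+1)/R)$ and $G^{3,R}(t)$ integrates it over a sub-interval of length at most $1/R$, so $\|G^{3,R}(t)\|\le\|{p}\|\cdot\|\overline{{W}}^R(t)\|/R$, and the preliminary bound forces $\sup_{t\le T}\|G^{3,R}(t)\|\to 0$ in probability, equivalently weakly in $\mathcal{D}$ by Remark~\ref{rem:skorokhod}. For $\overline{{M}}^R$ I decompose it as $\overline{{M}}^{R,A}+\overline{{M}}^{R,T}+\overline{{M}}^{R,I}$, corresponding to the three centered summands in the expression for $M^R(n)$ given after~\eqref{eq:workload}; each piece is separately a martingale with respect to the natural filtration generated by ${W}^R(0)$ and the stochastic primitives, by a routine conditioning check that uses the independence structure listed after~\eqref{eq:wl_repr}. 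The arrival piece $\overline{{M}}^{R,A}(t)=R^{-1}\sum_{i=1}^{\lfloor Rt\rfloor}({A}(i)-\upLambda)$ converges to zero uniformly on compacts by the functional SLLN for i.i.d.\ partial sums (the assumed finite first moment of $A(1)$ suffices). For the transmission piece, Remark~\ref{rem:total_transmissions} makes the vector-valued increments uniformly bounded, hence orthogonality of martingale differences gives $\vec{E}\|M^{R,T}(\lfloor RT\rfloor)\|^2\le C\,RT$, and Doob's maximal inequality delivers $\vec{P}(\sup_{t\le T}\|\overline{{M}}^{R,T}(t)\|>\varepsilon)=O(R^{-1})$.

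The main obstacle is the impatience piece $\overline{{M}}^{R,I}$, whose conditional increment variance is of order $\|W^R(i)\|_1/R$ rather than $O(1)$, so not uniformly bounded. I would handle this by localisation: set $\tau^R=\inf\{n:\|W^R(n)\|_1>\gamma R\}\wedge\lfloor RT\rfloor$ with $\gamma$ chosen strictly larger than $\|{z}^0\|_1+\|\upLambda\|_1 T$, so that the crude bound above makes $\vec{P}(\tau^R<\lfloor RT\rfloor)\to 0$. On the stopped martingale the per-step conditional variance is bounded by $\|{p}\|\gamma$, so its total quadratic variation up to $\lfloor RT\rfloor$ is $O(R)$; dividing by $R^2$ and applying Doob's inequality shows that the fluid-scaled stopped martingale converges to zero uniformly on $[0,T]$ in probability, while the localisation error vanishes by construction. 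Summing the three weak limits gives $\overline{{M}}^R\Rightarrow(0,\dots,0)$ in $\mathcal{D}$, which together with the $G^{3,R}$ bound completes the proof.
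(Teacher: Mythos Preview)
Your proof is correct and follows the same architecture as the paper's: the crude pathwise bound $W^R(n)\le W^R(0)+\sum_{i\le n}A(i)$, the immediate estimate for $G^{3,R}$ from the piecewise-constant structure, and the three-way decomposition of the martingale into arrival, transmission and impatience pieces, with the first handled by the functional SLLN and the second by bounded increments plus Doob.

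The only substantive difference is in the impatience piece. The paper controls it via an exponential Markov (Chernoff-type) bound: it truncates the workload at $M(\delta)N$, bounds the moment generating function of each Bernoulli increment, and then takes a union bound over the $N$ time steps. Your localisation-plus-$L^2$ argument is more elementary and arguably cleaner: stopping at the first exit from $\{\|W^R\|_1\le\gamma R\}$ makes the per-step conditional variance $O(1)$, so the quadratic variation is $O(R)$ and Doob's $L^2$ maximal inequality gives the $O(R^{-1})$ rate directly, without exponential moments or a union bound. The paper's exponential bound would yield sharper tail estimates if one needed them, but for the weak convergence statement your second-moment route suffices and is shorter. One small point: your choice of a fixed $\gamma>\|z^0\|_1+\|\upLambda\|_1T$ tacitly uses that $z^0$ is deterministic (which is the setting of Theorem~\ref{th:main}); for a possibly random $z^0$ you would choose $\gamma=\gamma(\delta)$ via tightness of $\overline{W}^R(0)$, exactly as the paper does with its $M(\delta)$.
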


Now we are in a position to prove the theorem. The proof consists of two steps. First, we establish relative compactness of a~sequence 
$\{\overline{W}^R(\cdot); \ {R \to \infty} \}$ such that $\overline{{W}}^R(0) \Rightarrow {z}^0$ (note that the assumption of ${z}^0$
being deterministic is not essential here, it is used in the next step). Second, we show that all weak limits of such a~sequence a.s. coincide with the FMS
starting from $ {z}^0 $, which means convergence of the sequence to the FMS starting from $ {z}^0 $.

\textit{Relative compactness.} We apply the following known result (see \cite[Chapter~3, Corollary~7.4 and Theorem~10.2(a)]{EthierKurtz}).
\begin{proposition} \label{tightness}
Let $\{ {X}^n(\cdot); \ n \geq 1 \}$ be a~sequence of processes with sample paths in $\mathcal{D}$. Suppose that, for any $\eta > 0$ and $t
\geq 0$, there exists a~compact set $\upGamma_{\eta,t} \subset \mathbb{R}^K$ such that
$$
\liminf_{n \to \infty} \vec{P} \{ {X}^n(t) \in \upGamma_{\eta,t} \} \geq 1 - \eta.
$$
Suppose further that, for any $\eta > 0$ and $T > 0$, there exists a~$\delta > 0$ such that
$$
\limsup_{n \to \infty} \vec{P} \{ \omega( {X}^n, \delta, T ) \geq \eta \} \leq \eta,
$$
where $\omega( {x}, \delta, T ):=\max_{1 \leq i \leq
K}\sup\{ |x_i(s)-x_i(t)| : s,t \in [0,T], |t-s|< \delta \}$. Then the sequence $\{ {X}^n(\cdot); \ n \geq 1\}$ is relatively compact and all its weak limit points are a.s. continuous.
\end{proposition}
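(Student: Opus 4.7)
The plan is to deduce relative compactness from Prokhorov's theorem by establishing tightness of the sequence, and then separately to verify that weak limit points have continuous sample paths. Since $\mathcal{D}$ endowed with the Skorokhod $J_1$-topology is a Polish space, Prokhorov identifies relative compactness of laws with tightness, so the task reduces to verifying tightness plus the continuity claim.

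For tightness, I would invoke the Billingsley--Ethier--Kurtz characterisation in $\mathcal{D}([0,\infty),\mathbb{R}^K)$: a sequence is tight iff for each $t\geq 0$ the marginals $\{X^n(t)\}$ are tight in $\mathbb{R}^K$, and for each $\eta>0$ and $T>0$ there exists $\delta>0$ with $\limsup_n \vec{P}\{w'(X^n,\delta,T)>\eta\}\leq\eta$, where $w'$ is the Skorokhod modulus (an infimum over finite partitions of $[0,T]$ of the maximal oscillation on each partition piece). The first hypothesis supplies marginal tightness immediately via the compact sets $\upGamma_{\eta,t}$. Since $w'(x,\delta,T)\leq\omega(x,\delta,T)$ pointwise on $\mathcal{D}$, the hypothesis on the uniform modulus $\omega$ a fortiori yields control on $w'$, giving tightness and hence relative compactness via Prokhorov.

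For continuity of weak limit points, I would fix a subsequence along which $X^{n_k}\Rightarrow X$ in $\mathcal{D}$ and combine Skorokhod's representation theorem with an upper-semicontinuity argument. Specifically, I would realise $X^{n_k}\to X$ almost surely in the $J_1$-topology on a common probability space, then use that the level sets $\{x\in\mathcal{D}:\omega(x,\delta,T)\leq\eta\}$ are well-behaved under Skorokhod convergence to functions with small jumps. Portmanteau (applied to an essentially closed level set) then yields $\vec{P}\{\omega(X,\delta,T)\leq\eta\}\geq 1-\eta$ for the chosen $\delta$. Choosing $\delta_j\downarrow 0$ and $\eta_j\downarrow 0$ along a summable diagonal and invoking Borel--Cantelli forces $\omega(X,\delta_j,T)\to 0$ almost surely, which is precisely uniform continuity on $[0,T]$; since $T$ is arbitrary, $X$ has continuous paths a.s.

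The main obstacle is reconciling the uniform modulus $\omega$ with the $J_1$-topology: Skorokhod convergence allows small time re-parametrisations, so naive level sets of $\omega$ are not obviously closed, and the hypothesis bounds $\omega$ of each $X^n$ rather than $w'$. The resolution is the well-known fact that whenever a Skorokhod-convergent sequence admits a continuous limit, convergence is automatically uniform on compact time intervals, so the bounds on $\omega(X^n,\delta,T)$ transfer cleanly to $\omega(X,\delta,T)$ along the representing sequence. This closes the gap between the uniform modulus control furnished by the hypothesis and the statement about the limiting process, completing the argument.
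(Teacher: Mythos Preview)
The paper does not prove this proposition: it is stated as a known result with a reference to Ethier and Kurtz, \emph{Markov Processes: Characterization and Convergence}, Chapter~3, Corollary~7.4 and Theorem~10.2(a). There is therefore no proof in the paper to compare against.

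Your outline for relative compactness is the standard one and is correct: compact containment of marginals plus the domination of the Skorokhod modulus $w'$ by the uniform modulus $\omega$ (up to a harmless adjustment in $\delta$) gives the Ethier--Kurtz tightness criterion directly, and Prokhorov finishes.

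The continuity argument, however, contains a genuine circularity. You correctly identify the obstacle---sublevel sets of $\omega(\cdot,\delta,T)$ are not obviously $J_1$-closed---and then propose to resolve it by invoking the fact that $J_1$-convergence to a \emph{continuous} limit upgrades to locally uniform convergence. But continuity of the limit $X$ is precisely what you are trying to establish, so this step assumes its own conclusion. The standard non-circular route (and the content of Theorem~10.2 in Ethier--Kurtz) is to pass through the maximal-jump functional: the map
\[
x\ \longmapsto\ \int_0^\infty e^{-u}\Bigl(1\wedge\sup_{0<t\le u}\|x(t)-x(t-)\|\Bigr)\,du
\]
is continuous on $\mathcal{D}$ in the $J_1$-topology, is dominated by $\omega(x,\delta,\cdot)$ for every $\delta>0$, and vanishes exactly on the set of continuous paths. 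Continuous mapping plus the hypothesis then forces this functional to vanish a.s.\ for any weak limit $X$, which yields continuity without appealing to it in advance.
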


Consider a~sequence $\{ \overline{W}^R(\cdot); \ {R \to \infty} \}$ such that $\overline{{W}}^R(0) \Rightarrow {z}^0$. By
\eqref{eq:wl_repr}, we have
$
{0} \leq \overline{{W}}^R(t) \leq \overline{{W}}^R(0) + \sum_{n=1}^{\lfloor Rt \rfloor} {A}(n)/R \Rightarrow {z}^0 + t
{\upLambda}
$,
and the first condition of the proposition holds. Now we check the second condition. Equation~\eqref{eq:scaled_workload_final} implies, for $t>s$,
$i=1,\ldots,K$,
\begin{equation*}
\begin{split}
\overline{W}^R_i(t)-\overline{W}^R_i(s)=& \ \upLambda_i(t-s)-e^{-1} \!\! \int_s^t \!\! m_i\left( \overline{{W}}^R(x)
\right) dx \\
&- p_i \!\! \int_s^t \!
\overline{W}_i^R (x) \,dx 
+ G_i^R(t) - G_i^R(s).
\end{split}
\end{equation*}
Since $m_i(\cdot) \leq 1$, we have, for $t,s \in [0,T]$, $|t-s|<\delta$,
\begin{equation}\label{eq:t-s}
|\overline{W}^R_i(t)-\overline{W}^R_i(s)|\leq\upLambda_i\delta+e^{-1}\delta+p_i
\delta \|\overline{W}^R_i\|_{[0,T]}+2\|G_i^R\|_{[0,T]}.
\end{equation}
Equation~\eqref{eq:scaled_workload_final} also implies the following upper bound for the workload
process:
\begin{equation} \label{eq:workload_upper_bound}
\overline{W}^R_i(t) \leq \overline{W}^R_i(0)+\upLambda_i t +
G_i^R(t) \leq \|\overline{{W}}^R(0)\|+\|{\upLambda}\|
T + \|G_i^R\|_{[0,T]}.
\end{equation}
By~\eqref{eq:t-s} and~\eqref{eq:workload_upper_bound},
\begin{equation*}
\begin{split}
\omega( \overline{{W}}^R(\cdot), \delta, T) \leq
(\|{\upLambda}\| +e^{-1})\delta \\ +\|{p}\|\delta\left(
\|\overline{{W}}^R(0)\|+\|{\upLambda}\| T+
\|{G}^R\|_{[0,T]}\right)
+2\|{G}^R\|_{[0,T]},
\end{split}
\end{equation*}
which implies that the second condition of Proposition~\ref{tightness} holds, since ${\overline{{W}}^R(0) \Rightarrow
{z}^0}$ and, by Remark~\ref{rem:skorokhod},
$\|{G}^R\|_{[0,T]} \Rightarrow 0$.

\textit{Limiting equations.} Now we show that, if
$\{\overline{W}^R(\cdot); \ R \to \infty \}$
converges weakly in $\mathcal{D}$, then its limit point
$\widetilde{{W}}(\cdot)$ a.s.
\begin{itemize}
\item[(i)] is continuous,
\item[(ii)] does not vanish at $ t > 0 $,
\item[(iii)] satisfies the fluid model equation~\eqref{eq:int_fluid_model}.
\end{itemize}
Then, by uniqueness of FMS's, $\widetilde{{W}}(\cdot)$ a.s. coincides with the FMS starting from ${z}^0$.

(i) Continuity of $\widetilde{{W}}(\cdot)$ follows from Proposition~\ref{tightness}.

(ii) By Lemma~\ref{lem:G2_to_zero}, for any $\varepsilon>0 $ and $n \geq 1$, there exists a~constant ${C(\varepsilon,1/n,n) > 0}$ such that
$
\liminf_{R \to \infty} \vec{P} \{ \inf_{1/n
\leq t \leq n} \| \overline{{W}}^R(t) \|_1 \geq
C(\varepsilon,1/n,n) \} \geq 1-\varepsilon.
$
In the interval~$(0,C(\varepsilon,1/n,n)]$, choose a continuity point $\widetilde{C}(\varepsilon,1/n,n)$ for the
distribution of $\inf_{1/n \leq t \leq n} \|
\widetilde{{W}}(t) \|_1$. The mapping $x(\cdot) \to \inf_{1/n \leq t \leq n} \left\| {x}(t)
\right\|$ is continuous in $\mathcal{D}$; hence,
\begin{align*} 
1 - \varepsilon &\leq \lim\limits_{R \to \infty} \vec{P} \Bigl\{ \inf\limits_{1/n \leq t \leq n} \|
\overline{{W}}^R(t) \|_1 \geq
\widetilde{C}(\varepsilon,1/n,n) \Bigr\}\\
&= \vec{P} \Bigl\{ \inf\limits_{1/n \leq t \leq n} \| \widetilde{{W}}(t) \|_1
\geq \widetilde{C}(\varepsilon,1/n,n) \Bigr\} 
\leq \vec{P} \Bigl\{ \inf\limits_{1/n \leq t \leq n} \| \widetilde{{W}}(t) \|_1
> 0 \Bigr\} .
\end{align*}
By taking the limits as $\varepsilon \to 0$ in the last inequality, we obtain $ \vec{P} \{ \inf_{1/n \leq t \leq n} \| \widetilde{{W}}(t) \|_1
> 0 \}  = 1$ for any $n \geq 1$, which, in turn, implies that
\begin{equation}\label{eq:bounded_away_from_zero}
\vec{P} \Bigl\{ \| \widetilde{{W}}(t) \|_1
> 0 \text{ for all } t > 0 \Bigr\}=1.
\end{equation}

(iii) Fix $t \geq 0$. We introduce the mappings
$\varphi^1_t,\varphi^2_t \colon \mathcal{D} \to
\mathbb{R}^K$ defined by $\varphi^1_t({x})={x}(t) + {p \ast} \int_0^t  {x}(s) ds$,
$\varphi^2_t({x})=e^{-1} \int_0^t 
{m}({x}(s)) ds$. By Remark~\ref{rem:skorokhod}, the mapping
$\varphi^1_t$ is continuous at any continuous ${x}(\cdot)$. By $m_i(\cdot) \leq 1$, $i=1,\ldots,K$, and the dominated convergence theorem, the mapping
$\varphi^2_t$ is
continuous at any continuous ${x}(\cdot)$ that
differs from zero everywhere except the points forming a
set of zero Lebesgue measure. Then, by continuity of
$\widetilde{{W}}(\cdot)$, \eqref{eq:bounded_away_from_zero} and the
continuous mapping theorem, we have
\begin{equation*}
\varphi^1_t(\overline{{W}}^R)+\varphi^2_t(\overline{{W}}^R)
\Rightarrow \varphi^1_t(\widetilde{{W}})+\varphi^2_t(\widetilde{{W}}).
\end{equation*}
On the other hand, by\eqref{eq:scaled_workload_final},
$$
\varphi^1_t(\overline{{W}}^R)+\varphi^2_t(\overline{{W}}^R)=\overline{{W}}^R(0)+t
{\upLambda}+{G}^R(t) \Rightarrow {z}^0+t
{\upLambda}.
$$
Since ${z}^0+t
{\upLambda}$ is deterministic, we have, for any fixed $t \geq 0$,
$$
\vec{P} \Bigl\{  \widetilde{{W}}(t)={z}^0+t
{\upLambda}-e^{-1}\!\!\int_0^t\!\! {m}\left(\!
\widetilde{{W}}(s) \!\right)\!ds-{p
\cdot}\!\!\int_s^t\!\!
\widetilde{{W}}(s)ds \Bigr\}=1.
$$
Let $\upOmega_t$ denote the event in the last equality. Then, again due to continuity of $\widetilde{{W}}(\cdot)$ and~\eqref{eq:bounded_away_from_zero},
\begin{equation*}
\quad \vec{P} \Bigl\{ \widetilde{{W}}(\cdot) \text{ satisfies~\eqref{eq:int_fluid_model} in } [0,\infty) \Bigr \} = \vec{P} \Bigl \{ \bigcap
\upOmega_t \text{ over all rational } t \in [0,\infty) \Bigr\} =1.
\end{equation*}

\subsection{Proof of Lemma~\ref{lem:G2_to_zero}} \label{ssec:proof_lemma3}
We split the proof into four parts. In the first two parts, we show that Assumptions~\ref{ass:nontrivial_arrivals} and~\ref{ass:overload} imply~(i), and that~(i) implies~(ii), both for the single class case. In the third part, we show that the total workload $\| {W}(\cdot) \|_1$ of a~multiclass model is bounded
from below by that of a~single class model with suitable parameters. Then~(i) and~(ii) hold for the multiclass case, too. In the last part, we show that~(ii)
implies~(iii).

\paragraph{Assumptions~\ref{ass:nontrivial_arrivals} and~\ref{ass:overload} imply~(i), single class case}

For every $\gamma$ from an~interval $(0,\gamma^{\, \ast}]$, we construct a~Markov chain (see $\{ V_\gamma(n); \ n \geq 0 \}$ below) that, for all $R$ large enough, is a~lower bound for the workload process until the latter first hits the set $[\gamma R, \infty)$. Then we choose a~$\gamma$ so as to have (i) with $\{ V_\gamma(n); \ n
\geq 0 \}$ in place of $\{ W^R(n); \ n \geq 0 \}$, and this completes the proof.

Throughout the proof, $\delta$ and $\varepsilon$ are fixed.

Without loss of generality, we can assume that, for all $R$, a.s. $W^R(0)=0$. Indeed, 
\begin{equation}\label{eq:monotonicity}
\text{for all } n \text{ and } x \geq y, \quad x-T(n,x)-I^R(n,x)
\geq y-T(n,y)-I^R(n,y) \text{ a.s.}
\end{equation}
Property~\eqref{eq:monotonicity} says that the process
$\{ W^R(n); \ n \geq 0 \}$ admits path-wise monotonicity: the bigger is the initial value $W^R(0)$, the bigger are all the other values $W^R(n)$, $n \geq 1$.

Further we make preparations needed to construct the lower-bound Markov chains. 
Let
\begin{equation} \label{eq:h,N}
h^\ast=e^{-1}+(\upLambda-e^{-1})/2  \quad \text{and} \quad B(k,1/k)(\{1\}) \leq h^\ast, \quad k \geq N.
\end{equation}
Let $\{ B(n); \ n \geq 1 \}$ be an~i.i.d. sequence with $B(1) \sim B(N,p)$.

We apply the following proposition (see the Appendix for the proof) with $a=(\upLambda-e^{-1})/4$.
\begin{proposition} \label{binomial_uniform_bound}
For any $a>0$, there exists a~$\gamma^{\, \ast}=\gamma^{\, \ast}(a)$ and a~family of r.v.'s $\{ \theta_\gamma; \ 0 \leq \gamma \leq \gamma^\ast \}$ with the following
properties:
\begin{itemize}
\item[\textup{(i)}] the family $\{ \theta_\gamma; \ 0 \leq \gamma \leq \gamma^{\, \ast} \}$ is uniformly integrable;
\item[\textup{(ii)}] for any $\gamma \in [0,\gamma^{\, \ast}]$, $\vec{E} \theta_\gamma \leq a$;
\item[\textup{(iii)}] $\theta_\gamma \Rightarrow \theta_{\, 0}$ as $\gamma \to 0$;
\item[\textup{(iv)}] for any $\gamma \in (0,\gamma^{\, \ast}]$, there exists an~$R_\gamma$ such that $\theta_\gamma \geq_{\textup{st}} B_\gamma^R \sim B(\lfloor \gamma R \rfloor,
p/R)$, $R \geq R_\gamma$.
\end{itemize}
\end{proposition}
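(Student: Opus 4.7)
My plan is to take $\theta_\gamma$ to be a Poisson random variable whose mean strictly exceeds the asymptotic mean $\gamma p$ of $B(\lfloor \gamma R\rfloor, p/R)$. Concretely, set $\gamma^\ast := a/(2p)$ and $\theta_\gamma \sim \upPi(2\gamma p)$ for $\gamma \in (0, \gamma^\ast]$ (with $\theta_0 \equiv 0$). Properties (i)--(iii) are then routine: $\sup_\gamma \vec{E}\theta_\gamma^2 = 2\gamma^\ast p + (2\gamma^\ast p)^2 < \infty$ gives uniform integrability; $\vec{E}\theta_\gamma = 2\gamma p \leq a$ by the choice of $\gamma^\ast$; and $\theta_\gamma \Rightarrow 0 = \theta_0$ as $\gamma \to 0$ since the Poisson mean vanishes.

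The substantive work is (iv). My plan is to use the following Bernoulli/Poisson stochastic ordering as a lemma:
\[
B(1, q) \leq_{\text{st}} \upPi\bigl(-\ln(1-q)\bigr), \qquad q \in (0, 1).
\]
This is verified by directly comparing tails: $\vec{P}\{B(1, q) = 0\} = 1 - q = e^{\ln(1-q)} = \vec{P}\{\upPi(-\ln(1-q)) = 0\}$, so the masses at level $0$ coincide, forcing $\vec{P}\{B(1,q) \geq 1\} = \vec{P}\{\upPi(-\ln(1-q)) \geq 1\}$, while $B(1, q)$ places no mass on $\{2, 3, \dots\}$, trivially yielding the tail inequality at all higher levels.

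Since $\leq_{\text{st}}$ is preserved under independent summation, applying the lemma with $q = p/R$ to each of $\lfloor \gamma R\rfloor$ i.i.d.\ summands of $B(\lfloor \gamma R\rfloor, p/R)$ yields
\[
B(\lfloor \gamma R\rfloor, p/R) \leq_{\text{st}} \upPi\bigl(-\lfloor \gamma R\rfloor \ln(1 - p/R)\bigr).
\]
A Taylor expansion gives $-\lfloor\gamma R\rfloor \ln(1 - p/R) = \gamma p + O(1/R)$ as $R \to \infty$, which is strictly smaller than $2\gamma p$ for $R \geq R_\gamma$ sufficiently large (depending on $\gamma$). Combining this with the obvious monotonicity of $\upPi(\mu)$ in $\mu$ under $\leq_{\text{st}}$, I conclude $B(\lfloor \gamma R\rfloor, p/R) \leq_{\text{st}} \upPi(2\gamma p) = \theta_\gamma$ for all $R \geq R_\gamma$, establishing (iv).

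The main obstacle I expect is the stochastic ordering step for Bernoulli versus Poisson: the naive choice of Poisson parameter $q$ would not work (indeed, $B(1,q) \geq_{\text{st}} \upPi(q)$), and the key point is the precise choice $-\ln(1-q) > q$, which makes the masses at $0$ coincide so that the tail comparison trivially propagates. Once this ordering is in hand, the rest is standard bookkeeping: monotonicity of $\leq_{\text{st}}$ under sums and under the Poisson parameter, plus a Taylor estimate.
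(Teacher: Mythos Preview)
Your proof is correct and takes a genuinely different (and cleaner) route than the paper. The paper constructs each $\theta_\gamma$ by hand in two regimes: for large values $n>N^\ast$ it uses a uniform Chernoff-type bound $e^{n}\vec{P}\{B_\gamma^R\ge n\}\le \vec{E}e^{B_\gamma^R}\le e^{(e-1)p}=:\mu$, giving the tail $\vec{P}\{\theta_\gamma\ge n\}=\mu e^{-n}$; for small values $n\le N^\ast$ it invokes the weak convergence $B(\lfloor\gamma R\rfloor,p/R)\Rightarrow\upPi(\gamma p)$ to bound point masses by twice the limiting Poisson mass, and then stitches the two pieces into an explicit distribution. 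Your approach sidesteps this patchwork via a single stochastic-order lemma, $B(1,q)\le_{\text{st}}\upPi(-\ln(1-q))$, which is preserved under independent summation and, combined with Poisson monotonicity in the parameter, delivers $B(\lfloor\gamma R\rfloor,p/R)\le_{\text{st}}\upPi(2\gamma p)=\theta_\gamma$ for all large $R$ in one line. Your construction yields a simple explicit family ($\theta_\gamma\sim\upPi(2\gamma p)$) and actually gives a $\gamma$-independent threshold $R_\gamma$ (e.g.\ any $R>2p$ works, using $-\ln(1-x)\le x/(1-x)$), though you did not need this. The paper's approach is more elementary in the sense that it only uses Chernoff bounds and weak convergence rather than the Bernoulli--Poisson coupling, but it is noticeably more cumbersome.
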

For $\gamma \in (0,\gamma^{\, \ast}]$, let $\{ \theta_\gamma(n); \ n \geq 1 \}$ be an~i.i.d. sequence with $\theta_\gamma(1) \equal\limits^{\text{d}} \theta_\gamma$,
and assume that this sequence does not depend on ${\{ B(n) ; \ n \geq 1 \}}$.

Now we construct the lower-bound Markov chains. For $R$ large enough, given $W^R(n) < N$, 
\begin{equation}\label{eq:<N}
W^R(n+1) - W^R(n) \greaterequal\limits^{\text{a.s.}} A(n+1) -
1 - \sum_{i=1}^N \xi^R(n,i) \geq_{\text{st}} A(1) - 1 - B(1),
\end{equation}
and, given $N \leq W^R(n) < \gamma R$,
\begin{equation}\label{eq:<aR}
\begin{split}
W^R(n+1) - W^R(n) &\greaterequal\limits^{\text{a.s.}} A(n+1)
- \vec{I} \{ U(n) \leq h^\ast \} - \sum_{i=1}^{\lfloor \gamma R \rfloor} \xi^R(n,i) \\
&\geq_{\text{st}} A(1) - \vec{I} \{ U(0) \leq h^\ast \} - \theta_\gamma.
\end{split}
\end{equation}

Introduce the two i.i.d.
sequences:
\begin{align} \label{eq:y_after_N}
x(n) &= A(n) - 1 - B(n), \quad n
\geq 1, \nonumber \\
y_\gamma(n) &= A(n) - \vec{I} \{ U(n-1) \leq h^\ast \} -
\theta_\gamma(n), \quad n \geq 1, 
\end{align}
and the two auxiliary Markov Chains:
\begin{align*}
V^R_\gamma(0) &= 0,\\
V^R_\gamma(n+1) &= \left\{
\begin{array}{ll}
\max \{ 0, V^R_\gamma(n) + x(n+1) \}& \text{if }V^R_\gamma(n) < N,\\
\max \{ 0, V^R_\gamma(n) + y_\gamma(n+1) \}& \text{if }N \leq V^R_\gamma(n) < \gamma R,\\
\begin{array}{l} V^R_\gamma(n) + A(n+1) \\ - T\left(n,V^R_\gamma(n)\right) -
I^R\left(n,V^R_\gamma(n)\right) \end{array}& \text{if }V^R_\gamma(n) \geq \gamma R,
\end{array}
\right. \\
V_\gamma(0) &=0,\\
V_\gamma(n+1) &= \left\{
\begin{array}{ll}
\max \{ 0, V_\gamma(n) + x(n+1) \}& \text{if }V_\gamma(n) < N,\\
\max \{ 0, V_\gamma(n) + y_\gamma(n+1) \}& \text{if
} V_\gamma(n) \geq N.
\end{array}
\right.
\end{align*}

Put $\psi^R_\gamma(\gamma R)$  and $\psi_\gamma(\gamma R)$ to be the first hitting
times of the set $[\gamma R,\infty)$ for the processes $\{V^R_\gamma(n); \ n
\geq 0\}$ and $\{V_\gamma(n); \ n \geq 0\}$, respectively. Then $\psi^R_\gamma(\gamma R) = \psi_\gamma(\gamma R)$ for all $R$.

The processes $ \{V^R_\gamma(n); \ n \geq 0 \}$ and
$\{ W^R(n); \ n \geq 0 \}$ are related in the following
way: $V^R_\gamma(n)=x$, $W^R(n)=y$, where $x \leq y$, implies
$
W^R(n+1)\geq_{\text{st}}V^R_\gamma(n+1).
$
Indeed, due to inequalities \eqref{eq:monotonicity}, \eqref{eq:<N} and~\eqref{eq:<aR},
\begin{align*}
W^R(n+1) &= y + A(n+1) - T\left(n,y\right) -
I^R\left(n,y\right) \greaterequal\limits^{\text{a.s.}} x + A(n+1)
- T\left(n,x\right) - I^R\left(n,x\right)\\
& \quad \left\{
\begin{array}{ll}
\geq_{\text{st}} \max\{0,x + x(n+1)\}=V^R_\gamma(n+1)& \text{if } x < N,\\
\geq_{\text{st}} \max\{0,x + y(n+1)\}=V^R_\gamma(n+1)& \text{if } N \leq x < \gamma R,\\
= V^R_\gamma(n+1)& \text{if } x \geq \gamma R.
\end{array}
\right.
\end{align*}

Then we get 
$
\varphi^R(\gamma R) \geq_{\text{st}} \psi^R_\gamma(\gamma R) = \psi_\gamma(\gamma R)
$
as a~consequence of the following result (see the Appendix for the
proof).
\begin{proposition}\label{coupling_markov_chains}
Suppose $\{X(n); \ n \geq 0\}$ and $\{Y(n); \ n \geq 0$\} are Markov
chains with a~common state space $\mathcal{S}$, where $\mathcal{S}$ is a~closed subset of $\mathbb{R}$, and deterministic initial states $X(0) \leq Y(0)$.
Suppose also that, for any $x \leq y$ and any $z$,
\[
\vec{P}\{
X(n+1)\geq z \big| X(n)=x \} \leq \vec{P}\{ Y(n+1)\geq z \big|
Y(n)=y \}.
\]
Then there exist Markov Chains $\{\widetilde{X}(n); \ n \geq 0\}$
and $\{\widetilde{Y}(n); \ n \geq 0\}$ defined on a~common
probability space, distributed as $\{X(n); \ n \geq 0\}$ and 
$ \{Y(n); \ n \geq 0\} $, respectively, and such that $\widetilde{X}(n) \leq
\widetilde{Y}(n) \text{ a.s.}$ for all $n$.
\end{proposition}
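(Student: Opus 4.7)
\bigskip

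\noindent\textbf{Proof proposal for Proposition~\ref{coupling_markov_chains}.} The plan is to build the coupling step by step, driving both chains with a common sequence of auxiliary uniform random variables via the inverse-CDF (Skorokhod) representation. The underlying one-step fact is the following classical observation: if $F$ and $G$ are distribution functions on $\mathbb{R}$ with $F(t) \geq G(t)$ for every $t$, and $U$ is uniform on $[0,1]$, then the generalised inverses $F^{-1}(u) := \inf\{s \in \mathbb{R} \colon F(s) \geq u\}$ and $G^{-1}(u)$ satisfy $F^{-1}(U) \sim F$, $G^{-1}(U) \sim G$, and $F^{-1}(U) \leq G^{-1}(U)$ almost surely. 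The hypothesis of the proposition is precisely that, whenever $x \leq y$, the one-step transition laws $F_x(\cdot) := \vec{P}\{X(n+1) \leq \cdot \mid X(n) = x\}$ and $G_y(\cdot) := \vec{P}\{Y(n+1) \leq \cdot \mid Y(n) = y\}$ satisfy $F_x \geq G_y$ pointwise (these do not depend on $n$ by time-homogeneity; if the chains are not time-homogeneous, one indexes the families by $n$ as well).

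First I would set $\bigl(\widetilde{X}(0),\widetilde{Y}(0)\bigr) = \bigl(X(0),Y(0)\bigr)$ on some probability space carrying an i.i.d.\ sequence $\{U(n); \ n \geq 1\}$ of $[0,1]$-uniform random variables, independent of $\bigl(\widetilde{X}(0),\widetilde{Y}(0)\bigr)$. Then I would define recursively, for $n \geq 0$,
\[
\widetilde{X}(n+1) = F_{\widetilde{X}(n)}^{-1}\bigl(U(n+1)\bigr), \qquad \widetilde{Y}(n+1) = G_{\widetilde{Y}(n)}^{-1}\bigl(U(n+1)\bigr).
\]
Inductively, on the event $\{\widetilde{X}(n) \leq \widetilde{Y}(n)\}$ the single-step fact above, applied with $F = F_{\widetilde{X}(n)}$ and $G = G_{\widetilde{Y}(n)}$, yields $\widetilde{X}(n+1) \leq \widetilde{Y}(n+1)$ almost surely; since the base case $\widetilde{X}(0) \leq \widetilde{Y}(0)$ holds by assumption, the coupling inequality propagates to every $n$.

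Next I would verify the distributional claims. Since $U(n+1)$ is independent of $\bigl(\widetilde{X}(0),\ldots,\widetilde{X}(n),\widetilde{Y}(0),\ldots,\widetilde{Y}(n)\bigr)$ and $\widetilde{X}(n+1)$ is a measurable function of $\widetilde{X}(n)$ and $U(n+1)$, the conditional law of $\widetilde{X}(n+1)$ given the whole past of $\widetilde{X}$ coincides with $F_{\widetilde{X}(n)}$; in particular $\{\widetilde{X}(n)\}$ is a Markov chain with the same transition kernel and the same initial state as $\{X(n)\}$, and is therefore equal to $\{X(n)\}$ in distribution. The same argument applied to $\widetilde{Y}$ yields the analogous statement for $\{Y(n)\}$.

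The only real subtlety, and the place I expect to spend a little care, is making sure the inverse-CDF construction genuinely lives in the state space $\mathcal{S}$: because $\mathcal{S}$ is closed in $\mathbb{R}$ and $F_x$, $G_y$ are concentrated on $\mathcal{S}$, the infima defining $F_x^{-1}$ and $G_y^{-1}$ are attained in $\mathcal{S}$, so no measurability issue arises. Measurability of $(x,u) \mapsto F_x^{-1}(u)$ is standard for a Borel family of transition distributions on $\mathbb{R}$, so the recursion above indeed produces well-defined $\mathcal{S}$-valued Markov chains with the required marginals and the pointwise ordering $\widetilde{X}(n) \leq \widetilde{Y}(n)$.
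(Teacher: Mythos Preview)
Your proposal is correct and follows essentially the same approach as the paper: both construct the coupled chains via the inverse-CDF (Skorokhod) representation driven by a common i.i.d.\ sequence of uniforms, so that the stochastic-domination hypothesis on the one-step kernels propagates the pointwise ordering by induction. The paper's proof is in fact terser than yours --- it simply writes down the recursion $\widetilde{X}(n+1)=\sup\{z\in\mathcal{S}\colon U(n)\geq 1-P(\widetilde{X}(n),\,{\geq}z)\}$ (and similarly for $\widetilde{Y}$) without spelling out the inductive verification or the state-space/measurability points you address.
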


Now our goal is to choose a~$\gamma \in (0,\gamma^{\, \ast}]$ so as to have
\begin{equation}\label{eq:linear_level_linear_time}
\liminf\limits_{R \to \infty} \vec{P} \left\{ \psi_\gamma(\gamma R) \leq
\delta R \right\} \geq 1-\varepsilon.
\end{equation}

To track the moments when the process $\{ V_\gamma(n); \  n \geq 0 \}$ reaches level
$N$ from below and above, we recursively define the hitting times
\begin{align*}
\tau_\gamma^{(0)} & = 0, & \tau^{(i)}_\gamma & = \inf \{ n \geq \nu^{(i-1)}_\gamma \colon V_\gamma(n) \geq N \}, \quad i \geq 1,\\
\nu_\gamma^{(0)} & = 0, & \nu^{(i)}_\gamma & = \inf \{ n \geq \tau^{(i)}_\gamma \colon V_\gamma(n) < N \}, \quad i \geq 1.
\end{align*}
By convention, infimum over the empty set is $\infty$. So, if either $\tau^{(i)}_\gamma=\infty$, or $\nu^{(i)}_\gamma=\infty$, then
$\tau^{(j)}_\gamma=\nu^{(j)}_\gamma=\infty$ for all $j > i$.

Note that the r.v. $\tau^{(1)} := \tau^{(1)}_\gamma$ is a.s. finite and does not depend on $\gamma$ because, for $n \leq \tau^{(1)}$, the process $\{V_\gamma(n); \ n \geq 0\}$ is a~reflected homogeneous random walk with i.i.d. increments $\{ {x(n);} \  {n \geq 1} \}$, which do not depend on $\gamma$. By Assumption~\ref{ass:nontrivial_arrivals}, $\vec{P} \{
x(1) > 0 \} > 0$, then $\vec{E} \tau^{(1)} < \infty$. Further, for any~$i$, if $\nu^{(i-1)}_\gamma$ is finite, then $\tau^{(i)}_\gamma$ is finite, too, and
the difference $\widetilde{\tau}^{(i)}_\gamma:=\tau^{(i)}_\gamma - \nu^{(i-1)}_\gamma$ is stochastically bounded from above by $\tau^{(1)}$.

Let $q^{(i)}_\gamma = \vec{P} \{ \nu^{(i)}_\gamma < \infty \big| \nu^{(i-1)}_\gamma < \infty \}$. Then there exists
a constant $\widetilde{q}<1$ such that,
\begin{equation}\label{eq:nu_probability_bound}
\text{for all } i \text{ and } \gamma \text{ small enough}, \quad q^{(i)}_\gamma \leq
\widetilde{q}.
\end{equation}
Indeed, for all $\gamma \in [0,\gamma^{\, \ast}]$, consider the random walks $Y_\gamma(n):=\sum_{i=1}^n y_\gamma(i)$ (here $y_0(n)$, $n \geq 1$, are defined
by~\eqref{eq:y_after_N} with $\gamma = 0$). By Proposition~\ref{binomial_uniform_bound}, the family $\{ y_\gamma(1); \ 0 \leq \gamma \leq \gamma^{\, \ast} \}$ is
uniformly integrable, and $y_\gamma(1) \Rightarrow y_0(1)$ as $\gamma \to 0$, which, together with $\vec{E} y_0(1) \geq (\upLambda - e^{-1}) / 4 > 0$, implies that
$\inf_{n \geq 0} Y_\gamma(n) \Rightarrow \inf_{n \geq 0} Y_0(n)$ as $\gamma \to 0$ (see~\cite[Chapter~X,
Theorem~6.1]{newAsmussen}). Also  $\vec{E} y_0(1) > 0$ implies $\vec{P} \{ \inf_{n \geq 0} Y_0(n) \} =: p_0 > 0$ (see~\cite[Chapter~VIII, Theorem~
2.4]{newAsmussen}). Then, for all~$i$, we have $\vec{P} \{ \nu^{(i)}_\gamma = \infty \big| \nu^{(i-1)}_\gamma < \infty \} \leq \vec{P} \{ \inf_{n \geq 0}
Y_\gamma(n) \geq 0 \} \to p_0 > 0$, and, for all~$i$ and~$\gamma$ small enough, $q^{(i)}_\gamma \leq
1 - p_0/2 =: \widetilde{q} < 1$.

Let $K_\gamma = \inf \{ i \geq 1 \colon \nu^{(i)}_\gamma =
\infty \}$. By~\eqref{eq:nu_probability_bound}, the $K_\gamma$'s are stochastically bounded from above by a~geometric r.v. uniformly in~$\gamma$ small enough,
\begin{equation} \label{eq:nu_geometric_distribution}
\vec{P}  \{ K_\gamma > i \} \leq \widetilde {q}^{\:i}, \quad i
\geq 0.
\end{equation}

Further, for $i=1,\ldots,K_\gamma$, define the hitting times
$$
\widetilde{\nu}^{(i)}_\gamma = \inf \{ n \geq 0 \colon
\sum\nolimits_{j=1}^{n} {y_\gamma (\tau^{(i)}_\gamma + j )} \geq \gamma R
\}.
$$
Since $\vec{E} y_\gamma(1) > 0$, these r.v.'s are finite. We have $\psi_\gamma(\gamma R) \leq \sum_{i=1}^{K_\gamma} ( \widetilde{\tau}^{(i)}_\gamma +
\widetilde{\nu}^{(i)}_\gamma )$. Indeed, if $\min \{ {i \geq 1 \colon} \nu^{(i)}_\gamma - \tau^{(i)}_\gamma \geq \widetilde{\nu}^{(i)}_\gamma \} = k$, then $k
\leq K_\gamma$ because $\nu_\gamma^{(K_\gamma)}=\infty$, and
\begin{align*}
\psi_\gamma(\gamma R) &\leq \tau^{(k)}_\gamma + \widetilde{\nu}^{(k)}_\gamma = ( \tau^{(1)}_\gamma-\nu^{(0)}_\gamma ) + ( \nu^{(1)}_\gamma - \tau^{(1)}_\gamma )+ \cdots + ( \tau^{(k)}_\gamma - \nu^{(k)}_\gamma ) + \widetilde{\nu}^{(k)}_\gamma\\
&\leq \widetilde{\tau}^{(1)}_\gamma + \widetilde{\nu}^{(1)}_\gamma + \cdots + \widetilde{\tau}^{(k)}_\gamma + \widetilde{\nu}^{(k)}_\gamma \leq
\widetilde{\tau}^{(1)}_\gamma + \widetilde{\nu}^{(1)}_\gamma + \cdots + \widetilde{\tau}^{(K_\gamma)}_\gamma + \widetilde{\nu}^{(K_\gamma)}_\gamma.
\end{align*}

Now we are ready to complete the proof. By~\eqref{eq:nu_geometric_distribution}, there are $k_0$ and $\widetilde{\gamma}$ such that $\vec{P} \{ K_\gamma >
k_0 \} \leq \varepsilon$ for all $\gamma \leq \widetilde{\gamma}$. Put $\delta_0=\delta/(2k_0)$ and $\gamma=\min\{ \gamma^{\, \ast}, \widetilde{\gamma} , \delta_0
(\upLambda - e^{-1}) / 8 \}$. Then
\begin{equation}\label{eq:psi<t+s}
\begin{split}
\vec{P} \{ \psi_\gamma (\gamma R) > \delta R\}
\leq \vec{P} \Bigl\{ \sum\nolimits_{i=1}^\infty (\widetilde{\tau}^{(i)}_\gamma +
\widetilde{\nu}^{(i)}_\gamma ) \vec{I}\{ K_\gamma \geq i \} >\delta R \Bigr\} &\\
\leq \vec{P} \Bigl\{ \sum\nolimits_{i=1}^{k_0} (\widetilde{\tau}^{(i)}_\gamma + \widetilde{\nu}^{(i)}_\gamma ) \vec{I}\{ K_\gamma \geq i \} >\delta R
\Bigr\} + \varepsilon &\\
\leq \sum\nolimits_{i=1}^{k_0} \underbrace{ \vec{P} \{ \widetilde{\tau}^{(i)}_\gamma \vec{I}\{ K_\gamma \geq i \} > \delta_0 R \}
}_{\displaystyle{=:t_i^R}}+ \sum\nolimits_{i=1}^{k_0} \underbrace{ \vec{P} \{ \widetilde{\nu}^{(i)}_\gamma \vec{I}\{ K_\gamma \geq i \} > \delta_0 R \}
}_{\displaystyle{=:s_i^R}} + \varepsilon.
\end{split}
\end{equation} 

Since $\{ K_\gamma \geq i \} \subseteq \{ \nu^{(i-1)} < \infty \}$ and $\tau^{(1)}$ is a.s. finite,
\begin{equation}\label{eq:t}
\begin{split}
t_i^R & \leq \vec{P} \{ \widetilde{\tau}^{(i)}_\gamma \vec{I}\{ \nu^{(i-1)} < \infty \} > \delta_0 R \} \\
&= \vec{P} \{ \widetilde{\tau}^{(i)}_\gamma >
\delta_0 R | \nu^{(i-1)} < \infty \} \vec{P} \{ \nu^{(i-1)} < \infty \} \\
& \leq \vec{P} \{ \tau^{(1)} > \delta_0 R \} \to 0 \quad \text{as } R \to \infty.
\end{split}
\end{equation} 

For $i=1,\ldots,K_\gamma$, we have $\{ \sum_{j=1}^{\lfloor \delta_0 R \rfloor} y_\gamma( \tau_{\gamma}^{(i)} + j ) \geq \gamma R \} \subseteq \{
\widetilde{\nu}_\gamma^{(i)} \leq \delta_0 R \}$. Then
\begin{equation} \label{eq:s}
\begin{split}
s_i^R & = \vec{P} \{ \widetilde{\nu}^{(i)}_\gamma > \delta_0 R | K_\gamma \geq i \} \vec{P} \{ K_\gamma \geq i \} \\
& \leq \vec{P} \{ \sum\nolimits_{j=1}^{\lfloor \delta_0 R \rfloor} y_\gamma( \tau_{\gamma}^{(i)} + j ) < \gamma R | K_\gamma \geq i \} \\
&= \vec{P} \{
Y_\gamma(\lfloor \delta_0 R \rfloor) < \gamma R \} \to 0 \quad \text{as } R \to \infty
\end{split}
\end{equation} 
because a.s. $Y_\gamma(\lfloor \delta_0 R \rfloor) / R \to \delta_0 \vec{E} y_\gamma(1) \geq \delta_0 (\upLambda - e^{-1}) / 4 > \delta_0 \gamma$.

Finally,~\eqref{eq:psi<t+s}, \eqref{eq:t} and~\eqref{eq:s} imply~\eqref{eq:linear_level_linear_time}.

\paragraph{(i) implies~(ii), single class case }

By~(i), we can choose a~$\gamma > 0$ such that, for large $R$, the process $W^R(\cdot)$ reaches level $\gamma R$ in time $\varphi^R(\gamma R) \leq \delta R$
with high probability. Now we prove that, within the time horizon $[ \varphi^R(\gamma R) , \upDelta R]$, there exists a~minorant for $W^R(\cdot)$ that, for large
$R$, stays close to level $\gamma R$ with high probability. Then $W^R(\cdot)$ stays higher than, for example, level $\gamma R / 2$.

We now proceed more formally. Fix $\delta$, $\upDelta$ and $\varepsilon$. Take $h^\ast$ and $N$ the same as in~\eqref{eq:h,N}. Take $\gamma$ and r.v.~$\theta_\gamma$
that satisfy (i)~of~Lemma~\ref{lem:G2_to_zero}, (ii) (with~$a=(\upLambda - e^{-1}) / 4$) of Proposition~\ref{binomial_uniform_bound} and~(iv) of Proposition~\ref{binomial_uniform_bound}. Let $\{
\theta(n); \ n \geq 1 \}$ be an~i.i.d. sequence with $\theta(1) \equal\limits^{\text{d}} \theta_\gamma$, and assume that this sequence does not depend on $\{ A(n); \ n
\geq 1 \}$ and $\{ U(n); \  n \geq 0 \}$. Let $\{ v^R(n); \  n \geq 1 \}$ and $\{ y(n); \ n \geq 1 \}$ be i.i.d. sequences with $v^R(n) = A(n) - \vec{I} \{ U(n-1)
\leq h^\ast \} - \sum_{i=1}^{\lfloor \gamma R \rfloor}\xi^R(n-1,i)$ and $y(n)=A(n) -
\vec{I} \{ U(n-1) \leq h^\ast \} - \theta(n)$. Define the auxiliary processes
$$
V^R(n)=\left\{
\begin{array}{ll}
W^R(n),& n < \varphi^R(\gamma R),\\
\lfloor \gamma R \rfloor,& n = \varphi^R(\gamma R),\\
\min \left\{ \lfloor \gamma R \rfloor, V^R(n-1){+}v^R(n) \right\},&
n > \varphi^R(\gamma R),
\end{array}
\right.
$$
and 
\begin{align*}
\widetilde{V}^R(0) & = 0, & \widetilde{V}^R(n) & =  \min \{ 0, \widetilde{V}^R(n-1)+v^R(n) \}, \quad n \geq 1,\\
Y(0) & = 0, & Y(n) & = \min \{ 0, Y(n-1)+y(n) \}, \quad n \geq 1.
\end{align*}

The processes $W^R(\cdot)$ and $V^R(\cdot)$ coincide within the time interval $[0, \varphi^R(\gamma R) - 1]$. Starting from time $\varphi^R(\gamma R)$, as long
as $V^R(\cdot)$ stays above level $N$, it stays a~minorant for $W^R(\cdot)$. Indeed, for $R$ large enough, given $N \leq V^R(i) \leq
W^R(i)$, $i=\varphi^R( \gamma R), \ldots, n$, if $W^R(n) \geq \lfloor \gamma R \rfloor$, then, by~\eqref{eq:monotonicity},
\begin{align*}
W^R(n+1) &\greaterequal\limits^{\text{a.s.}} \lfloor \gamma R \rfloor + A(n+1)- T\left(n, \lfloor \gamma R
\rfloor \right) - I^R \left(n, \lfloor  \gamma R \rfloor \right)\\
& = \lfloor \gamma R \rfloor + v^R(n+1) \geq
V^R(n+1),
\end{align*}
and, if $W^R(n) < \lfloor \gamma R \rfloor$, then
\begin{align*}
W^R(n+1) &\geq W^R(n) + A(n+1) - \vec{I} \{U(n) \leq
h^\ast \} - \sum\nolimits_{i=1}^{\lfloor \gamma R
\rfloor}\xi^R(n,i) \\
& \geq V^R(n)+ v^R(n+1) \geq V^R(n+1).
\end{align*}

Further, by independence arguments, for $R$ large enough, $y(1) \leq_{\text{st}} v^R(1)$, and
$
\min_{0 \leq i \leq n} Y(i) \leq_{\text{st}} \min_{0 \leq i \leq n} \widetilde{V}^R(i)
$.
Since $\vec{E}y(1)>0$, $\min_{0 \leq i \leq n} Y(i) / n \to 0$ a.s. Hence
\begin{equation}\label{eq:V^R_to_zero}
\min_{0 \leq i \leq
\lfloor \upDelta R \rfloor} \widetilde{V}^R(i) / R \Rightarrow 0 \quad \text{as } R \to \infty.
\end{equation}

Now we are ready to complete the proof. Put $C=\gamma/2$ and define the events
\begin{align*}
E^R &= \{ \min\nolimits_{\lfloor \delta R \rfloor \leq
n \leq \lfloor \upDelta R \rfloor} W^R(n) < R C \}, \displaybreak[0] \\ 
A^R &= \{ \varphi^R(\gamma R) \leq  \delta R \}, \displaybreak[0] \\ 
B^R &= \{ \min\nolimits_{0 \leq i \leq \lfloor \upDelta R \rfloor} V^R ( \varphi^R(\gamma R) + i ) \geq 3 \gamma R / 4 \}.
\end{align*}

Then $\vec{P} \{ E^R \} \leq \vec{P} \{ E^R \cap A^R \cap B^R \} + \vec{P} \{ \overline{A^R} \} + \vec{P} \{ \overline{B^R} \},$ where
\begin{itemize}
\item[$\bullet$] $ E^R \cap A^R \cap B^R \subseteq \{ 3\gamma R / 4 \leq \min_{\lfloor
\delta R \rfloor \leq n \leq \lfloor \upDelta R \rfloor} W^R(n) < \gamma R /2 \} = \varnothing$,
\item[$\bullet$] $\limsup_{R \to \infty} \vec{P} \{
\overline{A^R} \} \leq \varepsilon$,
\item[$\bullet$] by $\{ V^R ( \varphi^R(\gamma R) +n); \ n \geq 0
\}\equal^\text{d} \{ \widetilde{V}^R(n)+\lfloor \gamma R
\rfloor; \ n\geq 0 \}$ and~\eqref{eq:V^R_to_zero}, $$\vec{P} \{
\overline{B^R} \} = \vec{P} \{
\min\nolimits_{0 \leq n\leq \lfloor \upDelta R \rfloor} \widetilde{V}(n) <
3 \gamma R / 4 - \lfloor \gamma R \rfloor \} \to 0 \quad \text{as } R \to \infty.$$
\end{itemize}
Hence, (ii) of Lemma~\eqref{lem:G2_to_zero} holds.

\paragraph{Single class bound for a~multiclass model}

Now we show that a~model with multiple classes of customers can be coupled with a~suitable single class model in such a~way that the workload process of the
single class model is majorised by the total workload of the multiclass model within the whole time horizon $[0,\infty)$. This, in particular, implies that
statements~(i) and~(ii) of the lemma, proven in the single class case, are valid in the multiclass case, too.

For the multiclass model, we slightly modify the
representation of the workload process suggested in Section~\ref{ssec:representation}. We only change the terms that represent departures due to impatience.
For ${x} \in \mathbb{Z}_+^K$, let
$$
I^R_i(n,{x})=\!\!\!\!\!
\sum\limits_{j=x_1 - T_1(n,{x})+ \ldots
+x_{i-1} - T_{i-1}(n,{x})}^{x_1-T_1(n,{x})+ \ldots +
x_i-T_i(n,{x})} \!\!\!\!\! \vec{I} \left\{ U(n,j) \leq
\dfrac{p_i}{R} \right\},
$$
where the r.v.'s $U(n,i)$, $i \geq 1$, $n \geq 0$, are mutually
independent and distributed
uniformly over the interval $[0,1]$. We also assume that the $U(n,i)$'s do not depend on the
random elements ${W}^R_0$, $\left\{{A}(n); \ n \geq 1
\right\}$ and $\left\{U(n); \ n \geq 0\right\}$.

Consider a~single class model with
\begin{itemize}
    \item[$\bullet$] initial condition $\widetilde{W}^R(0)=\left\| {W}^R(0)
    \right\|_1$,
    \item[$\bullet$] arrival process $\widetilde{A}(n)=\left\| {A}(n)
    \right\|_1$,
    \item[$\bullet$] reneging probability
    $\widetilde{p}=\max_{1 \leq i \leq K}p_i$,
\end{itemize}
and define its dynamics as follows:
\begin{equation}
\widetilde{W}^R(n+1) = \widetilde{W}^R(n) + \widetilde{A}(n+1) - \widetilde{T}\left(n,\widetilde{W}^R(n)\right) - \widetilde{I}^{\,R}\left(n, \widetilde{W}^R(n) \right), \label{eq:aux_single_class}
\end{equation}
where, for $k \in \mathbb{Z}_+$,
\begin{align*} \widetilde{T}(n,k) &= \vec{I} \left\{ U(n) \leq h(k) \right\}, \\
\widetilde{I}^{\,R}(n,k) &= \sum\nolimits_{i=1}^{k-\widetilde{T}(n,k)} \vec{I} \left\{ U(n,i) \leq
\dfrac{\widetilde{p}}{R} \right\},
\end{align*}
and the r.v.'s $U(n)$, $U(n,i)$, $n \geq 0$, $i \geq 1$, are those defining the multiclass model.
Then, in particular,
\begin{equation} \label{eq:transmissions=transmissions}
\begin{split}
\left\| {T}(n,{x}) \right\|_1 \equal\limits^{\text{a.s.}} \vec{I} \Bigl\{
U(n) \leq \sum\nolimits_{j=1}^K p_j({x})
\Bigr\}=\widetilde{T}(n,\|{x}\|_1), & \\
\vec{I}\Bigl\{ U(n,j)\leq \dfrac{p_i}{R} \Bigr\} \leq
\vec{I}\Bigl\{ U(n,j)\leq \dfrac{\widetilde{p}}{R} \Bigr\}.&
\end{split}
\end{equation}

We show by induction that $\widetilde{W}^R(\cdot)$ bounds $\| {W}^R(\cdot) \|_1$ from below. Let  $N^R(n) = \| {W}^R(n) \|_1 - T\left(n, \| {W}^R(n)
\|_1\right) $, and suppose that $\widetilde{W}^R(n) \leq \|
{W}^R(n) \|_1$ a.s., then
\begin{align*}
\| {W}^R(n+1) \|_1
&\greaterequal\limits^{\text{a.s.}} \| {W}^R(n)
\|_1 + \widetilde{A}(n+1) - \widetilde{T}\left(n, \| {W}^R(n)
\|_1\right)-\sum_{i=1}^{N^R(n)} \vec{I}\{
U(n,i) \leq \widetilde{p}/R \}\\
&= \| {W}^R(n) \|_1+\widetilde{A}(n+1)-\widetilde{T}\left(n, \|
{W}^R(n) \|_1\right)-\widetilde{I}^{\, R}\left(n, \|
{W}^R(n) \|_1\right)\\
&\greaterequal\limits^{\text{a.s.}} \widetilde{W}^R(n)+\widetilde{A}(n+1)-\widetilde{T}\left(n,
\widetilde{W}^R(n)\right)-\widetilde{I}^{\, R}\left(n, \widetilde{W}^R(n)\right)=\widetilde{W}^R(n+1),
\end{align*}
where the first and last inequalities hold by~\eqref{eq:transmissions=transmissions} and~\eqref{eq:monotonicity} respectively, and the identity is due to
representation~\eqref{eq:aux_single_class}.

\paragraph{(ii) implies~(iii)}

By Remark~\ref{rem:skorokhod}, it is enough to show that, for any $\upDelta>0$, $i=1,\ldots,K$
\begin{equation}\label{eq:G2_to_zero}
\| G_i^{2,R} \|_{[0,\upDelta]} \Rightarrow 0 \quad \text{as } R \to \infty.
\end{equation}
Fix $\upDelta$, $i$. Recall that
\begin{equation} \label{eq:expression_G2}
G_i^{2,R}(t)=e^{-1} \!\! \int\limits_0^t \!\! m_i^R(s) ds - \biggl(1-\dfrac{p_i}{R}\biggr) \!\!\!\!\! \int\limits_0^{\lfloor Rt \rfloor / R} \!\!\!\!\! h^R(s)
m_i^R(s) ds,
\end{equation}
where $m_i^R(s)=m_i \bigl( R\overline{{W}}^R(s) \bigr)$ and $h^R(s)=h\bigl( R \|\overline{{W}}^R(s) \|_1 \bigr)$. First, we estimate the
subtractor in~\eqref{eq:expression_G2}. Since $R \|\overline{{W}}^R(\cdot) \|_1$ is integer-valued and
non-negative, $h^R(\cdot) \leq 1$. Also $m_i^R(\cdot) \leq 1$. Then, for $t \leq \upDelta$, we have
\begin{equation*}
\begin{split}
& \quad \biggl| \int\limits_0^t \!\! h^R(s) m_i^R(s) \, ds - \biggl(1-\dfrac{p_i}{R}\biggr) \!\!\!\!\! \int\limits_0^{\lfloor Rt \rfloor / R} \!\!\!\!\! h^R(s)
m_i^R(s) \, ds \, \biggr| \\
=& \ \int\limits_{\lfloor Rt \rfloor / R}^t \!\!\!\!\! h^R(s) m_i^R(s) \, ds + \dfrac{p_i}{R}\!\!\! \int\limits_0^{\lfloor Rt \rfloor / R} \!\!\!\!\!
h^R(s) m_i^R(s) \, ds \leq \dfrac{1}{R} +
\dfrac{p \upDelta}{R}.
\end{split}
\end{equation*}
Take $\delta < \upDelta$, then
\begin{equation} \label{eq:bound_G2}
\begin{split}
\| G_i^{2,R} \|_{[0,\upDelta]} & \leq  \dfrac{1+p_i
\upDelta}{R} + \sup_{0 \leq t \leq \upDelta} \biggl|  \int\limits_0^t m_i^R(s) (e^{-1}-h^R(s)) \, ds  \, \biggr| \\
& \leq  \dfrac{1+p_i
\upDelta}{R} + (e^{-1} + 1) \delta + \upDelta \sup\limits_{\delta \leq s \leq \upDelta} |e^{-1} - h^R(s)|. 
\end{split}
\end{equation}

Now we show that
\begin{equation} \label{eq:bound_G2_to_zero}
x^R(\delta,\upDelta):= \| e^{-1} - h^R(\cdot) \|_{[\delta,\upDelta]} \Rightarrow 0 \quad \text{as } R \to \infty.
\end{equation}
For any $\sigma> 0$ and $\varepsilon > 0$ and $C(\delta,
\upDelta,\varepsilon)$ satisfying~\eqref{eq:suf_cond_G2_to_zero},
\begin{align*}
\bigl\{ x^R(\delta,\upDelta) \geq \sigma \bigr\} \subseteq & \Bigl\{ x^R(\delta,\upDelta) \geq \sigma,
\inf_{\delta \leq s \leq \upDelta} \|\overline{{W}}^R(s) \|_1 \geq
C(\delta, \upDelta,\varepsilon) \Bigr\}    \\
& \cup    \Bigl\{ \inf_{\delta
\leq s \leq \upDelta} \|\overline{{W}}^R(s) \|_1 < C(\delta,
\upDelta,\varepsilon) \Bigr\} \\
\subseteq& \Bigl\{ \sup_{s \geq R \, C(\delta,
\upDelta,\varepsilon)} | e^{-1} - h(s) | \geq \sigma
\Bigr\} \\
&\cup \Bigl\{ \inf_{\delta \leq s \leq \upDelta}
\|\overline{{W}}^R(s) \|_1 < C(\delta, \upDelta, \varepsilon) \Bigr\}.
\end{align*}
Here the first event in the very RHS is empty for $R$ large enough, and hence
$$
\limsup\limits_{R \to
\infty} \vec{P} \bigl\{ x^R(\delta,\upDelta) \geq \sigma \bigr\}
\leq \limsup\limits_{R \to \infty} \vec{P} \Bigl\{ \inf_{\delta
\leq s \leq \upDelta} \|\overline{{W}}^R(s) \|_1 < C(\delta, \upDelta,
\varepsilon) \Bigr\} \leq \varepsilon.
$$
Since in the last inequality $\varepsilon > 0$ is arbitrary, for any $\sigma > 0$, as $R \to \infty$, we have $\vec{P} \{ x^R(\delta,\upDelta) \geq \sigma \} \to 0$ , which is~\eqref{eq:bound_G2_to_zero}.

Finally,~\eqref{eq:bound_G2} and~\eqref{eq:bound_G2_to_zero} imply~\eqref{eq:G2_to_zero}.

\subsection{Proof of Lemma~\ref{lem:G3_martingale_to_zero}} \label{ssec:proof_lemma4}

We prove the result in the single class case. The same proof is valid for each coordinate in the~multiclass case.

\paragraph{Convergence of $G^{3,R}(\cdot)$}

By Remark~\ref{rem:skorokhod}, it suffices to show that, for any $T > 0$, 
\begin{equation}\label{eq:G3_to_zero}
\mu^R(T):=\sup_{0 \leq t \leq T}
\int_{\lfloor Rt \rfloor / R}^t \!\!\!\! \overline{W}^R(s) \, ds
\Rightarrow 0  \quad \text{as }R \to \infty.
\end{equation}
Since $\overline{W}^R(\cdot)$ is a~constant function within the
interval $\bigl[\lfloor Rt \rfloor/R , t \bigr]$, we have 
$$
\mu^R(T) = \sup_{t \leq T} \overline{W}^R(t) \bigl( Rt
 - \lfloor Rt \rfloor \bigr) / R \leq \sup_{0 \leq t \leq T}
 \overline{W}^R(t) / R = W^R(0)/R^2+ \sum_{i=1}^{\lfloor TR \rfloor} A(i) / R^2,$$
which implies~\eqref{eq:G3_to_zero}.

\paragraph{Convergence of $\overline{M}^R(\cdot)$}

We represent the martingale $\{ M^R(n); n \geq 0 \}$ as a~sum of three other zero-mean martingales,
\begin{align*}
M^R(n) & = S^R(n) - I^R(n) - T^R(n), \displaybreak[0] \\
S^R(n) & = \sum_{i=1}^n A(i) - n\upLambda, \displaybreak[0] \\
I^R(n) &= \sum\limits_{i=0}^{n-1}\sum\limits_{m=1}^{W^R(i)} \Bigl( \xi^R(i,m) - \dfrac{p}{R} \Bigr),  \displaybreak[0] \\
T^R(n) & = \sum\limits_{i=0}^{n-1}\Bigl( T\left(i,W^R(i)\right) - h\left( W^R(i) \right) \Bigr) + \sum\limits_{i=0}^{n-1}
\!\!\! \sum\limits_{\substack{m=W^R(i) \\ - T\left(i,W^R(i)\right)+1}}^{W^R(i)} \!\!\!\!\!\!\!\! \Bigl(  \xi^R(i,m)-\dfrac{p}{R}h\!\left(W^R(i)\right)
\Bigr).
\end{align*}
It suffices to show that, for $N \in \mathbb{Z}_+$, $N \to \infty$,
\begin{equation} \label{eq:three_in_one}
\max_{1 \leq n \leq N} \dfrac{|X^N(n)|}{N} \Rightarrow 0, \quad X=S,T,I.
\end{equation}

For $X=S$,~\eqref{eq:three_in_one} follows from the functional law of large numbers.

For all $n$ and $N$, we have $| T^N(n+1) - T^N(n) | \leq 4$. Then we get~\eqref{eq:three_in_one} with $X=T$ by applying Doob's inequality for
non-negative submartingales and the following known result (see e.g.~\cite[Chapter~VII, Paragraph~9, Theorem~3]{Feller2}).
\begin{proposition}
Let $\{ X(n); n \geq 1 \}$ be a~sequence of r.v.'s such that, for all $n$, $$\vec{E}\{X(n)|X(1),\ldots,X(n-1)\}=0.$$ Suppose that $b(1) < b(2) < ... \to \infty$ and that
$\sum_{k=1}^\infty b(k)^{-2} \vec{E} X(k)^2 < \infty$. Then a.s. ${b(n)^{-1} \sum_{k=1}^n X(k) \to 0}$.
\end{proposition}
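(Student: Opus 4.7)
The plan is to deduce the result from two classical ingredients: an $L^2$ martingale convergence argument followed by Kronecker's lemma. First I would introduce $Y(k) := X(k)/b(k)$ and $M_n := \sum_{k=1}^n Y(k)$. The hypothesis $\vec{E}\{X(k) \mid X(1),\ldots,X(k-1)\} = 0$ combined with the finiteness of each $\vec{E} X(k)^2$ (implicit in the summability condition) makes $\{M_n\}$ a square-integrable martingale with respect to the natural filtration generated by $\{X(k); k \geq 1\}$. Since the martingale increments $Y(k)$ are orthogonal in $L^2$,
$$
\vec{E} M_n^2 = \sum_{k=1}^n b(k)^{-2} \vec{E} X(k)^2 \leq \sum_{k=1}^\infty b(k)^{-2} \vec{E} X(k)^2 < \infty,
$$
so $\{M_n\}$ is bounded in $L^2$. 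Doob's martingale convergence theorem then yields a finite a.s. limit, i.e. the series $\sum_{k=1}^\infty X(k)/b(k)$ converges on an event of full measure.

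Second, I would apply Kronecker's lemma pathwise on that full-measure event: whenever $b(n) \uparrow \infty$ and the scalar series $\sum_k y_k$ converges, one has $b(n)^{-1} \sum_{k=1}^n b(k) y_k \to 0$. Taking $y_k = X(k)/b(k)$ gives $b(n)^{-1} \sum_{k=1}^n X(k) \to 0$ almost surely, which is exactly the conclusion.

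There is no substantive obstacle here; the argument is essentially a textbook assembly. The only point requiring minor care is to observe that $\sum_{k \geq 1} b(k)^{-2} \vec{E} X(k)^2 < \infty$ forces each $X(k)$ to lie in $L^2 \subset L^1$, so that the conditional expectation $\vec{E}\{X(k) \mid X(1),\ldots,X(k-1)\}$ is well defined and the $L^2$ martingale machinery applies legitimately. This is precisely the form in which the result appears in Feller, and the paper cites it rather than proving it.
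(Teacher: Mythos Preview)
Your proposal is correct and is precisely the standard argument (martingale convergence in $L^2$ followed by Kronecker's lemma) that underlies the cited result in Feller. The paper does not supply its own proof of this proposition at all: it merely states it as a known fact with the reference \cite[Chapter~VII, Paragraph~9, Theorem~3]{Feller2}, so there is nothing further to compare.
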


Now we prove~\eqref{eq:three_in_one} for $X=I$. The key tool of this proof is Markov's inequality. We have to show that, for any $\varepsilon > 0$, as $N \to \infty$,
\begin{subequations} \label{eq:min_max}
\begin{align}
\vec{P} \{ \max_{1 \leq n \leq N} I^N(n) / N > \varepsilon\} & \to 0, \label{eq:max}\\
\vec{P} \{ \min_{1 \leq n \leq N} I^N(n) / N < -\varepsilon\} & \to 0. \label{eq:min}
\end{align}
\end{subequations}
By Taylor's expansion, there exists an~$\alpha^\ast > 0$ such that, for any $\alpha \in [0,\alpha^\ast]$, $\rho \in[0,1]$ and r.v.~$\xi(\rho)$ with
$\vec{P}\{ \xi(\rho)=1 \} = \rho = 1 - \vec{P}\{ \xi(\rho)=0 \}$,
\begin{equation} \label{eq:bernoulli_exp_bound}
\vec{E}e^{\alpha(\xi(\rho) - \rho)} \leq e^{\alpha^2 \rho}, \quad \text{and} \quad \vec{E}e^{\alpha(\rho - \xi(\rho) )} \leq e^{\alpha^2 \rho}.
\end{equation}
Since $W^N(n) \leq W^N(0)+\sum_{i=1}^N A(i)$, $n \leq N$, and $W^N(0) / N \Rightarrow z^0$, and $\sum_{i=1}^N A(i) / N \to \upLambda$ a.s., for any $\delta > 0$, there exists an
$M(\delta) \in \mathbb{Z}_+$ such that
\begin{equation} \label{eq:workload_bound_arrivals}
\limsup_{N \to \infty} \vec{P} \{ \max_{0 \leq n \leq N}W^N(n) > M(\delta) N \} \leq \delta.
\end{equation}
Denote the event in~\eqref{eq:workload_bound_arrivals} by $E^N(\delta)$ and put
\begin{equation} \label{eq:alpha_bernoulli}
\alpha(\delta) = \min\{ \alpha^\ast, \varepsilon /(2 M(\delta)) \}.
\end{equation}
We introduce the auxiliary martingale
$$
I^{N,\delta}(n) = \sum\limits_{i=0}^{n-1} \sum\limits_{m=1}^{W^{N,\delta}(i)} \Bigl( \xi^N(i,m) - \dfrac{p}{N} \Bigr), \quad n \geq 1,
$$
where $W^{N,\delta}(i)=\max\{W^N(i),M(\delta) N\}$. Note that, on $\overline{E^N(\delta)}$, we have $I^{N,\delta}(n) = I^N(n)$, $n=0, \ldots, N$. Hence,
\begin{equation} \label{eq:many_t_delta}
\begin{split}
\vec{P} \bigl\{ \max_{1 \leq n \leq N} I^N(n) / N > \varepsilon \bigr\} & \leq \vec{P} \bigl\{ \max_{1 \leq n \leq N} I^{N,\delta}(n) > \varepsilon N
\bigr\} + \vec{P} \bigl\{ E^N(\delta) \bigr\} \\
& \leq \sum\nolimits_{n=1}^N \vec{P} \bigl\{  I^{N,\delta}(n) > \varepsilon N \bigr\} + \vec{P} \bigl\{ E^N(\delta) \bigr\}.
\end{split}
\end{equation}
By Markov's inequality, \eqref{eq:alpha_bernoulli} and \eqref{eq:bernoulli_exp_bound}, for $n \leq N$,
\begin{equation} \label{eq:t_delta}
\begin{split}
\exp(\alpha(\delta) \varepsilon N) \vec{P} \bigl\{  I^{N,\delta}(n) > \varepsilon N \bigr\} \leq \vec{E} \exp( \alpha(\delta) I^{N,\delta}(n) )& \\
= \vec{E} \biggl[ \vec{E} \biggl[ \prod_{i=0}^{n-1} \prod_{m=1}^{W^{N,\delta}(i)} \exp \Bigl( \alpha(\delta) ( \xi^N(i,m) - p / N ) \Bigr) \bigg| W^N(0),\ldots, W^N(n-1)
\biggr] \biggr] &\\
\leq \vec{E} \biggl[ \exp \Bigl(  \alpha^2(\delta) \dfrac{p}{N} M(\delta) N n \Bigr) \biggr] \leq \exp \bigl( \alpha^2(\delta) M(\delta) N \bigr) &.
\end{split}
\end{equation}
By~\eqref{eq:workload_bound_arrivals} and~\eqref{eq:alpha_bernoulli}, bounds~\eqref{eq:many_t_delta} and~\eqref{eq:t_delta} imply that
$$
\limsup_{N \to \infty} \vec{P} \{ \max_{1 \leq n \leq N} I^N(n) / N > \varepsilon\} \leq \delta,
$$
where $\delta>0$ is arbitrary. Hence, convergence~\eqref{eq:max} holds.
Convergence~\eqref{eq:min} can be treated similarly.

\section*{Appendix}
\appendix
\textit{Proof of Proposition~\ref{binomial_uniform_bound}.} There exists an~$R^\ast>0$ such that, for all $n \in \mathbb{Z}_+$, $\gamma \in (0,1]$ and $R \geq
R^\ast$,
$$
e^n\vec{P} \{B^R_\gamma \geq n \} \leq \vec{E} \exp(B^R_\gamma )=(ep/R+1-p/R)^{\lfloor \gamma R\rfloor} \leq \exp((e-1)p)=:\mu.
$$
Take $N^\ast$ such that $\sum_{n > N^\ast} \mu n e^{-n} \leq a/2$, and $\gamma^\ast = \min \{ 1, a/(4p) \}$. Fix $\gamma \in (0,\gamma^\ast]$. Since, as $R \to \infty$, $B(\lfloor
\gamma R \rfloor, p/R) \Rightarrow \upPi(\gamma p)$, there exists an~$R_\gamma \geq R^\ast$ such that $$\vec{P} \{ B^R_\gamma = n \} \leq 2
\upPi(\gamma R)(\{n\}), \quad \text{for } R \geq R_\gamma \text{ and } n=0, \ldots N^\ast. $$
For $\gamma \in (0,\gamma^\ast]$, put
\begin{align} \label{eq:after_N}
\vec{P} \{ \theta_\gamma \geq n \} &= \mu e^{-n}, \quad n > N^\ast, \\
\vec{P} \{ \theta_\gamma = n \} &= \min \bigl\{ 2 \upPi(\gamma R)(\{n\}), 1 - \vec{P} \{ \theta_\gamma \geq n +1 \}\bigr\}, \quad n=N^\ast,\ldots,0.
\nonumber
\end{align}
For $\gamma=0$, put~\eqref{eq:after_N} and 
\begin{align*}
\vec{P} \{ \theta_0 \geq 0 \} &= 1 - \mu e^{-N^\ast-1},\\
\vec{P} \{ \theta_0 = n \} &= 0, \quad n=1,\ldots,N. \tag*{\qed}
\end{align*}

\textit{Proof of Proposition~\ref{coupling_markov_chains}.} Define
\begin{align*}
P(x,\ {\geq} z) &= \vec{P} \left\{ X(n+1) \geq z \big| X(n)=x\right\},\\
Q(y,\ {\geq} z) &= \vec{P} \left\{ Y(n+1) \geq z \big|
Y(n)=y\right\}.
\end{align*}
Let $\left\{U(n); \ n \geq 0\right\}$ be an~i.i.d. sequence with
$U(0)$ distributed uniformly over $[0,1]$. Then put
\begin{align*}
\widetilde{X}(0)  & =  X(0), & \widetilde{X}(n+1) &= \sup \{ z \in \mathcal{S}
\colon U(n) \geq 1-P(\widetilde{X}(n),\ {\geq} z) \},\\
 \widetilde{Y}(0) & = Y(0), & \widetilde{Y}(n+1) &= \sup \{ z \in \mathcal{S}
\colon U(n) \geq 1-P(\widetilde{Y}(n),\ {\geq} z) \}. \tag*{\qed}
\end{align*}


\begin{thebibliography}{99}

\bibitem{Abramson} Abramson, N.: The ALOHA system --- another
alternative for computer communications. AFIPS Conference
Proceedings {\bf 36}, 295--298 (1970)

\bibitem{newAsmussen} Asmussen, S.: Applied Probability and Queues. Springer, New York (2003)

\bibitem{BFS} Bordenave, Ch., Foss, S., Shneer, S.: A~random multiple access protocol with spatial interactions.
J.~Appl. Probab. {\bf 46}, 844--865 (2009)

\bibitem{bez} Borst, S., Egorova, R., Zwart, B.: Fluid limits for
bandwidth-sharing networks in overload. Preprint (2009)

\bibitem{survey} Ephremides, A., Hajek, B.: Information theory and
communication networks: an~unconsummated union. IEEE Trans. Inform. Theory~{\bf 44}, 2416--2434 (1998)

\bibitem{EthierKurtz} Ethier, St. N., Kurtz, Th. G.: Markov Processes: Characterization and Convergence. John Wiley \& Sons, New York (1986)

\bibitem{Feller2} Feller, W.: An~Introduction to Probability Theory and Its Applications 2. John Wiley~\& Sons, New York (1966)

\bibitem{Hajek} Hajek, B.: Hitting-time and occupation-time bounds implied
by drift analysis with applications. Adv. Appl. Probab.~{\bf 14}, 502--525 (1982)

\bibitem{Hartman} Hartman, Ph.: Ordinary Differential
Equations. Birkh\"{a}user, Boston (1982)

\bibitem{Mikhailov} Mikhailov, V.: Geometric analysis of stability of Markov
chains in $R^n$ and its applications to throughput evaluation of
the adaptive random multiple access algorithm. Probl. Inform. Transm.~{\bf 24}, 47--56 (1988)

\end{thebibliography}
\end{document}